\newtheorem{theorem}{\bf Theorem}
\newtheorem{lem}{\bf Lemma}[section]
\newtheorem{cor}{\bf Corollary}
\newtheorem{remark}{\rm REMARK}[section]
\newtheorem{prop}{\bf Proposition}[section]
\title{{\huge On the weak convergence of conditioned Bessel bridges}}
\author{{\large Kensuke Ishitani, Tokufuku Rin and Shun Yanashima}}
\date{}
\begin{document}
\maketitle

\begin{abstract}
The purpose of this paper is to introduce the construction of a 
stochastic process called ``$\delta$-dimensional Bessel house-moving" and its properties. 
We study the weak convergence of 
$\delta$-dimensional Bessel bridges conditioned from above, 
and we refer to this limit as $\delta$-dimensional Bessel house-moving. 
Applying this weak convergence result, 
we give the decomposition formula for its distribution 
and the Radon-Nikodym density for the distribution of the Bessel house-moving with respect to the one of the Bessel process. 
We also prove that $\delta$-dimensional Bessel house-moving is a 
$\delta$-dimensional Bessel process hitting a fixed point for the first time at $t=1$. 

\bigskip
\noindent {\bf Keywords:}
barrier options, Greeks, Bessel bridge, Bessel house-moving
\footnote[0]{2010 Mathematics Subject Classification: 
Primary 60F17; Secondary 60J25.}
\end{abstract}


\section{Introduction and main results}\label{section_intro}

The purpose of this paper is to introduce the construction of a 
stochastic process called ``$\delta$-dimensional Bessel house-moving" and its properties. 

Recently, \cite{bib_Ishitani} developed 
a chain rule for Wiener path integrals between two curves 
that arise in the computation of first-order Greeks of barrier options, 
demonstrating its effectiveness with some numerical examples. 
In this chain rule, a $3$-dimensional Bessel bridge and a Brownian meander 
played an important role. 
We believe higher-order chain rules 
might be useful in computing higher-order Greeks of barrier options, 
in which the stochastic process 
``Brownian house-moving'' is expected to play an important role.

The Brownian house-moving is a Brownian bridge 
that stays between its starting point and its terminal point. 
In \cite{bib_3DimMoving},  
it has been proven that the Brownian house-moving can also be obtained 
by the weak convergence of $3$-dimensional Bessel bridges conditioned from above. 
In \cite{bib_Ishitani_2}, a Monte Carlo sampling technique for Brownian house-moving is studied. 
Although the existence of the Brownian house-moving was well-known in \cite{bib_PitmanYor1996, bib_Yano}, 
the weak convergence result for conditioned $3$-dimensional Bessel bridges had yet to be researched, 
and this result is necessary for computing higher-order Greeks of barrier options under the Black-Scholes market model.

Furthermore, to compute higher-order Greeks of barrier options under the general market model, 
we need more general results for the weak convergence of conditioned diffusion bridges. 
As a preparatory step toward this goal, in this paper we focus on the weak convergence of 
conditioned $\delta$-dimensional Bessel bridges. 
We study the weak convergence of 
$\delta$-dimensional Bessel bridges conditioned from above for all $\delta>0$ (Theorem~\ref{Thm_MainResult2}), 
and we refer to this limit as ``$\delta$-dimensional Bessel house-moving". 

Since the Brownian house-moving corresponds to the $3$-dimensional Bessel house-moving, 
our results expand on those in \cite{bib_3DimMoving}. 
In \cite{bib_3DimMoving}, to prove their result, they use a relation 
between the one-dimensional Brownian bridge and the $3$-dimensional Bessel bridge. 
However, we are not able to apply the same approach as \cite{bib_3DimMoving} for $\delta>0$ 
because a $\delta$-dimensional Bessel process 
is abstractly defined as a solution of a stochastic differential equation. 
For this reason, we prove our result with different approaches, such as by 
using estimations related to the Fourier-Bessel expansion (\cite{bib_FourierBessel}, \cite{bib_Serafin}). 

We also prove that a $\delta$-dimensional Bessel house-moving 
is a $\delta$-dimensional Bessel process hitting a fixed point for the first time at $t=1$ (Proposition~\ref{Prop_MainResult1}). 
As mentioned above, 
the first hitting process for one-dimensional diffusion processes already appeared in \cite{bib_PitmanYor1996, bib_Yano}. 
However, since we also construct the Bessel house-moving as the weak limit of conditioned Bessel bridges, 
we can obtain new results on the sample path properties of the Bessel house-moving. 
For example, applying our weak convergence result, 
we can obtain the decomposition formula for its distribution (Theorem~\ref{Thm_Decomp_flat_Moving}) 
and the Radon-Nikodym density for the distribution of the Bessel house-moving 
with respect to one of the Bessel processes (Theorem~\ref{Thm_abs_conti}). 


\subsection{Notations}
We start by introducing notations needed for stating our results. 

Throughout this paper, we fix $\delta>0$ and set $\nu:=\delta/2-1$.

For $0\leq a<b$, let $R^a=\{R^a(t)\}_{t\geq 0}$ be a $\delta$-dimensional Bessel process (BES($\delta$) process for short) starting from $a$. 
In addition, for $0\leq t_1<t_2\leq 1$, 
$r_{[t_1, t_2]}^{a\to b}=\{r_{[t_1, t_2]}^{a\to b}(t)\}_{t\in [t_1, t_2]}$ 
denotes a $\delta$-dimensional Bessel bridge (BES($\delta$) bridge for short) from $a$ to $b$ on $[t_1, t_2]$. 
We write simply 
$r^{a\to b}:=r_{[0, 1]}^{a\to b}$. 

For a continuous process $X$ on $[t_1, t_2]$, 
we denote its maximal value as 
\begin{align*}
&M_{[t_1,t_2]}(X) = \max_{t_1 \leq u \leq t_2}X(u).
\end{align*}
In the case that $[t_1, t_2]=[0,t]$, we write $M_t(X):=M_{[0,t]}(X)$. 
Moreover, in the case that $[t_1, t_2]=[0,1]$, we write simply $M(X):=M_{[0,1]}(X)$.

For $\eta> 0$, $0\leq s<t\leq 1$, and $x, y\in [0, \eta]$, we define
\begin{align*}
q^{(\eta)}_1(s, x, t, y)
:=&\frac{P\left(R^x(t-s)\in dy\right)}{dy}
P\left(M_{[s,t]}(r_{[s,t]}^{x\to y})\leq \eta\right),\\
q^{(\eta)}_2(t, y)
:=&\lim_{\varepsilon \downarrow 0}
\frac{\partial}{\partial \varepsilon}q^{(\eta+\varepsilon)}_1(0, y, t, \eta)
=\frac{P\left(R^y(t)\in d\eta\right)}{d\eta}
\lim_{\varepsilon \downarrow 0}
\frac{\partial}{\partial \varepsilon}P\left(M_{[0, t]}(r_{[0, t]}^{y\to \eta})\leq \eta+\varepsilon \right).
\end{align*}

Let $C([t_1,t_2], \mathbb{R})$ 
be a class of $\mathbb{R}$-valued continuous functions 
defined on $[t_1,t_2]$ and set 
\[
K_{[t_1, t_2]}^-(c)
:=\{w\in C([t_1, t_2], \mathbb{R}) ~|~w(t)\leq c, \ t_1\leq t\leq t_2\}\]
for $c\in\mathbb{R}$. 
In the case that $[t_1, t_2]=[0,1]$, we write simply $K^-(c):=K_{[0,1]}^-(c)$.
Let 
\[
d_{\infty}(w,w') = \| w - w' \|_{C([t_1,t_2], \mathbb{R})} := \sup_{t_1 \leq t \leq t_2} \left| w(t) - w'(t) \right|
\quad (w, w' \in C([t_1,t_2], \mathbb{R})).
\] 
$\mathcal{B}(C([t_1,t_2], \mathbb{R}))$ denotes the Borel $\sigma$-algebra 
with respect to the topology generated by the metric $d_{\infty}$. 
In addition, for $0 \leq s < t \leq 1$, 
$\pi_{[s, t]} : C([0,1], \mathbb{R}) \to C([s,t], \mathbb{R})$ 
denotes the restriction map.

Assume that 
$Y : (\Omega, \mathcal{F}, P) \to 
(C([0,1], \mathbb{R}), \mathcal{B}(C([0,1], \mathbb{R})))$ 
is a random variable and that
$\Lambda \in \mathcal{B}(C([0,1], \mathbb{R}))$ 
satisfies $P(Y \in \Lambda) > 0$. 
Then, we define the probability measure 
$P_{Y^{-1}(\Lambda)}$ on 
$(Y^{-1}(\Lambda), Y^{-1}(\Lambda) \cap \mathcal{F})$ as
\begin{align*}
P_{Y^{-1}(\Lambda)}(A):= \frac{P(A)}{P(Y \in \Lambda)}, 
\qquad A \in Y^{-1}(\Lambda) \cap \mathcal{F}
:= \left\{ Y^{-1}(\Lambda) \cap F~|~F \in \mathcal{F} \right\}.
\end{align*}
Let $Y|_{\Lambda}$ denote the restriction $Y$ to 
$(Y^{-1}(\Lambda), Y^{-1}(\Lambda) \cap \mathcal{F}, P_{Y^{-1}(\Lambda)})$. 
Then, 
\begin{align}\label{Def_conditional_StochProc}
Y|_{\Lambda} : (Y^{-1}(\Lambda), Y^{-1}(\Lambda) \cap \mathcal{F}, P_{Y^{-1}(\Lambda)}) \to 
(\Lambda, \mathcal{B}(\Lambda))
\end{align}
is a random variable. 
Throughout this paper, 
$P_{Y^{-1}(\Lambda)}(Y|_{\Lambda} \in \Gamma)$ is often written as 
$P(Y|_{\Lambda} \in \Gamma)$.

In addition, $X_n \overset{\mathcal{D}}{\to} X$ means that $\{ X_n \}_{n=1}^{\infty}$ converges to $X$ in distribution.

\subsection{Main results}
First, we construct a stochastic process called ``$\delta$-dimensional Bessel house-moving'' (BES($\delta$) house-moving for short) $H^{a \to b}$ as the weak limit of BES($\delta$) bridges conditioned from above. 
\begin{theorem}\label{Thm_MainResult2}
Let $0\leq a<b$. 
There exists an $\mathbb{R}$-valued continuous Markov process 
$H^{a \to b}=\{H^{a \to b}(t) \}_{t \in [0,1]}$ that satisfies
\begin{align*}
r^{a \to b}|_{K^-(b+\eta)} \xrightarrow{\mathcal{D}} H^{a \to b}, 
\qquad \eta \downarrow 0.
\end{align*}
Moreover, for $0<s<t<1$ and $x, y\in (0, b)$, the law of $H^{a\to b}$ is given by
\begin{align*}
P\left(H^{a\to b}(t)\in dy\right)
&=\frac{q^{(b)}_1(0, a, t, y)q^{(b)}_2(1-t, y)}{q^{(b)}_2(1, a)}dy,\\
P\left(H^{a\to b}(t)\in dy~|~H^{a\to b}(s)=x\right)
&=\frac{q^{(b)}_1(s, x, t, y)q^{(b)}_2(1-t, y)}{q^{(b)}_2(1-s, x)}dy.
\end{align*}
\end{theorem}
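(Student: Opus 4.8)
The plan is to prove existence of the limit process and identify its finite-dimensional distributions (which here reduces to the one- and two-point laws, since Markovianity is asserted) in essentially two stages. First I would establish weak convergence on $C([0,1],\mathbb{R})$ as $\eta\downarrow 0$; second I would compute the limiting transition and marginal densities and verify they take the claimed form involving $q^{(b)}_1$ and $q^{(b)}_2$.

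For the convergence, I would use the standard criterion of tightness together with convergence of finite-dimensional distributions. Tightness of the family $\{r^{a\to b}|_{K^-(b+\eta)}\}_{\eta>0}$ I would try to obtain from uniform modulus-of-continuity estimates for BES($\delta$) bridges, transferred to the conditioned laws; the conditioning event $K^-(b+\eta)$ has probability bounded below for small $\eta$ after appropriate normalization, so estimates on the unconditioned bridge (e.g.\ Kolmogorov-type moment bounds on increments) should survive the conditioning up to controllable constants. For the finite-dimensional distributions, fix $0<t_1<\dots<t_k<1$ and compute, using the Markov property of the BES($\delta$) bridge and the definition of $q^{(\eta)}_1$, the joint density of $(r^{a\to b}(t_1),\dots,r^{a\to b}(t_k))$ under the conditioned measure $P(\,\cdot\mid M(r^{a\to b})\le b+\eta)$. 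This joint density factors into a product of $q^{(b+\eta)}_1$-type kernels over the successive intervals, divided by the normalizing constant $P(M(r^{a\to b})\le b+\eta)$. The key analytic step is then to take $\eta\downarrow 0$: the last interval $[t_k,1]$ carries the factor measuring the probability that the bridge stays below $b+\eta$ while ending at $b$, and after differentiating in $\eta$ (the operation built into the definition of $q^{(\eta)}_2$) this factor converges to $q^{(b)}_2(1-t_k, y_k)$, while the normalizer converges to $q^{(b)}_2(1,a)$. I would carry out this computation first for $k=1$ to get the marginal law, then for the two successive intervals to read off the transition law, and the general $k$-point case follows by the same factorization.

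The main obstacle I expect is the limit $\eta\downarrow 0$ in the factor $P(M_{[t_k,1]}(r^{a\to b}_{[t_k,1]})\le b+\eta)$ together with the normalizing constant, since both tend to $0$ (the bridge ends exactly at $b$, the conditioning barrier) and one must show the ratio converges to a finite nontrivial limit after the derivative-in-$\eta$ normalization encoded in $q^{(\eta)}_2$. This is precisely where the delicate estimates are needed: one must justify differentiating the barrier-crossing probability in $\varepsilon$ and show the limit $\lim_{\varepsilon\downarrow 0}\frac{\partial}{\partial\varepsilon}P(M_{[0,t]}(r^{y\to\eta}_{[0,t]})\le\eta+\varepsilon)$ exists and is continuous in the relevant variables. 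I anticipate relying on the Fourier--Bessel expansion estimates referenced in the introduction (\cite{bib_FourierBessel}, \cite{bib_Serafin}) to obtain explicit control on these maximum distributions and their $\varepsilon$-derivatives uniformly enough to interchange limits.

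Once the finite-dimensional distributions converge to the stated densities and tightness is in hand, weak convergence to a process $H^{a\to b}$ with those marginals follows. The product structure of the $k$-point density over successive intervals simultaneously exhibits the limit as Markov and yields the stated transition density $P(H^{a\to b}(t)\in dy\mid H^{a\to b}(s)=x)=q^{(b)}_1(s,x,t,y)q^{(b)}_2(1-t,y)/q^{(b)}_2(1-s,x)\,dy$, with the one-point law obtained by specializing $s=0$, $x=a$ and using $q^{(b)}_1(0,a,t,y)$ in the numerator against the normalizer $q^{(b)}_2(1,a)$; continuity of the limit paths is inherited from tightness.
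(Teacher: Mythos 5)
Your overall strategy is the same as the paper's: factor the finite-dimensional densities of the conditioned bridge into products of $q^{(b+\eta)}_1$-kernels via the Markov property, pass to the limit $\eta\downarrow 0$ by an l'H\^opital-type argument that turns the vanishing factors $P(M_{[t_k,1]}(r^{y_k\to b}_{[t_k,1]})\le b+\eta)$ and $P(M(r^{a\to b})\le b+\eta)$ into the derivatives $q^{(b)}_2(1-t_k,y_k)$ and $q^{(b)}_2(1,a)$, justify this with Fourier--Bessel estimates, and combine with tightness. You also correctly locate the delicate analytic point (term-by-term differentiation of the maximum distribution in the barrier parameter). Two small things you leave implicit but which the paper must check explicitly: that the limiting density actually integrates to $1$ (no mass is lost in the limit; the paper's Proposition~\ref{Prop_int_h}, proved by dominated convergence using the kernel bounds) and that it satisfies Chapman--Kolmogorov (Proposition~\ref{Prop_ChapmanKolmogorov}); these are what let one invoke the Markov-bridge convergence lemma.

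The genuine gap is in your tightness argument. You say the conditioning ``has probability bounded below for small $\eta$ after appropriate normalization,'' so that moment bounds ``survive the conditioning up to controllable constants.'' The normalizer $P(M(r^{a\to b})\le b+\eta)$ is of order $\eta$, so dividing by it costs a factor $\eta^{-1}$; this is only compensated because the constraint near $t=1$ contributes a matching factor of order $1\wedge\frac{2\eta(b+\eta)}{1-t}$ (Lemma~\ref{Lem_Esti_q_first}). After cancelling the $\eta$'s, the resulting increment bounds are \emph{not} uniform on $[0,1]$: they degenerate like $s^{-\nu-3/2}(1-t)^{-\nu-5/2}$ as $s\downarrow 0$ or $t\uparrow 1$ (Lemma~\ref{Lem_MomentEq}). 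Consequently a direct Kolmogorov--Chentsov tightness criterion on all of $[0,1]$ fails, and your plan as stated would break at exactly this step. The paper repairs this by splitting tightness into two pieces: tightness of the restrictions to $[u,1-u]$ for each $u$ (Corollary~\ref{Cor_restricted_tight}), plus uniform-in-$\eta$ control of the oscillation near $t=0$ and $t=1$ obtained from a dyadic Borel--Cantelli argument with a carefully chosen moment exponent, exploiting that the increment bounds near the endpoints carry extra positive powers of $|t-s|$ (Proposition~\ref{Prop_Limit_LeftRight} and Theorem~\ref{Ap_Thm_suf_cond_restricted_tightness}). Some such endpoint argument is unavoidable here and is missing from your proposal.
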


Applying Theorem~\ref{Thm_MainResult2}, 
we can prove the decomposition formula for the distribution of the BES($\delta$) house-moving (Theorem~\ref{Thm_Decomp_flat_Moving}).
Let $t \in (t_1, t_2)$. 
For $w_1 \in C([t_1, t], \mathbb{R})$ and $w_2 \in C([t, t_2], \mathbb{R})$ 
that satisfy $w_1(t) = w_2(t)$, 
we define $w_1 \oplus_t w_2 \in C([t_1,t_2], \mathbb{R})$ by
\begin{align*}
(w_1 \oplus_t w_2)(s) :=
\begin{cases}
 w_1(s), \qquad s\in [t_1, t], \\
 w_2(s), \qquad s\in [t, t_2].
\end{cases}
\end{align*}

\begin{theorem}\label{Thm_Decomp_flat_Moving}
Let $0\leq a<b$. For every bounded continuous function $F$ on $C([0,1], \mathbb{R})$, it holds that
\begin{align*}
E\left[ F(H^{a \to b}) \right]
= \int_0^b 
E\left[ F(r_{[0,t]}^{a \to y}|_{K_{[0, t]}^-(b)}
\oplus_t H_{[t,1]}^{y \to b})\right] 
P\left( H^{a \to b}(t) \in dy \right), \qquad 0<t<1, 
\end{align*}
where 
$r_{[0,t]}^{a \to y}|_{K_{[0, t]}^-(b)}$ and 
$H_{[t,1]}^{y \to b}$
are chosen to be independent, and 
$H_{[t,1]}^{y \to b}$ is
an $\mathbb{R}$-valued continuous Markov process that satisfies
\begin{align*}
r_{[t,1]}^{y \to b}|_{K_{[t,1]}^-(b+\eta)} \xrightarrow{\mathcal{D}} H_{[t,1]}^{y \to b}, 
\qquad \eta \downarrow 0.
\end{align*}
\end{theorem}

As an application of Theorem~\ref{Thm_Decomp_flat_Moving}, 
we show that the BES($\delta$) house-moving does not hit $b$ 
on the time interval $[0,1)$. 

\begin{prop}\label{Prop_ae_Property_of_Moving}
Let $0\leq a<b$. For $0<t<1$, it holds that
\begin{align*}
P\left( \max_{0 \leq u \leq t} H^{a \to b}(u) < b \right) = 1.
\end{align*}
\end{prop}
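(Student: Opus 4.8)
The plan is to use the decomposition formula (Theorem~\ref{Thm_Decomp_flat_Moving}) to reduce the problem to a statement about the conditioned Bessel bridge on an initial segment. Fix $0<t<1$ and apply the decomposition at time $t$. The key observation is that the event $\{\max_{0\leq u\leq t}H^{a\to b}(u)<b\}$ depends only on the restriction of the path to $[0,t]$. Under the decomposition, this restriction is distributed as $r_{[0,t]}^{a\to y}|_{K_{[0,t]}^-(b)}$ for $y$ drawn according to $P(H^{a\to b}(t)\in dy)$, independently of the piece on $[t,1]$. Since the law of $H^{a\to b}(t)$ is supported on $(0,b)$ (its density is given in Theorem~\ref{Thm_MainResult2} for $y\in(0,b)$), I would integrate over $y\in(0,b)$ and it suffices to show that for each such $y$,
\begin{align*}
P\left(\max_{0\leq u\leq t}r_{[0,t]}^{a\to y}(u)<b ~\Big|~ M_{[0,t]}(r_{[0,t]}^{a\to y})\leq b\right)=1.
\end{align*}

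First I would make precise the sense in which conditioning on $K_{[0,t]}^-(b)$ means conditioning on $\{M_{[0,t]}(r_{[0,t]}^{a\to y})\leq b\}$; this is exactly the restriction notation introduced before Theorem~\ref{Thm_MainResult2}. Then the claim reduces to showing that, for a BES($\delta$) bridge from $a$ to $y$ with $a,y\in[0,b)$, conditioned on its maximum being at most $b$, the maximum is in fact strictly less than $b$ almost surely. Equivalently, writing $M:=M_{[0,t]}(r_{[0,t]}^{a\to y})$, I must verify that $P(M=b\,|\,M\leq b)=0$, i.e. that $P(M=b)=0$. The natural route is to argue that the law of $M$ has no atom at $b$: since both endpoints $a$ and $y$ lie strictly below $b$ and the bridge has continuous paths that are, away from the endpoints, absolutely continuous with respect to a nondegenerate diffusion, the distribution function $c\mapsto P(M\leq c)$ is continuous in $c$ on the relevant range.

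The cleanest way to see the absence of an atom is to relate $P(M\leq c)$ to the quantity $P(M_{[0,t]}(r_{[0,t]}^{a\to y})\leq \eta)$ appearing in the definition of $q_1^{(\eta)}$, and to use its differentiability in the barrier level: the very definition of $q_2^{(\eta)}$ presupposes that $\varepsilon\mapsto P(M_{[0,t]}(r_{[0,t]}^{y\to\eta})\leq \eta+\varepsilon)$ is differentiable at $\varepsilon=0$, hence continuous there, which already rules out a jump. More directly, for fixed interior endpoints the joint density of $(r(u_1),\dots,r(u_k))$ at sample times is smooth in the spatial variables, so $c\mapsto P(M\leq c)$ is continuous; continuity of this distribution function is equivalent to $P(M=c)=0$ for every $c$, and in particular for $c=b$.

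The main obstacle I anticipate is making the atomlessness of $M$ rigorous at the barrier level $b$ when one endpoint may equal $0$ (the case $a=0$), where the Bessel bridge is singular at the starting time and the transition density degenerates. I would handle this by splitting off an initial small interval $[0,\epsilon]$ and using the Markov property together with the explicit transition density on $(0,t]$, where the process is a genuine diffusion away from $0$, so that the continuity of $c\mapsto P(M\leq c)$ follows from dominated convergence applied to the smooth finite-dimensional densities; the contribution of the endpoints to the maximum is negligible because $a,y<b$ and paths are continuous. Once $P(M=b)=0$ is established for every admissible $y\in(0,b)$, integrating against $P(H^{a\to b}(t)\in dy)$ and invoking the decomposition formula yields $P(\max_{0\leq u\leq t}H^{a\to b}(u)<b)=1$.
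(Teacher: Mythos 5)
Your proposal follows essentially the same route as the paper: apply the decomposition formula at time $t$, reduce to showing that the maximum of the conditioned bridge $r_{[0,t]}^{a\to y}|_{K_{[0,t]}^-(b)}$ has no atom at $b$, and obtain that atomlessness from the continuity (indeed differentiability) of the barrier level $\eta\mapsto P(M_{[0,t]}(r_{[0,t]}^{a\to y})\le\eta)$ established in Proposition~\ref{Prop_Diff_max_dist_of_BES_bridge}; the paper packages this as Lemma~\ref{30} before specializing to $x=b$. The only step you leave implicit is that Theorem~\ref{Thm_Decomp_flat_Moving} is stated for bounded continuous $F$ and must first be extended to indicator functions of Borel sets (the paper's Lemma~\ref{Rem_F_canbetakento_Indicator_Moving}, via approximation and Dynkin's $\pi$-$\lambda$ theorem), a routine but necessary addition; also note the $a=0$ case needs no special splitting argument since the explicit series formulas in Section~2.2 already cover the bridge started at $0$.
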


By using Theorem~\ref{Thm_MainResult2}, 
we can also prove that the distribution of the BES($\delta$) house-moving 
is absolutely continuous with respect to the BES($\delta$) process.

Let $P^X$ denote the measure on $(C([0,1], \mathbb{R}), \mathcal{B}(C([0,1], \mathbb{R})))$ induced by a continuous process $X=\{X(t)\}_{t\in[0, 1]}$. 
In addition, for $0<t<1$, we define 
\[P_t^{X}:=P\circ (\pi_{[0, t]}\circ X)^{-1}.\]
\begin{theorem}\label{Thm_abs_conti}
Let $0\leq a<b$ and $t\in (0, 1)$. Then, we have 
\[\frac{dP_t^{H^{a\to b}}}{dP_t^{R^a}}(w)=\frac{q^{(b)}_2(1-t, w(t))}{q^{(b)}_2(1, a)}\cdot 1_{K_{[0, t]}^-(b)}(w),\quad w\in C([0, t],\mathbb{R}).\]
\end{theorem}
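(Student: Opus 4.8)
The plan is to read off the density from the decomposition formula in Theorem~\ref{Thm_Decomp_flat_Moving}, combined with the elementary relationship between Bessel bridges and the Bessel process. Fix $t\in(0,1)$ and let $\Phi$ be an arbitrary bounded continuous function on $C([0,t],\mathbb{R})$. Setting $F:=\Phi\circ\pi_{[0,t]}$, which is bounded and continuous on $C([0,1],\mathbb{R})$ because $\pi_{[0,t]}$ is continuous, and using $(w_1\oplus_t w_2)|_{[0,t]}=w_1$, Theorem~\ref{Thm_Decomp_flat_Moving} collapses to
\[
E\left[\Phi\left(H^{a\to b}|_{[0,t]}\right)\right]
= \int_0^b E\left[\Phi\left(r_{[0,t]}^{a\to y}|_{K_{[0,t]}^-(b)}\right)\right]\,
P\left(H^{a\to b}(t)\in dy\right),
\]
so that $P_t^{H^{a\to b}}$ is exhibited as a mixture of the laws of the conditioned bridges $r_{[0,t]}^{a\to y}|_{K_{[0,t]}^-(b)}$.

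Next I would unfold the conditioning and cancel the normalizing constant. By the definition of $P_{Y^{-1}(\Lambda)}$ applied to the bridge,
\[
E\left[\Phi\left(r_{[0,t]}^{a\to y}|_{K_{[0,t]}^-(b)}\right)\right]
= \frac{E\left[\Phi\left(r_{[0,t]}^{a\to y}\right)\,1_{\{M_{[0,t]}(r_{[0,t]}^{a\to y})\leq b\}}\right]}
{P\left(M_{[0,t]}(r_{[0,t]}^{a\to y})\leq b\right)},
\]
while by Theorem~\ref{Thm_MainResult2} and the definition of $q_1^{(b)}$, writing $p(0,a,t,y):=P(R^a(t)\in dy)/dy$,
\[
P\left(H^{a\to b}(t)\in dy\right)
= \frac{p(0,a,t,y)\,P\left(M_{[0,t]}(r_{[0,t]}^{a\to y})\leq b\right)\,q_2^{(b)}(1-t,y)}{q_2^{(b)}(1,a)}\,dy.
\]
The factor $P(M_{[0,t]}(r_{[0,t]}^{a\to y})\leq b)$ cancels between these two expressions, leaving
\[
E\left[\Phi\left(H^{a\to b}|_{[0,t]}\right)\right]
= \frac{1}{q_2^{(b)}(1,a)}\int_0^b
E\left[\Phi\left(r_{[0,t]}^{a\to y}\right)\,1_{\{M_{[0,t]}(r_{[0,t]}^{a\to y})\leq b\}}\right]
q_2^{(b)}(1-t,y)\,p(0,a,t,y)\,dy.
\]

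The final step is to recognize the right-hand side as an expectation over $R^a$. Using that the bridge $r_{[0,t]}^{a\to y}$ is a regular conditional version of $R^a|_{[0,t]}$ given $R^a(t)=y$, and that $p(0,a,t,y)\,dy$ is the law of $R^a(t)$, disintegration yields $\int_0^\infty E[G(r_{[0,t]}^{a\to y})\,h(y)]\,p(0,a,t,y)\,dy = E[G(R^a|_{[0,t]})\,h(R^a(t))]$ for suitable $G,h$. Applying this with $G(w)=\Phi(w)\,1_{\{M_{[0,t]}(w)\leq b\}}$ and $h(y)=1_{[0,b]}(y)\,q_2^{(b)}(1-t,y)/q_2^{(b)}(1,a)$, and noting that on $\{M_{[0,t]}(R^a)\leq b\}$ one automatically has $R^a(t)\leq b$ so that the range restriction $y\leq b$ is subsumed by the maximum indicator, gives
\[
E\left[\Phi\left(H^{a\to b}|_{[0,t]}\right)\right]
= E\left[\Phi\left(R^a|_{[0,t]}\right)\,
\frac{q_2^{(b)}(1-t,R^a(t))}{q_2^{(b)}(1,a)}\,1_{K_{[0,t]}^-(b)}\left(R^a|_{[0,t]}\right)\right].
\]
Since this holds for every bounded continuous $\Phi$ on the Polish space $C([0,t],\mathbb{R})$, the measure $P_t^{H^{a\to b}}$ coincides with the measure having density $q_2^{(b)}(1-t,w(t))/q_2^{(b)}(1,a)\cdot 1_{K_{[0,t]}^-(b)}(w)$ with respect to $P_t^{R^a}$, which is the claimed identity.

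\textbf{Main obstacle.} The delicate point is the disintegration in the last step: rigorously, that $\{r_{[0,t]}^{a\to y}\}_{y}$ is a regular conditional distribution of $R^a|_{[0,t]}$ given its terminal value, and that the functionals $w\mapsto 1_{\{M_{[0,t]}(w)\leq b\}}$ and $y\mapsto q_2^{(b)}(1-t,y)$ are measurable enough to justify the interchange. One must also check that the denominators $q_2^{(b)}(1,a)$ and $P(M_{[0,t]}(r_{[0,t]}^{a\to y})\leq b)$ (for a.e.\ $y\in(0,b)$, by Proposition~\ref{Prop_ae_Property_of_Moving}) are strictly positive, so that the cancellation above is legitimate.
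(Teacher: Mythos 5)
Your argument is correct, and it is a genuinely different route from the one in the paper. The paper does not invoke Theorem~\ref{Thm_Decomp_flat_Moving} at all: it first records the Radon--Nikodym derivative of the bridge law with respect to the Bessel process law on $[0,t]$ (Lemma~\ref{Lem_abs_conti_BES_process_and_BES_bridge}), then computes the conditional law $P^{r^{a\to b}}\left(\pi_{[0,t]}^{-1}(A)\mid K^-(b+\eta)\right)$ explicitly as $\int_A \frac{q^{(b+\eta)}_1(t,w(t),1,b)}{q^{(b+\eta)}_1(0,a,1,b)}1_{K_{[0,t]}^-(b+\eta)}(w)\,P_t^{R^a}(dw)$ via the Markov property of the bridge, and finally passes to the limit $\eta\downarrow 0$ using the uniform bounds of Lemmas~\ref{Lem_Esti_q_first} and~\ref{Lem_Esti_q}, dominated convergence, and the weak convergence of Theorem~\ref{Thm_MainResult2}. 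You bypass that entire $\eta\downarrow 0$ analysis by letting Theorem~\ref{Thm_Decomp_flat_Moving} (which already encapsulates the limiting argument) do the work, and your observation that the factor $P\left(M_{[0,t]}(r_{[0,t]}^{a\to y})\leq b\right)$ cancels between the conditioned-bridge normalization and $q_1^{(b)}(0,a,t,y)$ is the key simplification. The price is exactly the point you flag: you need the family $\{r_{[0,t]}^{a\to y}\}_y$ to be a regular conditional distribution of $R^a|_{[0,t]}$ given $R^a(t)$, with the associated measurability in $y$. The paper uses this disintegration implicitly (e.g.\ in the proof of Lemma~\ref{Lem_Joint_Dist_BES_bridge}, where $P\left(R^a(t)\in dy, M_t(R^a)\leq c\right)$ is factored through the bridge) but never isolates it as a lemma, so if you wanted to match the paper's level of rigor you would either state it explicitly or replace it by the paper's route through Lemma~\ref{Lem_abs_conti_BES_process_and_BES_bridge}. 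Your final step, passing from agreement on bounded continuous test functions to equality of measures, is the same monotone-class/Dynkin argument the paper spells out with the approximation $\phi(n d_\infty(w,A))\downarrow 1_A(w)$; it deserves at least that one line. You should also note that $q_2^{(b)}(1,a)>0$ is supplied by Lemma~\ref{Lem_q2_is_positive}, and that $P\left(M_{[0,t]}(r_{[0,t]}^{a\to y})\leq b\right)>0$ for $y\in(0,b)$ follows from the positivity of the series in Theorem~\ref{Thm_max_dist_of_BES_bridge} (or the lower bound used in Lemma~\ref{Lem_q2_is_positive}), so the cancellation is legitimate.
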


Let $\tau_{a, b}$ denote the first hitting time of the point $b$ by $R^a$:
\[
\tau_{a, b}:=\inf\{r\geq 0~|~ R^a(r)=b\}.
\]
\begin{prop}\label{Prop_MainResult1}
Let $0\leq a<b$. 
The BES($\delta$) house-moving $H^{a \to b}=\{H^{a \to b}(t) \}_{t \in [0,1]}$ satisfies
\begin{align*}
P\left(H^{a\to b}(t)\in dy\right)
&=P\left(R^a(t)\in dy~|~\tau_{a, b}=1\right),\\
P\left(H^{a\to b}(t)\in dy~|~H^{a\to b}(s)=x\right)
&=P\left(R^a(t)\in dy~|~R^a(s)=x, \tau_{a, b}=1\right)\notag
\end{align*}
for $0<s<t<1$ and $x, y\in (0, b)$.
\end{prop}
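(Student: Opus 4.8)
The plan is to compute both conditional laws on the right-hand side explicitly and to match them against the formulas for $H^{a\to b}$ already recorded in Theorem~\ref{Thm_MainResult2}. Write $\varphi_{x,b}(s):=P(\tau_{x,b}\in ds)/ds$ for the first-passage density of $R^{x}$ to the level $b$. The entire argument then rests on one identity: that the function $q^{(b)}_{2}$ defined in the introduction is, up to a universal multiplicative constant, exactly this first-passage density,
\begin{equation*}
q^{(b)}_{2}(s,x)=c\,\varphi_{x,b}(s),\qquad 0<s\le 1,\ 0<x<b,\tag{$\star$}
\end{equation*}
with $c>0$ independent of $(s,x)$. Granting $(\star)$, the ratios $q^{(b)}_{2}(1-t,y)/q^{(b)}_{2}(1,a)$ and $q^{(b)}_{2}(1-t,y)/q^{(b)}_{2}(1-s,x)$ that occur in Theorem~\ref{Thm_MainResult2} collapse to $\varphi_{y,b}(1-t)/\varphi_{a,b}(1)$ and $\varphi_{y,b}(1-t)/\varphi_{x,b}(1-s)$, the constant $c$ cancelling.

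To prove $(\star)$, I would first rewrite $q^{(b)}_{1}$ as a killed transition density. By the Bessel-bridge decomposition,
\[
q^{(\eta+\varepsilon)}_{1}(0,y,t,\eta)=\frac{P(R^{y}(t)\in d\eta)}{d\eta}\,P\bigl(M_{[0,t]}(r^{y\to\eta}_{[0,t]})\le\eta+\varepsilon\bigr)=p^{(\eta+\varepsilon)}(t,y,\eta),
\]
where $p^{(L)}(t,y,\cdot)$ denotes the transition sub-density of $R^{y}$ killed on reaching the level $L$. Hence $q^{(\eta)}_{2}(t,y)=\lim_{\varepsilon\downarrow 0}\partial_{\varepsilon}p^{(\eta+\varepsilon)}(t,y,\eta)$. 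Since the killed density vanishes at its absorbing barrier, the near-barrier expansion $p^{(\eta+\varepsilon)}(t,y,\eta)=\varepsilon\,(-\partial_{z}p^{(\eta+\varepsilon)}(t,y,z)|_{z=\eta+\varepsilon})+o(\varepsilon)$ gives $q^{(\eta)}_{2}(t,y)=-\partial_{z}p^{(\eta)}(t,y,z)|_{z=\eta}$. On the other hand, integrating the Fokker--Planck equation for the killed $\mathrm{BES}(\delta)$ semigroup over $(0,\eta)$ and using the absorbing condition at $\eta$ together with the absence of probability flux at the origin yields the flux representation $\varphi_{y,\eta}(t)=-\tfrac12\,\partial_{z}p^{(\eta)}(t,y,z)|_{z=\eta}$. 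Comparing the two displays proves $(\star)$ with $c=2$.

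With $(\star)$ in hand the conditional laws follow from the strong Markov property and Bayes' rule. First, $\{M_{[0,t]}(R^{a})=b\}$ is $P$-null on $\{R^{a}(t)<b\}$, so $q^{(b)}_{1}(0,a,t,y)\,dy=P(R^{a}(t)\in dy,\ \tau_{a,b}>t)$; applying the strong Markov property at time $t$ then factors the joint law as
\[
P(R^{a}(t)\in dy,\ \tau_{a,b}\in dr)=q^{(b)}_{1}(0,a,t,y)\,\varphi_{y,b}(r-t)\,dy\,dr\qquad(t<r,\ 0<y<b).
\]
Dividing by $P(\tau_{a,b}\in dr)=\varphi_{a,b}(r)\,dr$ and putting $r=1$ gives $P(R^{a}(t)\in dy\mid\tau_{a,b}=1)=q^{(b)}_{1}(0,a,t,y)\varphi_{y,b}(1-t)/\varphi_{a,b}(1)\,dy$, which by $(\star)$ is exactly the marginal law of $H^{a\to b}(t)$. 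The transition law is identical: conditioning in addition on $R^{a}(s)=x$ and using the Markov property at time $s$ --- legitimate because $\tau_{a,b}=1>s$ forces $R^{a}$ to remain below $b$ on $[0,s]$ --- the post-$s$ process is $R^{x}$ conditioned to hit $b$ for the first time at time $1-s$, and the same factorization over $[s,t]$, combined with the time-homogeneity $q^{(b)}_{1}(s,x,t,y)=q^{(b)}_{1}(0,x,t-s,y)$, produces $q^{(b)}_{1}(s,x,t,y)\varphi_{y,b}(1-t)/\varphi_{x,b}(1-s)\,dy$, again matching Theorem~\ref{Thm_MainResult2} through $(\star)$.

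I expect the only real obstacle to be the identity $(\star)$. Two points there require care: justifying the near-barrier Taylor expansion and the interchange of $\partial_{\varepsilon}$ with $\varepsilon\downarrow 0$, which needs regularity of the killed $\mathrm{BES}(\delta)$ density up to the moving boundary (available from the Fourier--Bessel expansion already exploited for Theorem~\ref{Thm_MainResult2}); and the vanishing of the probability flux at the origin for every $\delta>0$, where the drift $\tfrac{\delta-1}{2x}$ is singular and the correct boundary behaviour of $\mathrm{BES}(\delta)$ at $0$ must be invoked. Everything else reduces to the strong Markov property and elementary conditioning.
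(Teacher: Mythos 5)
Your proposal is correct and follows essentially the same route as the paper: your identity $(\star)$ with $c=2$ is precisely Lemma~\ref{Lem_relation_fht_q}, which the paper likewise obtains as a probability-flux computation at the barrier (it integrates the forward equation in speed-measure form over $(0,b)$ and reads off the boundary term directly from the Fourier--Bessel series, rather than via your near-barrier Taylor expansion of the killed density), and your Markov/Bayes factorization of $P(R^a(t)\in dy,\ \tau_{a,b}\in dr)$ is exactly the paper's Theorem~\ref{Thm_moving_density_by_fht}. The technical points you flag as needing care --- term-by-term differentiation up to the boundary and the vanishing of the flux at the origin for every $\delta>0$ --- are indeed where the paper spends its effort, handling both with the estimates of Lemma~\ref{Lem_Esti_J} and \eqref{esti_J}.
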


Finally, we study the sample path properties of 
BES($\delta$) house-moving $H^{a \to b}$ 
and establish the regularity of its sample path. 

\begin{prop}
\label{Prop_Holder_conti_Moving}
For every $\gamma \in (0, \frac{1}{2})$, 
the path of $H^{a \to b}$ $(0\leq a<b)$ on $[0, 1]$ 
is locally H\"{o}lder-continuous with exponent $\gamma$:
\begin{align*}
P\left( \bigcup_{n=1}^{\infty} \left\{ 
\sup_{\substack{t,s \in [0,1]\\0 < |t-s| \leq \frac{1}{n}}} 
\frac{\left| H^{a \to b}(t) - H^{a \to b}(s) \right|}
{\left| t-s \right|^{\gamma}} 
< \infty \right\} \right) = 1.
\end{align*}
\end{prop}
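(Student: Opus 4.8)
The plan is to deduce the stated local H\"older regularity from Kolmogorov's continuity criterion (the Kolmogorov--Chentsov theorem). Since $H^{a\to b}$ is continuous on the compact interval $[0,1]$, it suffices to show that $H^{a\to b}$ is almost surely \emph{globally} H\"older-continuous of exponent $\gamma$ on $[0,1]$: a function satisfying $|f(t)-f(s)|\le L|t-s|^{\gamma}$ already lies in the $n=1$ set of the displayed union. Thus I would reduce the statement to establishing, for every $p>2$, a moment estimate
\begin{align}\label{eq:moment-bound}
E\left[\left| H^{a\to b}(t) - H^{a\to b}(s) \right|^{p}\right] \le C_{p}\,|t-s|^{p/2},
\qquad 0\le s\le t\le 1,
\end{align}
with $C_{p}$ independent of $s,t$. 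Granting \eqref{eq:moment-bound}, the Kolmogorov--Chentsov theorem yields H\"older continuity of exponent $\gamma$ for every $\gamma<\tfrac12-\tfrac1p$, and letting $p\to\infty$ covers all $\gamma\in(0,\tfrac12)$.

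First I would prove \eqref{eq:moment-bound} when $s,t$ range over a compact subinterval $[0,1-\varepsilon]$. Using the conditional law from Theorem~\ref{Thm_MainResult2}, the increment moment conditioned on $H^{a\to b}(s)=x$ equals $\int_{0}^{b}|y-x|^{p}\,\frac{q^{(b)}_{1}(s,x,t,y)\,q^{(b)}_{2}(1-t,y)}{q^{(b)}_{2}(1-s,x)}\,dy$. On $[0,1-\varepsilon]$ the ratio $q^{(b)}_{2}(1-t,y)/q^{(b)}_{2}(1-s,x)$ is bounded, while the barrier factor $P\big(M_{[s,t]}(r^{x\to y}_{[s,t]})\le b\big)$ in $q^{(b)}_{1}$ is at most $1$, so $q^{(b)}_{1}(s,x,t,y)\le \frac{P(R^{x}(t-s)\in dy)}{dy}$ is controlled by the BES($\delta$) transition density, which obeys the Gaussian increment bound $\int|y-x|^{p}\,\frac{P(R^{x}(t-s)\in dy)}{dy}\le C\,|t-s|^{p/2}$. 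Equivalently, one may invoke Theorem~\ref{Thm_abs_conti}, whose Radon--Nikodym density is bounded on $[0,1-\varepsilon]\times[0,b]$, and transfer the classical local H\"older regularity (and the attendant increment moments) of the BES($\delta$) process through absolute continuity.

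The hard part will be the behaviour near the terminal time $t=1$, where $H^{a\to b}$ is pinned at $b$ and the absolute continuity of Theorem~\ref{Thm_abs_conti} degenerates, the density $q^{(b)}_{2}(1-t,\cdot)$ becoming singular as $t\uparrow 1$. To cover a neighbourhood of $t=1$ I would pass to the time-reversed process $\widetilde{H}(u):=H^{a\to b}(1-u)$, which is again Markov and whose transition density is read off from Theorem~\ref{Thm_MainResult2}; this converts the terminal pinning at $b$ into an initial condition at $b$ and reduces the endpoint estimate to an initial-point estimate of the same type treated in the interior. Making this rigorous requires the short-time and near-boundary asymptotics of $q^{(b)}_{1}$ and $q^{(b)}_{2}$ as $t-s\downarrow0$ and $x,y\uparrow b$ simultaneously, for which I would use the Fourier--Bessel estimates already employed to establish Theorem~\ref{Thm_MainResult2}. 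Verifying \eqref{eq:moment-bound} uniformly as $s,t\uparrow1$, where the conditioning forcing $H^{a\to b}$ up to $b$ and the maximum-below-$b$ constraint both concentrate, is the delicate point; once it is in hand, \eqref{eq:moment-bound} holds on all of $[0,1]$ and the conclusion follows as above.
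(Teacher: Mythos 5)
There is a genuine gap, and it sits exactly where you flagged it: the uniform moment bound $E[|H^{a\to b}(t)-H^{a\to b}(s)|^{p}]\le C_{p}|t-s|^{p/2}$ near the terminal time. Your interior argument (on $[0,1-\varepsilon]$, via the boundedness of the Radon--Nikodym density of Theorem~\ref{Thm_abs_conti} or the Gaussian bound on the free Bessel kernel) is sound, but the proposed repair at $t=1$ by time reversal does not reduce the problem to "an initial-point estimate of the same type treated in the interior.'' The reversed process starts \emph{at the barrier} $b$, i.e.\ exactly on the boundary of the constraint set $K^-(b)$, which is qualitatively different from starting at $a<b$ strictly inside it; moreover, for general $\delta$ there is no space-time reversal identity (the paper establishes one only for $\delta=3$), so the law of the reversed process is not again of the form already analysed. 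The quantitative evidence that your clean bound is not available from the paper's machinery is inequality \eqref{Lem_Momentineq_eq2}: the best estimate obtained there is $E[|H^{a\to b}(1-r)-b|^{2\alpha}]\lesssim r^{\alpha-\nu-2}$, which loses a factor $r^{-\nu-2}$ relative to the $r^{\alpha}$ you would need, and similarly \eqref{Lem_Momentineq_eq3} carries the non-uniform weight $s^{-\nu-3/2}(1-t)^{-\nu-5/2}$. So the step "Verifying the moment bound uniformly as $s,t\uparrow 1$'' is not a technical detail to be filled in later; it is the entire difficulty, and your sketch does not supply an argument for it.

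The paper's proof avoids needing any uniform bound. It first transfers the \emph{non-uniform} moment inequalities of Lemma~\ref{Lem_MomentEq} from the approximating conditioned bridges to $H^{a\to b}$ via the weak convergence of Theorem~\ref{Thm_MainResult2}, Skorokhod representation and Fatou's lemma, and then runs the dyadic Borel--Cantelli chaining argument of Karatzas--Shreve directly. The point is that on the $n$-th dyadic grid every interior point is at distance at least $2^{-n}$ from the endpoints, so the blow-up factors $s^{-\nu-3/2}(1-t)^{-\nu-5/2}$ are absorbed into powers of $2^{n}$ and compensated by taking the moment order $m_{0}$ large, with $\gamma<\frac{m_{0}-3\nu-6}{2m_{0}}$; the two boundary increments $k=1$ and $k=2^{n}$ are handled separately by \eqref{Lem_Momentineq_eq1} and \eqref{Lem_Momentineq_eq2}. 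If you want to salvage your Kolmogorov--Chentsov route, you would have to either prove the sharp endpoint moment bound (which is not in the paper and would require new estimates on $q^{(b)}_{2}(u,y)$ as $u\downarrow 0$ and $y\uparrow b$ jointly), or weaken the criterion to tolerate polynomially degenerate constants --- which is precisely what the dyadic argument does.
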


The remainder of this paper is structured as follows. 
In Section~\ref{section_preliminaries}, we introduce some basic facts related to Bessel processes and Bessel bridges, 
and we prove the results for the distribution of the maximal value of the Bessel bridge. 
Section~\ref{section_preliminaries} is also devoted to proving some inequalities that are used in this paper.  
In Section~\ref{section_const_weakconv}, we prove Theorem~\ref{Thm_MainResult2}, 
which gives the construction of the Bessel house-moving as 
the weak limit of conditioned BES($\delta$) bridges. 
In Section~\ref{Section_DecompFormula}, 
we prove the decomposition formula for the distribution of the Bessel house-moving (Theorem~\ref{Thm_Decomp_flat_Moving}) 
and use this formula to prove some results, including Proposition~\ref{Prop_ae_Property_of_Moving}. 
Section~\ref{Section_abso_conti} is devoted to proving the absolute continuity of the distribution of the BES($\delta$) house-moving 
with respect to the BES($\delta$) process (Theorem~\ref{Thm_abs_conti}). 
In Section~\ref{section_const_moving}, we prove Proposition~\ref{Prop_MainResult1} by using the first hitting time of the Bessel process, 
thus giving us the characterization of the Bessel house-moving. 
We show sample path properties of the Bessel house-moving in Section~\ref{Section_Holder} and Section~\ref{section_numerical}. 
Section~\ref{Section_Holder} is devoted to proving the regularity of the sample path of 
the BES($\delta$) house-moving (Proposition~\ref{Prop_Holder_conti_Moving}). 
In Section~\ref{section_numerical}, we show that the BES($3$) house-moving has the space-time reversal property.

\section{Preliminaries}\label{section_preliminaries}

\subsection{Bessel process and Bessel bridge}\label{subsection_BES_process_bridge}

The BES($\delta$) process is a one-dimensional diffusion generated by 
$\mathcal{L}_{\delta}:=\frac{1}{2}\frac{d^2}{dx^2}+\frac{\delta-1}{2x}\frac{d}{dx}$. 
Note that the point $0$ is an entrance boundary for $\delta\geq 2$ ($\nu\geq 0$) and a regular boundary for $0<\delta<2$ ($-1<\nu<0$). 
In the case that $0<\delta<2$, we impose the reflecting boundary condition at $0$.

In addition, for $0\leq a<b$, the BES($\delta$) bridge from $a$ to $b$ on $[0, 1]$ 
is defined by conditioning the BES($\delta$) process from $a$, $R^a=\{R^a(t)\}_{t\geq 0}$, on $R^a(1)=b$. 

For $t> 0$ and $x, y\in (0, \infty)$, we set 
\begin{align*}
&n_t(x) := \frac{1}{\sqrt{2 \pi t}} \exp \left( -\frac{x^2}{2t} \right),\quad
A^{(\nu)}_t(x,y):=n_t(x)n_t(y)I_{\nu} \left(\frac{xy}{t}\right).
\end{align*}

Let $a, b\geq 0$. 
For $0 < s < t$ and $x, y > 0$, we have the transition densities of $R^a$ (\cite[Chapter XI]{bib_RevuzYor}):
\begin{align*}
&P\left(R^a(t)\in dy\right)
= 2\pi y\left(\frac{y}{a}\right)^{\nu}A^{(\nu)}_t(a, y)dy,
\\
&P\left( R^a(t) \in dy~|~ R^a(s) = x \right)
=2\pi y\left(\frac{y}{x}\right)^{\nu}A^{(\nu)}_{t-s}(x, y)dy.
\end{align*}
For $0<s<t<1$ and $x, y>0$, we have the transition densities of the BES($\delta$) bridge $r^{a\to b}$ on $[0, 1]$ (\cite[Chapter XI]{bib_RevuzYor}): 
\begin{align}
P\left( r^{a \to b}(t) \in dy \right)
&=\frac{P\left(R^a(t)\in dy\right)P\left(R^y(1-t)\in db\right)}{P\left(R^a(1)\in db\right)}
\label{BESbridge_density} \\
&=\frac{2\pi y A^{(\nu)}_t(a, y)A^{(\nu)}_{1-t}(y, b)}{A^{(\nu)}_{1}(a, b)}dy,
\nonumber \\
P\left( r^{a \to b}(t) \in dy ~|~ r^{a \to b}(s) = x \right)
&=\frac{P\left(R^x(t-s)\in dy\right)P\left(R^y(1-t)\in db\right)}{P\left(R^x(1-s)\in db\right)}
\label{BESbridge_transdensity} \\
&=\frac{2\pi yA^{(\nu)}_{t-s}(x, y)A^{(\nu)}_{1-t}(y, b)}
{A^{(\nu)}_{1-s}(x, b)}dy.
\nonumber
\end{align}

In the next lemma, we express the joint densities of the Bessel bridge 
and the maximal value of the Bessel process by the maximal values of the Bessel bridge.
\begin{lem}
\label{Lem_Joint_Dist_BES_bridge}
Let $c\geq 0$ and $0\leq a, b \leq c$. For $0<s<t<1$ and $0\leq x, y\leq c$, we have
\begin{align}
&P\left( r^{a \to b}(t) \in dy, M(r^{a \to b}) \leq c \right)
\nonumber \\
&\quad= P\left( M_{[0, t]}(r_{[0,t]}^{a \to y})\leq c \right) 
P\left( M_{[t, 1]}(r_{[t,1]}^{y \to b})\leq c \right) 
P\left( r^{a \to b}(t) \in dy \right),
\label{Joint_Dist_BES_bridge1}\\
&P\left( r^{a \to b}(t) \in dy, r^{a \to b}(s) \in dx, M(r^{a \to b}) \leq c \right) 
\nonumber \\
&\quad = P\left( M_{[0, s]}(r_{[0,s]}^{a \to x})\leq c \right) 
P\left(M_{[s,t]}(r_{[s,t]}^{x \to y}) \leq c \right) 
P\left( M_{[t, 1]}(r_{[t,1]}^{y \to b}) \leq c \right) \label{Joint_Dist_BES_bridge2}\\
&\qquad \times
P\left( r^{a \to b}(t) \in dy, r^{a \to b}(s) \in dx \right).
\notag
\end{align}
\end{lem}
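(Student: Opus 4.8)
The plan is to establish the factorization of these joint densities by exploiting the Markov property of the Bessel bridge together with the conditional independence of its increments given intermediate values. The key structural observation is that once we condition a Bessel bridge $r^{a\to b}$ on passing through the point $y$ at time $t$ (and through $x$ at time $s$ in the second case), the process splits into independent pieces on the successive subintervals, and each piece is itself a Bessel bridge on that subinterval. This is precisely the content that lets the maximal-value event $\{M(r^{a\to b})\le c\}$ decompose.

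\textbf{Proof of \eqref{Joint_Dist_BES_bridge1}.} First I would condition on the value of the bridge at time $t$. Using the Markov property, I claim that given $r^{a\to b}(t)=y$, the restricted processes $r^{a\to b}|_{[0,t]}$ and $r^{a\to b}|_{[t,1]}$ are independent, and are distributed as a BES($\delta$) bridge $r_{[0,t]}^{a\to y}$ from $a$ to $y$ on $[0,t]$ and a BES($\delta$) bridge $r_{[t,1]}^{y\to b}$ from $y$ to $b$ on $[t,1]$, respectively. This follows from the transition-density description \eqref{BESbridge_density}--\eqref{BESbridge_transdensity}: the bridge is a time-inhomogeneous Markov process, and conditioning a Markov process on an interior value renders the past and future conditionally independent. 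Granting this, the global maximum event factors as
\begin{align*}
\{M(r^{a\to b})\le c\}
=\{M_{[0,t]}(r^{a\to b})\le c\}\cap\{M_{[t,1]}(r^{a\to b})\le c\},
\end{align*}
and under the conditional law these two events become independent events governed by $r_{[0,t]}^{a\to y}$ and $r_{[t,1]}^{y\to b}$ respectively. Writing
\begin{align*}
P\!\left(r^{a\to b}(t)\in dy,\,M(r^{a\to b})\le c\right)
=P\!\left(M(r^{a\to b})\le c\mid r^{a\to b}(t)=y\right)P\!\left(r^{a\to b}(t)\in dy\right)
\end{align*}
and then splitting the conditional probability into the product over the two subintervals yields \eqref{Joint_Dist_BES_bridge1}.

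\textbf{Proof of \eqref{Joint_Dist_BES_bridge2}.} The second identity is the natural extension: I would condition jointly on $r^{a\to b}(s)=x$ and $r^{a\to b}(t)=y$. By the same Markov-splitting argument applied at the two interior times $s<t$, the three restrictions $r^{a\to b}|_{[0,s]}$, $r^{a\to b}|_{[s,t]}$, $r^{a\to b}|_{[t,1]}$ become mutually independent and are distributed as Bessel bridges $r_{[0,s]}^{a\to x}$, $r_{[s,t]}^{x\to y}$, $r_{[t,1]}^{y\to b}$ on their respective subintervals. Decomposing $\{M(r^{a\to b})\le c\}$ as the intersection of the three sub-interval maximum events and using the conditional independence gives the triple product, which combined with the joint density $P(r^{a\to b}(t)\in dy,\,r^{a\to b}(s)\in dx)$ yields \eqref{Joint_Dist_BES_bridge2}.

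The main obstacle I anticipate is making the conditional-independence-and-splitting argument fully rigorous, rather than merely formal. Two points require care: first, one must justify that conditioning the Bessel bridge on an interior point genuinely produces independent Bessel bridges on the subintervals, which amounts to verifying the consistency of the transition densities in \eqref{BESbridge_density}--\eqref{BESbridge_transdensity} under concatenation; and second, the conditioning is on a measure-zero event $\{r^{a\to b}(t)=y\}$, so one should work with regular conditional distributions and verify the disintegration against the density $P(r^{a\to b}(t)\in dy)$. A clean way to circumvent the delicate conditioning is to compute both sides directly as densities: express $P(r^{a\to b}(t)\in dy,\,M(r^{a\to b})\le c)$ via the path measure of the underlying BES($\delta$) process $R^a$ conditioned at time $1$, insert a partition of the maximum over the two subintervals, and use the explicit formulas for the bridge densities to recognize the resulting expression as the claimed product. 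This avoids invoking regular conditional distributions abstractly and reduces the proof to an identity among the known densities.
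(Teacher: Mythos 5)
Your proposal is correct, and the ``clean way'' you describe in your final paragraph---lifting to the unconditioned BES($\delta$) process $R^a$, applying its Markov property at time $t$ (and at $s,t$ for the second identity), and then rewriting each factor $P(R^x(u)\in dz,\ M_u(R^x)\le c)$ as $P(M_{[\cdot,\cdot]}(r^{x\to z})\le c)\,P(R^x(u)\in dz)$ before recognizing the bridge density via \eqref{BESbridge_density}---is exactly the paper's proof. The conditional-independence framing in your main argument is the same underlying idea, and your identification of the measure-zero conditioning issue together with the density-level workaround resolves it in the same way the authors do.
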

\begin{proof}
First, we prove \eqref{Joint_Dist_BES_bridge1}. 
By the Markov property of $R^a$, we have
\begin{align}
&P\left( r^{a \to b}(t) \in dy, M(r^{a \to b}) \leq c \right)\nonumber \\
&\quad = \frac{P\left( R^a(t) \in dy, M(R^a) \leq c, R^a(1) \in db \right)}{P\left( R^a(1) \in db \right)} \nonumber \\
&\quad = \frac{P\left( R^y(1-t) \in db, M_{1-t}(R^y) \leq c \right) \times P\left( R^a(t) \in dy, M_t(R^a) \leq c \right)}{P\left( R^a(1) \in db \right)} .
\label{JointDist_BESbridge_eq_step1}
\end{align}
Therefore, because 
\begin{align*}
&P\left( R^y(1-t) \in db, M_{1-t}(R^y) \leq c \right)
= P\left( M_{[t, 1]}(r_{[t,1]}^{y \to b})\leq c \right) P\left( R^y(1-t) \in db \right), \\
&P\left( R^a(t) \in dy, M_t(R^a) \leq c \right)
= P\left( M_{[0, t]}(r_{[0,t]}^{a \to y})\leq c \right) P\left( R^a(t) \in dy \right),
\end{align*}
it follows from \eqref{BESbridge_density} and \eqref{JointDist_BESbridge_eq_step1} that
\begin{align*}
&P\left( r^{a \to b}(t) \in dy, M(r^{a \to b}) \leq c \right)\\
&\quad = P\left( M_{[0, t]}(r_{[0,t]}^{a \to y})\leq c\right) P\left( M_{[t, 1]}(r_{[t,1]}^{y \to b})\leq c \right) 
\frac{P\left( R^a(t) \in dy \right) P\left( R^y(1-t) \in db \right)}
{P\left( R^a(1) \in db \right)} \\
&\quad= P\left( M_{[0, t]}(r_{[0,t]}^{a \to y})\leq c \right) P\left( M_{[t, 1]}(r_{[t,1]}^{y \to b}) \leq c \right) P\left( r^{a \to b}(t) \in dy \right),
\end{align*}
which completes the proof. 
In a similar manner to the proof of \eqref{Joint_Dist_BES_bridge1}, 
we can obtain \eqref{Joint_Dist_BES_bridge2}.
\end{proof}

\subsection{Distribution of the maximal value of the Bessel bridge}

In this subsection, we prove the results for the distribution of the maximal value of the Bessel bridge used in this paper.

\begin{lem}\label{Lem_Esti_J}
Let $X>0$. There exist some $\widetilde{C}_{\nu}>0$ and $N_{\nu}\in\mathbb{N}$ such that 
\begin{align*}
&\frac{n\pi}{2}<j_{\nu, n}<2n\pi, \quad
\left|\frac{1}{J_{\nu+1}(j_{\nu, n})}\right|
\leq \pi\sqrt{n}, \\
&\left|\frac{J_{\nu}\left(Xj_{\nu, n}\right)}{J_{\nu+1}(j_{\nu, n})}\right|
\vee
\left|\frac{J_{\nu+1}\left(Xj_{\nu, n}\right)}{J_{\nu+1}(j_{\nu, n})}\right|
\leq \widetilde{C}_{\nu}
\frac{\left(1+X\pi\right)^{\frac{1}{2}}}{X},\quad (n>N_{\nu}).
\end{align*}
\end{lem}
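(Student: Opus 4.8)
The plan is to read off all three estimates from the classical asymptotics of the positive zeros $j_{\nu,n}$ of $J_\nu$ and of the Bessel functions $J_\nu,J_{\nu+1}$ themselves, choosing $N_\nu$ large enough that every error term below is under control and taking $\widetilde{C}_\nu$ to be the maximum of the finitely many constants that appear. Since $\nu$ is fixed throughout, I treat all $\nu$-dependent quantities as constants.

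First I would record the \emph{zero asymptotics}. By the McMahon expansion, $j_{\nu,n}=\bigl(n+\tfrac{\nu}{2}-\tfrac14\bigr)\pi+O(n^{-1})$ as $n\to\infty$, so in particular $j_{\nu,n}\sim n\pi$. Because the shift $\bigl(\tfrac{\nu}{2}-\tfrac14\bigr)\pi$ and the $O(n^{-1})$ term are eventually negligible compared with the gaps between $\tfrac{n\pi}{2}$, $n\pi$ and $2n\pi$, the two-sided bound $\tfrac{n\pi}{2}<j_{\nu,n}<2n\pi$ holds once $n>N_\nu$. For the denominator I would use the identity $J_{\nu+1}(j_{\nu,n})=-J_\nu'(j_{\nu,n})$ together with the classical value $|J_{\nu+1}(j_{\nu,n})|=\sqrt{2/(\pi j_{\nu,n})}\,(1+O(n^{-1}))$; inserting $j_{\nu,n}\sim n\pi$ gives $|J_{\nu+1}(j_{\nu,n})|\sim \tfrac{\sqrt2}{\pi\sqrt n}$, and the factor $\sqrt2>1$ survives the correction, so $|J_{\nu+1}(j_{\nu,n})|>\tfrac{1}{\pi\sqrt n}$ for $n>N_\nu$, which is the second inequality.

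Next I would set up two elementary envelopes for $J_\mu$ with $\mu\in\{\nu,\nu+1\}$ (both $>-1$). From the small-argument behaviour $J_\mu(z)\sim(z/2)^\mu/\Gamma(\mu+1)$, the map $z\mapsto z^{-\mu}|J_\mu(z)|$ is continuous and bounded on $(0,1]$, so $|J_\mu(z)|\le C_\mu z^{\mu}$ there; from Hankel's large-argument expansion plus continuity on compacts of $[1,\infty)$, $|J_\mu(z)|\le C_\mu z^{-1/2}$ for $z\ge1$. I then prove the third inequality by splitting on the size of $z:=Xj_{\nu,n}$. If $Xj_{\nu,n}\ge1$, I bound the numerator by $C_\mu(Xj_{\nu,n})^{-1/2}$, multiply by $|J_{\nu+1}(j_{\nu,n})|^{-1}\le\pi\sqrt n$, and use $j_{\nu,n}>\tfrac{n\pi}{2}$ so that $\sqrt n/\sqrt{j_{\nu,n}}<\sqrt{2/\pi}$; this yields a bound $\le C\,X^{-1/2}$, and since $X\le1+X\pi$ one has $X^{-1/2}\le(1+X\pi)^{1/2}/X$. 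If $Xj_{\nu,n}<1$, then $X<1/j_{\nu,n}$ and I bound the numerator by $C_\mu(Xj_{\nu,n})^{\mu}$; using $\mu+1>0$ and $X<1/j_{\nu,n}$ gives $(Xj_{\nu,n})^{\mu}=X^{-1}X^{\mu+1}j_{\nu,n}^{\mu}\le X^{-1}j_{\nu,n}^{-1}$, so after multiplying by $\pi\sqrt n$ and using $j_{\nu,n}>\tfrac{n\pi}{2}$ the expression is $\le \tfrac{2C_\mu}{\sqrt n\,X}\le \widetilde{C}_\nu\,(1+X\pi)^{1/2}/X$, since $\tfrac{1}{\sqrt n}\le1\le(1+X\pi)^{1/2}$.

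The main obstacle is the third inequality, precisely because it must hold uniformly in $X>0$ while the argument $Xj_{\nu,n}$ sweeps across all of $(0,\infty)$: the small-argument regime (where $J_\mu$ can blow up like $z^{\nu}$ when $-1<\nu<0$) and the large-argument regime (where $J_\mu$ decays like $z^{-1/2}$) must both be matched to the single envelope $(1+X\pi)^{1/2}/X$, which blows up like $1/X$ as $X\downarrow0$ and decays like $X^{-1/2}$ as $X\to\infty$. Verifying that the case split at $Xj_{\nu,n}=1$ together with the bounds $\tfrac{n\pi}{2}<j_{\nu,n}<2n\pi$ and $|J_{\nu+1}(j_{\nu,n})|^{-1}\le\pi\sqrt n$ exactly reproduces this envelope, uniformly over $X$ and over $n>N_\nu$, is the delicate part of the argument.
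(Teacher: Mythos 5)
Your proposal is correct and follows essentially the same route as the paper: the first two bounds are read off from the classical asymptotics $j_{\nu,n}\sim n\pi$ and $J_{\nu+1}(j_{\nu,n})\sim(-1)^{n-1}\sqrt{2/(\pi j_{\nu,n})}$ (the paper's \eqref{Esti_zeros}), and the third from the standard envelope of $J_\mu$ combined with $\tfrac{n\pi}{2}<j_{\nu,n}<2n\pi$ and $|J_{\nu+1}(j_{\nu,n})|^{-1}\le\pi\sqrt{n}$. The only cosmetic difference is that the paper plugs directly into the single uniform bound $|J_\alpha(z)|\le C_\alpha z^{\alpha}(1+z)^{-\alpha-1/2}$ from \eqref{esti_J}, whereas you split at $Xj_{\nu,n}=1$ and use the small- and large-argument envelopes separately; both yield the same estimate.
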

\begin{proof}
According to \eqref{Esti_zeros}, we can find a natural number $N_{\nu}\geq 2$ that satisfies 
\begin{align*}
\frac{n\pi}{2}<j_{\nu, n}<2n\pi, \quad
\left|\frac{1}{J_{\nu+1}(j_{\nu, n})}\right|
\leq \pi\sqrt{n} \quad (n\geq N_{\nu}).
\end{align*}
In addition, by \eqref{esti_J}, for $n\geq N_{\nu}$, the following inequalities hold:
\begin{align*}
\left|\frac{J_{\nu}\left(X j_{\nu, n}\right)}{J_{\nu+1}(j_{\nu, n})}\right|
&\leq C_{\nu}\frac{\left(X j_{\nu, n}\right)^{\nu}}{\left(1+X j_{\nu, n}\right)^{\nu+\frac{1}{2}}} \pi \sqrt{n}
\leq C_{\nu}\frac{\left(2Xn\pi\right)^{\nu}}{\left(1+\frac{Xn\pi}{2} \right)^{\nu+\frac{1}{2}}} \pi \sqrt{n} 
\leq 2^{2\nu+1/2}C_{\nu} \frac{\left(1+X\pi \right)^{1/2}}{X}, \\
\left|\frac{J_{\nu+1}\left(X j_{\nu, n}\right)}{J_{\nu+1}(j_{\nu, n})}\right|
&\leq C_{\nu+1}\frac{\left(X j_{\nu, n}\right)^{\nu+1}}{\left(1+X j_{\nu, n}\right)^{\nu+\frac{3}{2}}}\pi \sqrt{n}
\leq C_{\nu+1}\frac{\left(2Xn\pi\right)^{\nu+1}}{\left(1+\frac{Xn\pi}{2} \right)^{\nu+\frac{3}{2}}} \pi \sqrt{n} 
\leq 2^{2\nu+5/2}C_{\nu+1} \frac{\left(1+X\pi \right)^{1/2}}{X}.
\end{align*}
\end{proof}

\begin{theorem}[\cite{bib_PitmanYor1999} $(20)$]
\label{Thm_max_dist_of_Diffusion_bridge}
Let $0\leq x, y<c$, and $t>0$, and 
let $p(t; x, y)$ be the symmetric transition density of a regular one-dimensional diffusion on $[0, \infty)$ $R=\{R(t)\}_{t\geq 0}$ relative to its speed measure. 
In addition, let $r_{[0, t]}^{x\to y}=\{r_{[0, t]}^{x\to y}(s)\}_{s\in [0, t]}$ denote an $R$-bridge of length $t$ from $x$ to $y$. 
Moreover, let $\varphi_{\lambda}^{\uparrow}$ and $\varphi_{\lambda}^{\downarrow}$ denote the increasing and decreasing solutions of $Au=\lambda u$ for $A$ the infinitesimal generator of $R$, normalized so that
\begin{equation}\label{PitmanYor_18}
\int_0^{\infty}e^{-\lambda t}p(t; x, y)dt=\varphi_{\lambda}^{\uparrow}(x)\varphi_{\lambda}^{\downarrow}(y),
\quad 0\leq x\leq y, \ \lambda>0.
\end{equation}
Then, we have 
\begin{equation}\label{PitmanYor_20}
\int_0^{\infty}e^{-\lambda t}
P\left(M_{[0, t]}(r_{[0, t]}^{x\to y})>c\right)p(t; x, y)dt
=\varphi_{\lambda}^{\uparrow}(y)\varphi_{\lambda}^{\downarrow}(c)\frac{\varphi_{\lambda}^{\uparrow}(x)}{\varphi_{\lambda}^{\uparrow}(c)}.
\end{equation}
\end{theorem}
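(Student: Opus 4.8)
The plan is to interpret the integrand probabilistically, reduce it to a first-passage decomposition at the level $c$, and then recognize the two classical Laplace transforms of the It\^{o}--McKean theory of one-dimensional diffusions. Write $P_x$ for the law of $R$ started at $x$, let $m$ denote the speed measure, and set $T_c := \inf\{s \geq 0 : R(s) = c\}$. Since the bridge $r_{[0,t]}^{x \to y}$ is $R$ conditioned on $R(t) = y$, and since $p(t;x,y)$ is the density of $P_x(R(t) \in \cdot)$ relative to $m$, the integrand $P(M_{[0,t]}(r_{[0,t]}^{x \to y}) > c)\, p(t;x,y)$ is exactly the density, relative to $m(dy)$, of the sub-probability measure $P_x(M_{[0,t]}(R) > c,\, R(t) \in \cdot)$. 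I will denote this density by $q_c(t;x,y)$.

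The first main step is to decompose $q_c$ at $T_c$. Because $0 \leq x, y < c$ and $R$ has continuous paths, the event $\{M_{[0,t]}(R) > c\}$ coincides up to a null set with $\{T_c < t\}$ (regularity of the point $c$ ensures that the diffusion strictly overshoots $c$ immediately after hitting it). Applying the strong Markov property at $T_c$ and using $R(T_c) = c$ on $\{T_c < \infty\}$, I obtain the convolution identity
\begin{align*}
q_c(t;x,y) = \int_0^t P_x(T_c \in du)\, p(t-u;c,y).
\end{align*}

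Taking the Laplace transform in $t$ and using Fubini's theorem factors this convolution as
\begin{align*}
\int_0^\infty e^{-\lambda t} q_c(t;x,y)\, dt = E_x\!\left[ e^{-\lambda T_c} \right] \int_0^\infty e^{-\lambda s} p(s;c,y)\, ds.
\end{align*}
It remains to identify the two factors. By the symmetry $p(s;c,y) = p(s;y,c)$ and the normalization \eqref{PitmanYor_18} applied with the ordering $y \leq c$, the second factor equals $\varphi_\lambda^\uparrow(y)\, \varphi_\lambda^\downarrow(c)$. For the first factor, I would invoke the standard optional-stopping argument: $e^{-\lambda s}\varphi_\lambda^\uparrow(R(s))$ is a local martingale, and since $\varphi_\lambda^\uparrow$ is increasing and bounded on $[x,c]$, stopping at $T_c$ gives $\varphi_\lambda^\uparrow(x) = \varphi_\lambda^\uparrow(c)\, E_x[e^{-\lambda T_c}]$, that is, $E_x[e^{-\lambda T_c}] = \varphi_\lambda^\uparrow(x)/\varphi_\lambda^\uparrow(c)$ for $x \leq c$. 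Multiplying the two factors yields exactly the right-hand side of \eqref{PitmanYor_20}.

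I expect the main obstacle to be making the density-level decomposition rigorous: one must verify that the sub-transition density $q_c$ really is the convolution of the first-passage law $P_x(T_c \in \cdot)$ with $p(\cdot\,;c,y)$, which requires a careful regular-conditional-probability treatment of the strong Markov property at the level of densities, together with measure-theoretic control of the difference between $\{M_{[0,t]}(R) > c\}$ and $\{T_c < t\}$. The two identities for the factors are classical and could instead be cited, so the genuine work lies in the passage-time decomposition.
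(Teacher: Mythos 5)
The paper does not prove this statement at all: it is quoted directly from Pitman and Yor (1999), equation (20), so there is no internal proof to compare against. Your argument is correct and is the standard derivation of that formula: identify $P\left(M_{[0,t]}(r_{[0,t]}^{x\to y})>c\right)p(t;x,y)$ as the density of $P_x\left(M_t(R)>c,\,R(t)\in\cdot\right)$ relative to the speed measure, decompose at the first passage time $T_c$ via the strong Markov property to get a convolution, and factor its Laplace transform using the two classical identities $\int_0^\infty e^{-\lambda s}p(s;y,c)\,ds=\varphi_\lambda^{\uparrow}(y)\varphi_\lambda^{\downarrow}(c)$ and $E_x\left[e^{-\lambda T_c}\right]=\varphi_\lambda^{\uparrow}(x)/\varphi_\lambda^{\uparrow}(c)$. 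The only delicate points --- that $\{M_{[0,t]}(R)>c\}$ and $\{T_c<t\}$ agree up to null sets for a regular diffusion with $c$ interior to the state space, and that the passage-time decomposition holds at the level of densities --- are exactly the ones you flag, and they go through routinely.
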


\begin{remark}
In the case of the BES($\delta$) process, $\varphi_{\lambda}^{\uparrow}$ and $\varphi_{\lambda}^{\downarrow}$ in Theorem~\ref{Thm_max_dist_of_Diffusion_bridge} are given as follows
(\cite[$(23)$]{bib_PitmanYor1999}):
\begin{equation}\label{PitmanYor_23}
\varphi_{\lambda}^{\uparrow}(x)=I_{\nu}\left(\sqrt{2\lambda}x\right)x^{-\nu},\quad
\varphi_{\lambda}^{\downarrow}(x)=K_{\nu}\left(\sqrt{2\lambda}x\right)x^{-\nu},
\quad x\geq 0, \ \lambda>0.
\end{equation}
\end{remark}

\begin{theorem}\label{Thm_max_dist_of_BES_bridge}
Let $c>0$ and $0 \leq s < t$. For $x, y \in (0,c)$, we have 
\begin{align*}
&P\left(M_{[s,t]}(r_{[s,t]}^{x \to y})\leq c\right)
=\frac{1}{\pi A^{(\nu)}_{t-s}(x, y)}
\sum_{n=1}^{\infty}\frac{J_{\nu}\left(xj_{\nu, n}/c\right) J_{\nu}\left(yj_{\nu, n}/c\right)}{c^2 J_{\nu+1}^2(j_{\nu, n})}\exp\left(-\frac{j_{\nu, n}^2}{2c^2}(t-s)\right).
\end{align*}
In addition, for $y\in [0,c)$, we have 
\begin{align*}
P\left(M_{[s,t]}(r_{[s,t]}^{0 \to y})\leq c\right)
&=P\left(M_{[s,t]}(r_{[s,t]}^{y \to 0})\leq c\right)\\
&=\frac{2(t-s)^{\nu+\frac{1}{2}}}{\sqrt{2\pi}n_{t-s}(y)}
\sum_{n=1}^{\infty}\left(\frac{j_{\nu, n}}{cy}\right)^{\nu}\frac{J_{\nu}\left(yj_{\nu, n}/c\right)}{c^2 J_{\nu+1}^2(j_{\nu, n})}\exp\left(-\frac{j_{\nu, n}^2}{2c^{2}}(t-s)\right).
\end{align*}
\end{theorem}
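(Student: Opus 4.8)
The plan is to pass through Laplace transforms in the time variable and invert, feeding Theorem~\ref{Thm_max_dist_of_Diffusion_bridge} in and using the Bessel estimates of Lemma~\ref{Lem_Esti_J} to control the resulting series. Throughout write $\tau=t-s$ (the law of $M_{[s,t]}(r_{[s,t]}^{x\to y})$ depends only on $\tau$, so we may take $s=0$), set $\mu=\sqrt{2\lambda}$, and assume $0<x\leq y<c$ without loss of generality, since all quantities are symmetric in $x,y$. First I would identify the symmetric transition density $p(\tau;x,y)$ of the BES($\delta$) process relative to its speed measure. Since $\mathcal{L}_{\delta}=\frac12\frac{d^2}{dx^2}+\frac{2\nu+1}{2x}\frac{d}{dx}$, the speed measure is $m(dx)=2x^{2\nu+1}dx$, and comparing with $P(R^x(\tau)\in dy)=2\pi y(y/x)^{\nu}A^{(\nu)}_{\tau}(x,y)dy$ gives $p(\tau;x,y)=\pi(xy)^{-\nu}A^{(\nu)}_{\tau}(x,y)$, which is manifestly symmetric; one checks that its Laplace transform reproduces the normalization \eqref{PitmanYor_18} with \eqref{PitmanYor_23}.

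Next I would compute the Laplace transform of the target. Conditioning on the endpoints turns the event of staying below $c$ into a bridge event, so $P(M_{[0,\tau]}(r_{[0,\tau]}^{x\to y})\leq c)\,p(\tau;x,y)=p^{c}(\tau;x,y)$, the transition density (relative to $m$) of the BES($\delta$) process killed upon reaching $c$. Subtracting the identity of Theorem~\ref{Thm_max_dist_of_Diffusion_bridge} from the free resolvent \eqref{PitmanYor_18} and inserting \eqref{PitmanYor_23}, I obtain
\[
\int_0^{\infty}e^{-\lambda\tau}p^{c}(\tau;x,y)\,d\tau
=(xy)^{-\nu}\left[I_{\nu}(\mu x)K_{\nu}(\mu y)-\frac{K_{\nu}(\mu c)}{I_{\nu}(\mu c)}I_{\nu}(\mu x)I_{\nu}(\mu y)\right]=:f(\lambda),
\]
which is exactly the Green's function of the killed process on $(0,c)$ with Dirichlet condition at $c$ and the natural (resp.\ reflecting, for $0<\delta<2$) condition at $0$.

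The heart of the argument is inverting $f$. As the resolvent of a self-adjoint operator with discrete spectrum, $f$ is meromorphic in $\lambda$ — the branch point of the individual $K_{\nu}$ terms at $\lambda=0$ cancels — with simple poles only where $I_{\nu}(\mu c)=0$, i.e.\ at $\lambda_n=-j_{\nu,n}^2/(2c^2)$. I would invert by closing the Bromwich contour to the left and summing residues, using $I_{\nu}(z)=i^{-\nu}J_{\nu}(iz)$, the relation $J_{\nu}'(j_{\nu,n})=-J_{\nu+1}(j_{\nu,n})$ at a zero, and $\frac{d}{d\lambda}I_{\nu}(\mu c)=cI_{\nu}'(\mu c)/\mu$; after simplifying the phase factors each residue produces $(xy)^{-\nu}\frac{J_{\nu}(xj_{\nu,n}/c)J_{\nu}(yj_{\nu,n}/c)}{c^2J_{\nu+1}^2(j_{\nu,n})}\exp(-j_{\nu,n}^2\tau/(2c^2))$. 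Dividing by $p(\tau;x,y)=\pi(xy)^{-\nu}A^{(\nu)}_{\tau}(x,y)$ then yields the first formula. Equivalently — and most transparently for the real coefficients — this is the Fourier--Bessel eigenexpansion $p^{c}(\tau;x,y)=\sum_n e^{-\lambda_n\tau}\psi_n(x)\psi_n(y)$ with $\psi_n(x)=(c|J_{\nu+1}(j_{\nu,n})|)^{-1}x^{-\nu}J_{\nu}(xj_{\nu,n}/c)$ and $\lambda_n=j_{\nu,n}^2/(2c^2)$, the normalization following from $\int_0^{c}xJ_{\nu}(xj_{\nu,n}/c)^2dx=\tfrac{c^2}{2}J_{\nu+1}^2(j_{\nu,n})$.

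Finally, for the boundary case I would let $x\downarrow 0$ in the first formula, using $J_{\nu}(z)\sim (z/2)^{\nu}/\Gamma(\nu+1)$ and $I_{\nu}(z)\sim (z/2)^{\nu}/\Gamma(\nu+1)$ as $z\to 0$: the common factor $x^{\nu}$ cancels between $J_{\nu}(xj_{\nu,n}/c)$ and $A^{(\nu)}_{\tau}(x,y)$, leaving the factor $(j_{\nu,n}/(cy))^{\nu}$ and the prefactor $\tfrac{2\tau^{\nu+1/2}}{\sqrt{2\pi}\,n_{\tau}(y)}$ (note $\sqrt{2\pi}/\pi=2/\sqrt{2\pi}$), and the symmetry of the bridge gives the $r^{y\to 0}$ case. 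The main obstacle is the rigorous justification of the inversion: establishing that $f$ is genuinely meromorphic, that the Bromwich integral equals the sum of residues with no contribution from the large arcs, and that the resulting series and its termwise $x\downarrow 0$ limit converge uniformly on compact $(x,y)$-sets. This is exactly where Lemma~\ref{Lem_Esti_J} enters: the bounds $n\pi/2<j_{\nu,n}<2n\pi$, $|J_{\nu+1}(j_{\nu,n})|^{-1}\leq \pi\sqrt{n}$, and the ratio estimates dominate the general term by $Cn^{\nu+1/2}\exp(-cn^2\tau)$, which legitimizes both the interchange of sum and contour integral and the passage to the limit by dominated convergence.
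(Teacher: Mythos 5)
Your proposal is correct and follows the paper's overall strategy: identify the symmetric density $p(\tau;x,y)=\pi(xy)^{-\nu}A^{(\nu)}_{\tau}(x,y)$ relative to the speed measure $2x^{2\nu+1}dx$, Laplace-transform $P(M\leq c)\,p$ via Theorem~\ref{Thm_max_dist_of_Diffusion_bridge} together with \eqref{PitmanYor_18} and \eqref{PitmanYor_23} to get the Green function of the process killed at $c$, expand it into simple poles in $\lambda$, invert term by term, and obtain the $x=0$ case by letting $x\downarrow 0$ (your constant-tracking there, including $\sqrt{2\pi}/\pi=2/\sqrt{2\pi}$, is right). The one genuine divergence is the middle step: the paper does not perform any contour integration but instead quotes the closed-form series identity \eqref{PY_161} from Pitman--Yor, which already presents the Green function as $2(xy)^{-\nu}\sum_n J_{\nu}(xj_{\nu,n}/c)J_{\nu}(yj_{\nu,n}/c)/\bigl(J_{\nu+1}^2(j_{\nu,n})(2\lambda c^2+j_{\nu,n}^2)\bigr)$; after that, only a term-by-term Laplace inversion (justified, as in your argument, by the bounds of Lemma~\ref{Lem_Esti_J}) and uniqueness of the Laplace transform are needed. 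You propose to re-derive that identity yourself via a Bromwich contour and residues at the zeros of $I_{\nu}(\sqrt{2\lambda}\,c)$ (equivalently, the Fourier--Bessel eigenexpansion of the killed semigroup). Your route is self-contained and makes the spectral structure transparent, but it carries a real extra burden that the paper sidesteps by citation: you must rigorously establish meromorphy of the resolvent in $\lambda$ (cancellation of the $\sqrt{\lambda}$ branch point), the absence of contributions from the large arcs, and the validity of the residue sum --- none of which is trivial and all of which you correctly flag as the main obstacle. The paper's version trades that analysis for reliance on a known identity; both are legitimate, and the remaining ingredients (the domination of the series by $C\exp(-c'n^2\tau)$ uniformly for fixed $x,y$, and the termwise $x\downarrow 0$ limit) coincide in the two arguments.
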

\begin{proof}
The Laplace transform for a function $f$ is denoted by $L(f)$: 
\[L(f)(\lambda):=\int_0^{\infty}e^{-\lambda s}f(s)ds\quad \lambda>0.
\]
For $0\leq x\leq y<c$, by \eqref{PitmanYor_18}, \eqref{PitmanYor_20}, and \eqref{PitmanYor_23}, we have
\begin{align*}
&L\left(P\left(M_{[0, \cdot]}(r_{[0, \cdot]}^{x\to y})\leq c\right)p(\cdot; x, y)\right)(\lambda)\\
&\quad=L\left(\left(1-P\left(M_{[0, \cdot]}(r_{[0, \cdot]}^{x\to y})>c\right)\right)p(\cdot; x, y)\right)(\lambda)\\
&\quad=L\left(p(\cdot; x, y)\right)(\lambda)-L\left(P\left(M_{[0, \cdot]}(r_{[0, \cdot]}^{x\to y})>c\right)p(\cdot; x, y)\right)(\lambda)\\
&\quad=\varphi_{\lambda}^{\uparrow}(x)\varphi_{\lambda}^{\downarrow}(y)
-\varphi_{\lambda}^{\uparrow}(y)\varphi_{\lambda}^{\downarrow}(c)\frac{\varphi_{\lambda}^{\uparrow}(x)}{\varphi_{\lambda}^{\uparrow}(c)}\\
&\quad=(xy)^{-\nu}I_{\nu}(\sqrt{2\lambda}x)
\frac{K_{\nu}(\sqrt{2\lambda}y)I_{\nu}(\sqrt{2\lambda}c)
-I_{\nu}(\sqrt{2\lambda}y)K_{\nu}(\sqrt{2\lambda}c)}{I_{\nu}(\sqrt{2\lambda}c)},\quad \lambda>0.
\end{align*}
Here, note that 
\begin{equation}\label{PY_161}
\frac{I_{\nu}(XC)}{I_{\nu}(C)}\left(I_{\nu}(C)K_{\nu}(YC)-I_{\nu}(YC)K_{\nu}(C)\right)
=2\sum_{n=1}^{\infty}\frac{J_{\nu}(Xj_{\nu, n})J_{\nu}(Yj_{\nu, n})}{J_{\nu+1}^2(j_{\nu, n})(C^2+j_{\nu, n}^2)}
\end{equation}
holds for $0\leq X\leq Y\leq 1$ and $C>0$ (\cite[$(161)$]{bib_PitmanYor1999}). 
Since we apply this equality for $C=\sqrt{2\lambda}c$, $X=x/c$, and $Y=y/c$, it follows that 
\begin{align}
&L\left(P\left(M_{[0, \cdot]}(r_{[0, \cdot]}^{x\to y})\leq c\right)p(\cdot; x, y)\right)(\lambda)
\nonumber \\
&\quad=
2(xy)^{-\nu}
\sum_{n=1}^{\infty}\frac{J_{\nu}\left(xj_{\nu, n}/c\right) J_{\nu}\left(yj_{\nu, n}/c\right)}{J_{\nu+1}^2(j_{\nu, n})(2\lambda c^2+j_{\nu, n}^2)}
\nonumber \\
&\quad=(xy)^{-\nu}
\sum_{n=1}^{\infty}\frac{J_{\nu}\left(xj_{\nu, n}/c\right) J_{\nu}\left(yj_{\nu, n}/c\right)}{c^2 J_{\nu+1}^2(j_{\nu, n})}\int_0^{\infty}
\exp\left(-\left(\lambda +\frac{j_{\nu, n}^2}{2c^2}\right)r\right)dr
\nonumber \\
&\quad=\sum_{n=1}^{\infty}
\int_0^{\infty}e^{-\lambda r}(xy)^{-\nu}
\frac{J_{\nu}\left(xj_{\nu, n}/c\right) J_{\nu}\left(yj_{\nu, n}/c\right)}{c^2 J_{\nu+1}^2(j_{\nu, n})}\exp\left(-\frac{j_{\nu, n}^2}{2c^2}r\right)dr,
\quad \lambda>0.
\label{Lap_sum_int}
\end{align}
For $n$, we set 
\[f_n(r):=\frac{J_{\nu}\left(xj_{\nu, n}/c\right) J_{\nu}\left(yj_{\nu, n}/c\right)}{c^2 J_{\nu+1}^2(j_{\nu, n})}\exp\left(-\left(\lambda +\frac{j_{\nu, n}^2}{2c^2}\right)r\right),
\quad r\geq 0.\]
Then, by Lemma~\ref{Lem_Esti_J} and \eqref{Esti_zeros}, there exist some $\widetilde{C}_{\nu}>0$ and $N_{\nu}\in\mathbb{N}$ such that 
\begin{align*}
|f_n(r)|&\leq \widetilde{C}_{\nu}^2
\frac{\sqrt{(1+\frac{x\pi}{c})(1+\frac{y\pi}{c})}}{xy}
\exp\left(-\frac{(n\pi)^2}{8c^2}r\right) 
\leq \widetilde{C}_{\nu}^2
\frac{1+\pi}{xy}
\exp\left(-\frac{(n\pi)^2}{8c^2}r\right),\quad n>N_{\nu}.
\end{align*}
Therefore, we can see that
\begin{align*}
&\sum_{n=N_{\nu}+1}^{\infty}\int_0^{\infty}|f_n(r)|dr
\leq
\widetilde{C}_{\nu}^2\frac{1+\pi}{xy}\sum_{n=N_{\nu}+1}^{\infty}\frac{8c^2}{(n\pi)^2}
<\infty
\end{align*}
holds and we can integrate term by term in \eqref{Lap_sum_int}. 
Hence, it follows that 
\begin{align*}
&L\left(P\left(M_{[0, \cdot]}(r_{[0, \cdot]}^{x\to y})\leq c\right)p(\cdot; x, y)\right)(\lambda)\\
&\quad=\int_0^{\infty}e^{-\lambda r}(xy)^{-\nu}
\sum_{n=1}^{\infty}\frac{J_{\nu}\left(xj_{\nu, n}/c\right) J_{\nu}\left(yj_{\nu, n}/c\right)}{c^2 J_{\nu+1}^2(j_{\nu, n})}\exp\left(-\frac{j_{\nu, n}^2}{2c^2}r\right)dr\\
&\quad=L\left((xy)^{-\nu}
\sum_{n=1}^{\infty}\frac{J_{\nu}\left(xj_{\nu, n}/c\right) J_{\nu}\left(yj_{\nu, n}/c\right)}{c^2 J_{\nu+1}^2(j_{\nu, n})}\exp\left(-\frac{j_{\nu, n}^2}{2c^2}(\cdot)\right)\right)(\lambda)\quad (\lambda>0).
\end{align*}
By the inverse Laplace transform of this identity, we obtain the following expression: 
\begin{align*}
P\left(M_{[0, t]}(r_{[0, t]}^{x\to y})\leq c\right)
&=\frac{(xy)^{-\nu}}{\pi(xy)^{-\nu}A^{(\nu)}_t(x, y)}
\sum_{n=1}^{\infty} 
\frac{J_{\nu}\left(xj_{\nu, n}/c\right) J_{\nu}\left(yj_{\nu, n}/c\right)}{c^2 J_{\nu+1}^2(j_{\nu, n})}
\exp\left(-\frac{j_{\nu, n}^2}{2c^2}t\right)\\
&=\frac{1}{\pi A^{(\nu)}_t(x, y)}
\sum_{n=1}^{\infty}
\frac{J_{\nu}\left(xj_{\nu, n}/c\right) J_{\nu}\left(yj_{\nu, n}/c\right)}{c^2 J_{\nu+1}^2(j_{\nu, n})}
\exp\left(-\frac{j_{\nu, n}^2}{2c^2}t\right).
\end{align*}
Because the right-hand side of \eqref{PY_161} is symmetric for $X$ and $Y$, we can see that this result holds for $0< y\leq x<c$.

Finally, for $0\leq y <c$, we can calculate the following: 
\begin{align*}
P\left(M_{[0, t]}(r_{[0, t]}^{0\to y})\leq c\right)
&=\frac{2t^{\nu+\frac{1}{2}}}{\sqrt{2\pi}n_t(y)}
\sum_{n=1}^{\infty}\left(\frac{j_{\nu, n}}{cy}\right)^{\nu}\frac{J_{\nu}\left(yj_{\nu, n}/c\right)}{c^2 J_{\nu+1}^2(j_{\nu, n})}\exp\left(-\frac{j_{\nu, n}^2}{2c^{2}}t\right).
\end{align*}
\end{proof}

\begin{prop}\label{Prop_Diff_max_dist_of_BES_bridge}
Let $\eta>0$ and $0 \leq s < t$. For $x, y \in (0,\eta)$, we have
\begin{align*}
&\frac{\partial}{\partial \eta}P\left(M_{[s,t]}(r_{[s,t]}^{x \to y})\leq \eta\right)\\
&\quad =\frac{1}{\pi A^{(\nu)}_{t-s}(x, y)}
\sum_{n=1}^{\infty}
\frac{1}{J_{\nu+1}^2(j_{\nu, n})}\exp\left(-\frac{j_{\nu, n}^2}{2\eta^2}(t-s)\right)\\
&\quad \qquad \qquad \qquad \qquad 
\times 
\left\{\left(-\frac{2\nu+2}{\eta^3}+\frac{j_{\nu, n}^2}{\eta^5}(t-s)\right)J_{\nu}\left(xj_{\nu, n}/\eta\right)J_{\nu}\left(yj_{\nu, n}/\eta\right)
\right.\\
&\quad \qquad \qquad \qquad \qquad \qquad
\left.+\frac{xj_{\nu, n}}{\eta^4}J_{\nu+1}\left(xj_{\nu, n}/\eta\right)J_{\nu}\left(yj_{\nu, n}/\eta\right)
+\frac{yj_{\nu, n}}{\eta^4}J_{\nu+1}\left(yj_{\nu, n}/\eta\right)J_{\nu}\left(xj_{\nu, n}/\eta\right) \right\},\\
&\frac{\partial}{\partial \eta}P\left(M_{[s,t]}(r_{[s,t]}^{0 \to y})\leq \eta\right)\\
&\quad
=\frac{2(t-s)^{\nu+\frac{1}{2}}}{\sqrt{2\pi }n_{t-s}(y)}
\sum_{n=1}^{\infty}
y^{-\nu}\frac{\left(j_{\nu, n}/\eta\right)^{\nu+2}}{J_{\nu+1}^2(j_{\nu, n})}
\frac{1}{\eta^{3}}\exp\left(-\frac{j_{\nu, n}^2}{2\eta^{2}}(t-s)\right)\\
&\qquad \qquad \qquad \qquad \quad \times 
\left\{\left(t-s-\frac{2\eta^2(\nu+1)}{j_{\nu, n}^2}\right)J_{\nu}\left(yj_{\nu, n}/\eta\right)
+\frac{y\eta}{ j_{\nu, n}}J_{\nu+1}\left(yj_{\nu, n}/\eta\right)\right\}.
\end{align*}
\end{prop}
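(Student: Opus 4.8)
The plan is to differentiate the explicit series representation of $P\left(M_{[s,t]}(r_{[s,t]}^{x \to y})\leq \eta\right)$ furnished by Theorem~\ref{Thm_max_dist_of_BES_bridge} (with $c$ renamed $\eta$) term by term in $\eta$. The prefactor $\frac{1}{\pi A^{(\nu)}_{t-s}(x, y)}$ (and, in the degenerate case, $\frac{2(t-s)^{\nu+1/2}}{\sqrt{2\pi}\,n_{t-s}(y)}$) carries no $\eta$-dependence, so the entire $\eta$-dependence sits inside the summands.

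First I would compute $\frac{\partial}{\partial \eta}$ of a single summand
\[
\frac{J_{\nu}\left(xj_{\nu, n}/\eta\right) J_{\nu}\left(yj_{\nu, n}/\eta\right)}{\eta^2 J_{\nu+1}^2(j_{\nu, n})}\exp\left(-\frac{j_{\nu, n}^2}{2\eta^2}(t-s)\right)
\]
by the product and chain rules, using $\frac{d}{d\eta}\eta^{-2}=-2\eta^{-3}$, the identity $\frac{d}{d\eta}\exp\left(-\frac{j_{\nu, n}^2}{2\eta^2}(t-s)\right)=\frac{j_{\nu, n}^2}{\eta^3}(t-s)\exp\left(-\frac{j_{\nu, n}^2}{2\eta^2}(t-s)\right)$, and the Bessel recurrence $J_{\nu}'(z)=-J_{\nu+1}(z)+\frac{\nu}{z}J_{\nu}(z)$, which gives $\frac{d}{d\eta}J_{\nu}(xj_{\nu, n}/\eta)=\frac{xj_{\nu, n}}{\eta^2}J_{\nu+1}(xj_{\nu, n}/\eta)-\frac{\nu}{\eta}J_{\nu}(xj_{\nu, n}/\eta)$ and the analogous expression in $y$. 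Collecting the contributions, the two $\frac{\nu}{\eta}J_{\nu}$ terms (one from each Bessel factor) combine with the $-\frac{2}{\eta^3}$ arising from differentiating $\eta^{-2}$ to produce the clean coefficient $-\frac{2\nu+2}{\eta^3}$ in front of the product $J_{\nu}(xj_{\nu, n}/\eta)J_{\nu}(yj_{\nu, n}/\eta)$, while the $-J_{\nu+1}$ parts yield the two cross terms $\frac{xj_{\nu,n}}{\eta^4}J_{\nu+1}(xj_{\nu,n}/\eta)J_{\nu}(yj_{\nu,n}/\eta)$ and $\frac{yj_{\nu,n}}{\eta^4}J_{\nu+1}(yj_{\nu,n}/\eta)J_{\nu}(xj_{\nu,n}/\eta)$. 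This reproduces the stated first formula (the commented intermediate form records precisely this computation before the $\frac{\nu\eta}{xj_{\nu,n}}J_\nu$ cancellation). The $x=0$ identity follows from the same manipulation applied to the degenerate series, now using $\frac{d}{d\eta}\eta^{-(\nu+2)}=-(\nu+2)\eta^{-(\nu+3)}$, which after collecting terms gives the coefficient $t-s-\frac{2\eta^2(\nu+1)}{j_{\nu,n}^2}$ on $J_{\nu}$ and $\frac{y\eta}{j_{\nu,n}}$ on $J_{\nu+1}$.

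The main work, and the principal obstacle, is justifying the interchange of differentiation and summation. Fix $\eta_*>\max(x,y)$ and a compact neighborhood $[\eta_0,\eta_1]\subset(\max(x,y),\infty)$ of $\eta_*$. On this interval $x/\eta$ and $y/\eta$ stay in a compact subset of $(0,1)$ bounded away from $0$, so Lemma~\ref{Lem_Esti_J} bounds each ratio $\left|J_{\nu}(xj_{\nu,n}/\eta)/J_{\nu+1}(j_{\nu,n})\right|$ and $\left|J_{\nu+1}(xj_{\nu,n}/\eta)/J_{\nu+1}(j_{\nu,n})\right|$, together with their $y$-analogues, by a constant independent of $n$ and of $\eta$. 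Combined with $\frac{n\pi}{2}<j_{\nu,n}<2n\pi$, the differentiated $n$-th summand is dominated in absolute value by $C\,n^2\exp\!\left(-\frac{n^2\pi^2(t-s)}{8\eta_1^2}\right)$ for large $n$, the polynomial factor $n^2$ coming from $j_{\nu,n}^2/\eta^5$ and the Gaussian decay from $\exp\!\left(-\frac{j_{\nu,n}^2}{2\eta^2}(t-s)\right)$. Since $t-s>0$, this dominating series converges and is uniform over $\eta\in[\eta_0,\eta_1]$, so the series of derivatives converges uniformly there; together with convergence of the original series at $\eta_*$, the standard term-by-term differentiation theorem licenses differentiating under the summation sign. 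As $\eta_*>\max(x,y)$ was arbitrary, this establishes both formulas.
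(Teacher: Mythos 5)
Your proposal is correct and follows essentially the same route as the paper: the paper likewise differentiates the series of Theorem~\ref{Thm_max_dist_of_BES_bridge} term by term, arriving at exactly the summands you compute via $J_{\nu}'(z)=-J_{\nu+1}(z)+\frac{\nu}{z}J_{\nu}(z)$, and justifies the interchange by using Lemma~\ref{Lem_Esti_J} together with $\frac{n\pi}{2}<j_{\nu,n}<2n\pi$ to produce a summable dominating series valid in a neighborhood of $\eta$. Your version is, if anything, slightly more careful in making the domination uniform over a compact interval $[\eta_0,\eta_1]$ and in invoking convergence of the undifferentiated series at one point.
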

\begin{proof}
Let $\eta>0$ and let $0<x,y<\eta$ be fixed. 
For $n$, we set 
\begin{align*}
f_n(\eta, x, y)
&=\frac{1}{J_{\nu+1}^2(j_{\nu, n})}\exp\left(-\frac{j_{\nu, n}^2}{2\eta^2}(t-s)\right)
\times 
\left\{\left(-\frac{2\nu+2}{\eta^3}+\frac{j_{\nu, n}^2}{\eta^5}(t-s)\right)J_{\nu}\left(xj_{\nu, n}/\eta\right)J_{\nu}\left(yj_{\nu, n}/\eta\right)\right.\\
&\qquad \qquad \qquad \qquad
\left.+\frac{xj_{\nu, n}}{\eta^4}J_{\nu+1}\left(xj_{\nu, n}/\eta\right)J_{\nu}\left(yj_{\nu, n}/\eta\right)
+\frac{yj_{\nu, n}}{\eta^4}J_{\nu+1}\left(yj_{\nu, n}/\eta\right)J_{\nu}\left(xj_{\nu, n}/\eta\right) \right\}.
\end{align*}
By Lemma~\ref{Lem_Esti_J}, there exist some $\widetilde{C}_{\nu}>0$ and $N_{\nu}\in\mathbb{N}$ such that 
\begin{align*}
&\left|f_n(\eta, x, y)\right|\\
&\leq \exp\left(-\frac{\pi^2(t-s)}{8 \eta^2}n^2 \right)
\times 
\left\{\left(\frac{2\nu+2}{\eta^3}+\frac{(2n\pi)^2}{\eta^5}(t-s)\right)
\eta^2\widetilde{C}_{\nu}^2\frac{\left((1+\frac{x\pi}{\eta})(1+\frac{y\pi}{\eta})\right)^{1/2}}{xy}\right.\\
&\left. \qquad \quad
+\frac{x(2n\pi)}{\eta^4}\eta\widetilde{C}_{\nu}\frac{\left(1+\frac{x\pi}{\eta}\right)^{1/2}}{x}
\cdot\eta \widetilde{C}_{\nu}\frac{\left(1+\frac{y\pi}{\eta}\right)^{1/2}}{y}
+\frac{y(2n\pi)}{\eta^4}\eta\widetilde{C}_{\nu}\frac{\left(1+\frac{y\pi}{\eta}\right)^{1/2}}{y}
\cdot\eta \widetilde{C}_{\nu}\frac{\left(1+\frac{x\pi}{\eta}\right)^{1/2}}{x}\right\} \\
&\leq \exp\left(-\frac{\pi^2(t-s)}{8 \eta^2}n^2 \right)
\times 
\left\{ \frac{2\nu+2}{\eta}+\frac{(2n\pi)^2}{\eta^3}(t-s)
+\frac{2\pi(x+y)}{\eta^2 } n
\right\}
\frac{\widetilde{C}_{\nu}^2 (1+\pi)}{xy},
\quad n>N_{\nu}.
\end{align*}
Therefore, we can differentiate term by term the first identity of Theorem~\ref{Thm_max_dist_of_BES_bridge} in some neighborhood of $\eta$. 
Similarly, for $n$, we set 
\begin{align*}
f_n(\eta, y)
&=\frac{\left(j_{\nu, n}/\eta\right)^{\nu+2}}{J_{\nu+1}^2(j_{\nu, n})}
\frac{1}{\eta^{3}}\exp\left(-\frac{j_{\nu, n}^2}{2\eta^{2}}(t-s)\right)\\
&\quad \times
\left\{\left(t-s-\frac{2\eta^2(\nu+1)}{j_{\nu, n}^2}\right)J_{\nu}\left(yj_{\nu, n}/\eta\right)
+\frac{y\eta}{ j_{\nu, n}}J_{\nu+1}\left(yj_{\nu, n}/\eta\right)\right\}.
\end{align*}
By Lemma~\ref{Lem_Esti_J}, there exist some $\widetilde{C}_{\nu}>0$ and $N_{\nu}\in\mathbb{N}$ such that 
\begin{align*}
\left|f_n(\eta, y)\right|
&\leq \frac{\sqrt{\pi (1+\pi)}}{y\eta^{\nu+4}\sqrt{2}}
\widetilde{C}_{\nu}
(2n\pi)^{\nu+\frac{5}{2}}
\left\{t-s+\frac{8\eta^2(\nu+1)}{(n\pi)^2}
+\frac{2y\eta}{n\pi}\right\}
\exp\left(-\frac{\pi^2(t-s)}{8 \eta^2}n^2 \right),
\ n>N_{\nu}.
\end{align*}
Therefore, we can differentiate term by term the second identity of Theorem~\ref{Thm_max_dist_of_BES_bridge} in some neighborhood of $\eta$. 
\end{proof}

According to Proposition~\ref{Prop_Diff_max_dist_of_BES_bridge} and Lebesgue's dominated convergence theorem, we can obtain the next corollary.
\begin{cor}\label{Cor_Diff_max_dist_of_BES_bridge}
Let $b>0$. For $0 \leq s<t$ and $y \in (0,b)$, we have 
\begin{align*}
\lim_{\eta\downarrow b}\frac{\partial}{\partial \eta}P\left(M_{[s,t]}(r_{[s,t]}^{y \to b})\leq \eta\right)
&=\frac{1}{\pi A^{(\nu)}_{t-s}(y, b)}
\sum_{n=1}^{\infty}\frac{j_{\nu, n}J_{\nu}\left(yj_{\nu, n}/b\right)}{b^3 J_{\nu+1}(j_{\nu, n})}\exp\left(-\frac{j_{\nu, n}^2}{2b^2}(t-s)\right), \\
\lim_{\eta\downarrow b}\frac{\partial}{\partial \eta}P\left(M_{[s,t]}(r_{[s,t]}^{0 \to b})\leq \eta\right)
&=\frac{2(t-s)^{\nu+\frac{1}{2}}}{\sqrt{2\pi }n_{t-s}(b)}
\sum_{n=1}^{\infty}\frac{j_{\nu, n}^{\nu+1}}{b^{2\nu+3}J_{\nu+1}(j_{\nu, n})}\exp\left(-\frac{j_{\nu, n}^2}{2b^{2}}(t-s)\right).
\end{align*}
\end{cor}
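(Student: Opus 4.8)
The plan is to start from the two explicit series in Proposition~\ref{Prop_Diff_max_dist_of_BES_bridge} and pass to the limit $\eta\downarrow b$ term by term. For the first identity I would apply Proposition~\ref{Prop_Diff_max_dist_of_BES_bridge} with the starting point $x$ replaced by $y$ and the endpoint $y$ replaced by $b$; for the second identity I would apply the $0\to y$ formula with $y$ replaced by $b$. The crucial structural observation is that $j_{\nu, n}$ is the $n$-th positive zero of $J_\nu$, so $J_\nu(j_{\nu, n})=0$. Hence, as $\eta\downarrow b$ the argument $bj_{\nu, n}/\eta$ increases to $j_{\nu, n}$, and by continuity every factor $J_\nu(bj_{\nu, n}/\eta)$ tends to $0$, while $J_{\nu+1}(bj_{\nu, n}/\eta)\to J_{\nu+1}(j_{\nu, n})$ and $J_\nu(yj_{\nu, n}/\eta)\to J_\nu(yj_{\nu, n}/b)$. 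In the first identity this annihilates both the bracketed $J_\nu(\cdot)J_\nu(\cdot)$ term and the $J_{\nu+1}(yj_{\nu, n}/\eta)J_\nu(bj_{\nu, n}/\eta)$ term, leaving only $\frac{bj_{\nu, n}}{\eta^4}J_{\nu+1}(bj_{\nu, n}/\eta)J_\nu(yj_{\nu, n}/\eta)$, whose limit is $\frac{j_{\nu, n}}{b^3}J_{\nu+1}(j_{\nu, n})J_\nu(yj_{\nu, n}/b)$. Dividing by $J_{\nu+1}^2(j_{\nu, n})$ produces exactly the claimed summand $\frac{j_{\nu, n}J_\nu(yj_{\nu, n}/b)}{b^3J_{\nu+1}(j_{\nu, n})}\exp(-j_{\nu, n}^2(t-s)/(2b^2))$. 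The same mechanism handles the second identity: the term carrying $J_\nu(bj_{\nu, n}/\eta)$ drops out, only the $J_{\nu+1}$ term survives with surviving factor $\frac{b^2}{j_{\nu, n}}J_{\nu+1}(j_{\nu, n})$, and collecting the powers of $b$ and $j_{\nu, n}$ (one obtains $b^{-2\nu-3}$ and $j_{\nu, n}^{\nu+1}$) yields the stated series. Note that the prefactors $\frac{1}{\pi A^{(\nu)}_{t-s}(y, b)}$ and $\frac{2(t-s)^{\nu+1/2}}{\sqrt{2\pi}\,n_{t-s}(b)}$ do not depend on $\eta$ and simply pass through the limit.

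To legitimise the term-by-term passage to the limit I would invoke Lebesgue's dominated convergence theorem for the counting measure on $\mathbb{N}$, applied over a one-sided neighbourhood $\eta\in(b, b+\varepsilon)$. The required majorant is already essentially contained in the proof of Proposition~\ref{Prop_Diff_max_dist_of_BES_bridge}: there the summand was bounded by an expression of the form $\exp(-\pi^2(t-s)n^2/(8\eta^2))$ times a polynomial in $n$ with $\eta$-dependent coefficients, times a constant furnished by Lemma~\ref{Lem_Esti_J}. For $\eta$ restricted to $(b, b+\varepsilon)$ I would replace each $1/\eta^k$ by $1/b^k$ and bound the Gaussian factor by its value at $\eta=b+\varepsilon$, namely $\exp(-\pi^2(t-s)n^2/(8(b+\varepsilon)^2))$. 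This produces a single $n$-summable majorant independent of $\eta$, so dominated convergence applies and the interchange of limit and sum is justified.

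The main obstacle — and the only genuinely technical point — is precisely this uniform domination near $\eta=b$: one must verify that the constants and polynomial factors coming from Lemma~\ref{Lem_Esti_J} stay bounded as $\eta\downarrow b$ and that the Gaussian decay in $n$ is preserved when $\eta$ ranges over $(b, b+\varepsilon)$. Since $b>0$ is fixed and the exponent $-\pi^2(t-s)n^2/(8\eta^2)$ is monotone in $\eta$, taking the supremum over the neighbourhood is harmless, and the resulting bound remains of the form (polynomial in $n$) $\times$ (Gaussian in $n$), which is summable. Once this majorant is in hand, the pointwise limits computed above deliver both displayed identities, completing the proof.
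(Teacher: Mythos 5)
Your proposal is correct and follows exactly the route the paper takes: the paper derives the corollary by substituting into the series of Proposition~\ref{Prop_Diff_max_dist_of_BES_bridge}, using $J_{\nu}(j_{\nu,n})=0$ to kill all but the surviving $J_{\nu+1}$ term, and justifying the termwise limit $\eta\downarrow b$ by dominated convergence with the majorants already established there. Your bookkeeping of the surviving factors ($\frac{j_{\nu,n}}{b^{3}}J_{\nu+1}(j_{\nu,n})J_{\nu}(yj_{\nu,n}/b)$ and $\frac{j_{\nu,n}^{\nu+1}}{b^{2\nu+3}}J_{\nu+1}(j_{\nu,n})$, respectively) and your uniform bound on $\eta\in(b,b+\varepsilon)$ are both accurate.
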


\subsection{Some inequalities}

We prepare the following inequalities: 
\begin{lem}\label{Lem_Esti_q_first}
Let $b>0$. There exists some $C_{\nu, b}>0$ such that 
\begin{align}
&
q^{(b+\eta)}_1(s, x, t, y) 
\leq \frac{C_{\nu, b}}{(t-s)^{\nu+1}}n_{t-s}(y-x),
\quad 0\leq s<t\leq 1, \ x, y\in [0, b+\eta), \ \eta\in (0, 1],
\label{Lem_2_2_eq1}\\
&
q^{(b+\eta)}_1(r, z, 1, b) 
\leq 
\frac{C_{\nu, b}}{(1-r)^{\nu+1}}
\left(1\wedge \frac{2\eta(b+\eta)}{1-r}\right) n_{1-r}(z-b),
\quad 0<r<1, \ z\in (0, b+\eta), \ \eta\in (0, 1].
\label{Lem_2_2_eq2}
\end{align}
\end{lem}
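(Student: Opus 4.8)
The plan is to prove the two bounds in turn, treating \eqref{Lem_2_2_eq1} as the basic sub-density estimate and deriving \eqref{Lem_2_2_eq2} from it together with a sharpened estimate of the bridge's maximum. For \eqref{Lem_2_2_eq1}, I would first bound $P(M_{[s,t]}(r_{[s,t]}^{x\to y})\leq b+\eta)\leq 1$, so that it suffices to estimate the transition density $q^{(b+\eta)}_1(s,x,t,y)\leq 2\pi y(y/x)^{\nu}A^{(\nu)}_{t-s}(x,y)$. Writing $\tau:=t-s$ and $w:=xy/\tau$, the explicit form $A^{(\nu)}_\tau(x,y)=n_\tau(x)n_\tau(y)I_\nu(w)$ gives $n_\tau(x)n_\tau(y)=\tfrac{1}{2\pi\tau}\exp(-(x^2+y^2)/2\tau)$, and the algebraic identity $-(x^2+y^2)/2\tau+w=-(y-x)^2/2\tau$ lets me pull out exactly $\exp(-(y-x)^2/2\tau)=\sqrt{2\pi\tau}\,n_\tau(y-x)$. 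The whole estimate thus collapses to the requirement that $y(y/x)^{\nu}\tau^{\nu+1/2}\,I_\nu(w)e^{-w}$ be bounded by a constant on the range $0\leq x,y<b+\eta\leq b+1$, $0<\tau\leq 1$.

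For this I would invoke the standard uniform estimate for the modified Bessel function, $I_\nu(w)e^{-w}\leq C_\nu\,w^{\nu}(1+w)^{-(\nu+1/2)}$ for $w>0$ (the $I_\nu$-analogue of the $J_\nu$-bounds used in Lemma~\ref{Lem_Esti_J}). Combining the factors $(y/x)^{\nu}$ and $w^{\nu}=(xy/\tau)^{\nu}$ into $y^{2\nu}\tau^{-\nu}$ reduces everything to the elementary inequality $y^{2\nu+1}\tau^{1/2}(1+w)^{-(\nu+1/2)}\leq C_{\nu,b}$. When $\nu\geq-1/2$ (that is $\delta\geq 1$) this is immediate: $2\nu+1\geq 0$ gives $y^{2\nu+1}\leq(b+1)^{2\nu+1}$, the factor $(1+w)^{-(\nu+1/2)}\leq 1$, and $\tau^{1/2}\leq 1$, which is even stronger than the claimed $\tau^{-(\nu+1)}$ since $\tau\leq 1$.

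The delicate regime, and the main obstacle for \eqref{Lem_2_2_eq1}, is $-1<\nu<-1/2$ (that is $0<\delta<1$). There the reflecting Bessel sub-density is genuinely singular, $q^{(b+\eta)}_1\sim y^{1+2\nu}$ as $y\downarrow 0$, so the factor $y^{2\nu+1}$ works against us and, simultaneously, $(1+w)^{-(\nu+1/2)}=(1+w)^{\lvert\nu+1/2\rvert}$ is now an increasing factor; the estimate is tightest in the corner $y,\tau\downarrow 0$. I would split according to $w\leq 1$ versus $w\geq 1$ and track the competition between $y^{2\nu+1}$, the growth of $(1+w)^{\lvert\nu+1/2\rvert}$, and the lower bound on $y$ forced by $w=xy/\tau\geq 1$ (with $x<b+1$); this is the step that requires the most care, and it is where one must check whether the stated constant suffices or whether the decay of the probability factor near the boundary must be retained.

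For \eqref{Lem_2_2_eq2}, the specialization $s=r$, $t=1$, $x=z$, $y=b$ of \eqref{Lem_2_2_eq1} already yields the factor $1$ in the minimum, so it remains to establish the sharpened estimate $P(M_{[r,1]}(r_{[r,1]}^{z\to b})\leq b+\eta)\leq C\,\eta(b+\eta)/(1-r)$ uniformly in $z\in(0,b+\eta)$, and to pair it with the density bound of \eqref{Lem_2_2_eq1}. Since the bridge terminates at $b$ we have $P(M_{[r,1]}(r_{[r,1]}^{z\to b})\leq b)=0$, so the mean value theorem in the barrier variable gives $P(M_{[r,1]}(r_{[r,1]}^{z\to b})\leq b+\eta)=\int_b^{b+\eta}\frac{\partial}{\partial c}P(M_{[r,1]}(r_{[r,1]}^{z\to b})\leq c)\,dc$, and I would bound the integrand uniformly for $c\in[b,b+\eta]$ via Proposition~\ref{Prop_Diff_max_dist_of_BES_bridge} and Corollary~\ref{Cor_Diff_max_dist_of_BES_bridge}. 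The linear factor $\eta$ then arises from the length of the interval together with the first-order vanishing of $J_\nu(bj_{\nu,n}/c)$ as $c\downarrow b$ (since $j_{\nu,n}$ is a zero of $J_\nu$, one has $J_\nu(bj_{\nu,n}/c)\approx\frac{(c-b)j_{\nu,n}}{c}J_{\nu+1}(j_{\nu,n})$), while the factor $(b+\eta)/(1-r)$ comes from the Gaussian-type sum $\sum_n j_{\nu,n}(\cdots)\exp(-j_{\nu,n}^2(1-r)/2c^2)$, which I would compare to an integral to extract the power $(1-r)^{-1}$. The main obstacle here is uniformity in the starting point $z$, in particular as $z\downarrow 0$ where $A^{(\nu)}_{1-r}(z,b)\to 0$: I would control the Fourier--Bessel series by Lemma~\ref{Lem_Esti_J}, using that the factor $1/A^{(\nu)}_{1-r}(z,b)$ in $\frac{\partial}{\partial c}P$ cancels against the $A^{(\nu)}_{1-r}(z,b)$ in the density $2\pi b(b/z)^{\nu}A^{(\nu)}_{1-r}(z,b)$ upon recombination, leaving a clean series to sum.
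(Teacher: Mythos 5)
For \eqref{Lem_2_2_eq1} your route coincides with the paper's: drop the probability that the bridge stays below $b+\eta$, apply the uniform bound \eqref{esti_I} for $w^{-\nu}I_{\nu}(w)$, and complete the square to extract $n_{t-s}(y-x)$; what remains is exactly the quantity $y^{2\nu+1}(1+w)^{-(\nu+1/2)}(t-s)^{1/2}$ you isolate. You are right to flag the regime $-1<\nu<-\tfrac12$ as the sticking point: there $y^{2\nu+1}\to\infty$ as $y\downarrow 0$ while the right-hand side of \eqref{Lem_2_2_eq1} stays bounded in $y$, so no case split on $w\lessgtr 1$ can rescue the estimate in that corner. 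The paper's own proof does not address this either -- it passes directly to the constant $\sqrt{2\pi}C_{\nu}(b+1)^{2\nu+1}\sqrt{1+(b+1)^2}$, which only works for $\nu\ge -\tfrac12$. So on this point your proposal is no further from a complete argument than the paper's, but it is not complete: you leave the case $0<\delta<1$ open rather than proving it or noting that the claim must be modified there.

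The genuine gap is in your plan for \eqref{Lem_2_2_eq2}. You propose to produce the factor $1\wedge\frac{2\eta(b+\eta)}{1-r}$ by writing $P\bigl(M_{[r,1]}(r_{[r,1]}^{z\to b})\le b+\eta\bigr)=\int_b^{b+\eta}\frac{\partial}{\partial c}P\bigl(M_{[r,1]}(r_{[r,1]}^{z\to b})\le c\bigr)dc$ and then bounding the derivative series term by term via Lemma~\ref{Lem_Esti_J}. Term-by-term absolute bounds on the Fourier--Bessel series yield at best $\sum_n n^{p}e^{-cn^2(1-r)}\lesssim (1-r)^{-q}$: they discard the cancellation in the oscillating sum and therefore cannot recover the off-diagonal Gaussian factor $n_{1-r}(z-b)$ required on the right-hand side of \eqref{Lem_2_2_eq2} -- your ``clean series to sum'' is bounded uniformly in $z$, not by $e^{-(z-b)^2/(2(1-r))}$. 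The paper avoids this entirely: it recombines the transition density and the maximum probability into the single killed-heat-kernel series $2(xy)^{-\nu}\sum_n J_{\nu}(j_{\nu,n}x)J_{\nu}(j_{\nu,n}y)J_{\nu+1}^{-2}(j_{\nu,n})e^{-j_{\nu,n}^2 t}$ and invokes the sharp upper bound of \cite{bib_FourierBessel}, which contains simultaneously the boundary-decay factor $1\wedge\frac{(1-x)(1-y)}{t}$ (whence the factor $\eta$, via $1-y=\eta/(b+\eta)$) and the Gaussian $e^{-(x-y)^2/(4t)}$. Without importing such a two-sided heat-kernel estimate, your argument for \eqref{Lem_2_2_eq2} cannot close.
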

\begin{proof}
First, we prove inequality \eqref{Lem_2_2_eq1}. 
By \eqref{esti_I}, there exists some $C_{\nu}>0$ such that 
\[\left(\frac{xy}{t-s}\right)^{-\nu}I_{\nu}\left(\frac{xy}{t-s}\right)
\leq \frac{C_{\nu}}{(1+\frac{xy}{t-s})^{\nu+\frac{1}{2}}}\exp\left(\frac{xy}{t-s}\right).\]
Thus, by this inequality, 
it follows that 
\begin{align*}
q^{(b+\eta)}_1(s, x, t, y)
&\leq \frac{P\left(R^x(t-s)\in dy\right)}{dy}\\
&\leq 2\pi y^{1+\nu}\left(\frac{y}{t-s}\right)^{\nu}n_{t-s}(x) n_{t-s}(y)
\frac{C_{\nu}}{(1+\frac{xy}{t-s})^{\nu+\frac{1}{2}}}\exp \left(\frac{xy}{t-s}\right) \\
&\leq\frac{\widehat{C}_{\nu, b}}{(t-s)^{\nu+1}}n_{t-s}(y-x), 
\end{align*}
where $\widehat{C}_{\nu, b}:=\sqrt{2\pi}C_{\nu}(b+1)^{2\nu+1}\sqrt{1+(b+1)^2}$.
Next, we prove inequality \eqref{Lem_2_2_eq2}. 
According to \cite{bib_FourierBessel}, there exists some $C'_{\nu}>0$ such that 
\begin{align*}
&2(xy)^{-\nu}\sum_{n=1}^{\infty}\frac{J_{\nu}(j_{\nu, n}x)J_{\nu}(j_{\nu,
 n}y)}{J_{\nu+1}^2(j_{\nu, n})}\exp\left(-j_{\nu, n}^2t\right)\\
&\leq C'_{\nu}\frac{(1+t)^{\nu+2}}{(t+xy)^{\nu+1/2}}
\left(1\wedge \frac{(1-x)(1-y)}{t}\right)
\frac{1}{\sqrt{t}}
\exp \left( -\frac{(x-y)^2}{4t}-j_{\nu, 1}^2t\right), 
\quad x, y \in (0,1), \ t>0.
\end{align*}
Using this inequality and Theorem~\ref{Thm_max_dist_of_BES_bridge}, 
we can obtain the following estimate:
\begin{align*}
&q^{(b+\eta)}_1(r, z, 1, b)\\
&\quad=\frac{P\left(R^z(1-r)\in db\right)}{db}
P\left(M_{[r,1]}(r_{[r,1]}^{z\to b})\leq b+\eta\right) \\
&\quad=\frac{b^{2\nu+1}}{(b+\eta)^{2\nu+2}}2\left(\frac{z}{b+\eta}\frac{b}{b+\eta}\right)^{-\nu}
\sum_{n=1}^{\infty}
\frac{J_{\nu}\left(zj_{\nu, n}/(b+\eta)\right) J_{\nu}\left(bj_{\nu, n}/(b+\eta)\right)}{J_{\nu+1}^2(j_{\nu, n})}
\exp\left(-j_{\nu, n}^2\frac{1-r}{2(b+\eta)^2}\right)\\
&\quad \leq 
C'_{\nu}\frac{b^{2\nu+1}}{(b+\eta)^{2\nu+2}}
\left(1+\frac{1-r}{2(b+\eta)^2}\right)^{\nu+2}
\frac{(\frac{(1-r)+2bz}{2(b+\eta)^2})^{1/2}}{(\frac{(1-r)+2bz}{2(b+\eta)^2})^{\nu+1}}
\left(1\wedge \frac{2\eta(b+\eta-z)}{1-r}\right)\\
&\qquad \times
\exp\left(-j_{\nu, 1}^2\frac{1-r}{2(b+\eta)^2}\right)
2\sqrt{\pi}(b+\eta)n_{1-r}(z-b)\\
&\quad \leq 
C'_{\nu}
\frac{b^{2\nu+1}}{(b+\eta)^{2\nu+2}}
\left(\frac{2(b+1)^2+1}{2(b+\eta)^2}\right)^{\nu+2}
\frac{(\frac{1+2(b+1)^2}{2(b+\eta)^2})^{1/2}}{(\frac{1-r}{2(b+\eta)^2})^{\nu+1}}
\left(1\wedge \frac{2\eta(b+\eta)}{1-r}\right)
2\sqrt{\pi}(b+\eta)n_{1-r}(z-b)\\
&\quad \leq 
\frac{C'_{\nu, b}}{(1-r)^{\nu+1}}
\left(1\wedge \frac{2\eta(b+\eta)}{1-r}\right)
n_{1-r}(z-b),
\end{align*}
where $C'_{\nu, b}:=C'_{\nu}
\sqrt{\frac{\pi}{2}}
\frac{(1+2(b+1)^2)^{\nu+5/2}}{b^3}$. 
Since we set $C_{\nu, b}:=\widehat{C}_{\nu, b}\vee C'_{\nu, b}$, we can obtain our assertions.
\end{proof}

\begin{lem}\label{Lem_q2_is_positive}
Let $b>0$. For $0<t\leq1$ and $y\in[0, b)$, we have 
\[
q^{(b)}_2(t,y)>0.\]
\end{lem}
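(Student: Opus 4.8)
The plan is to reduce the statement to the strict positivity of the first--passage density of the Bessel process and then settle that by a maximum--principle argument. Writing $\eta=b+\varepsilon$, the definition of $q_2^{(b)}$ gives
\[
q_2^{(b)}(t,y)=\frac{P(R^y(t)\in db)}{db}\,h(t,y),\qquad h(t,y):=\lim_{\eta\downarrow b}\frac{\partial}{\partial\eta}P\!\left(M_{[0,t]}(r_{[0,t]}^{y\to b})\leq\eta\right),
\]
and $h(t,y)$ is exactly the limit computed in Corollary~\ref{Cor_Diff_max_dist_of_BES_bridge} (with $s=0$). The prefactor is strictly positive for $t\in(0,1]$ and $y\in[0,b)$ straight from the transition density: for $y>0$ it equals $2\pi b(b/y)^{\nu}A^{(\nu)}_t(y,b)>0$, and for $y=0$ the density from the origin is again positive. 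Hence it suffices to prove $h(t,y)>0$.

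First I would identify $h$ as a first--passage density. Let $p(t;y,b)$ be the symmetric transition density relative to the speed measure, as in Theorem~\ref{Thm_max_dist_of_Diffusion_bridge}. Starting from the resolvent identity in the proof of Theorem~\ref{Thm_max_dist_of_BES_bridge},
\[
L\!\left(P(M_{[0,\cdot]}(r^{y\to b})\leq c)\,p(\cdot;y,b)\right)(\lambda)=(yb)^{-\nu}I_{\nu}(\sqrt{2\lambda}\,y)\,\frac{K_{\nu}(\sqrt{2\lambda}\,b)I_{\nu}(\sqrt{2\lambda}\,c)-I_{\nu}(\sqrt{2\lambda}\,b)K_{\nu}(\sqrt{2\lambda}\,c)}{I_{\nu}(\sqrt{2\lambda}\,c)},
\]
I differentiate in the level $c$ and let $c\downarrow b$. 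The bracketed quotient vanishes at $c=b$, and its $c$--derivative there is evaluated by the Wronskian relation $I_{\nu}(z)K_{\nu}'(z)-I_{\nu}'(z)K_{\nu}(z)=-1/z$, which collapses it to $\tfrac{1}{b\,I_{\nu}(\sqrt{2\lambda}\,b)}$. Using \eqref{PitmanYor_23} to recognize $\varphi^{\uparrow}_{\lambda}(y)/\varphi^{\uparrow}_{\lambda}(b)=(y/b)^{-\nu}I_{\nu}(\sqrt{2\lambda}\,y)/I_{\nu}(\sqrt{2\lambda}\,b)=E[e^{-\lambda\tau_{y,b}}]$, I obtain
\[
L\!\left(h(\cdot,y)\,p(\cdot;y,b)\right)(\lambda)=b^{-2\nu-1}\,E\!\left[e^{-\lambda\tau_{y,b}}\right].
\]
Inverting by uniqueness of the Laplace transform yields $h(t,y)\,p(t;y,b)=b^{-2\nu-1}\,\tfrac{P(\tau_{y,b}\in dt)}{dt}$. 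The interchange of $\partial_c$ with the transform is justified by the Bessel bounds of Lemma~\ref{Lem_Esti_J} or, equivalently, by summing the series of Corollary~\ref{Cor_Diff_max_dist_of_BES_bridge} term by term, both giving the same Mittag--Leffler expansion of the right--hand side.

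Since $p(t;y,b)>0$, everything reduces to showing that the first--passage density $\tfrac{P(\tau_{y,b}\in dt)}{dt}$ is strictly positive for every $t\in(0,1]$. This is the main obstacle: nonnegativity and almost--everywhere positivity are immediate (by Girsanov on any compact subinterval of $(0,\infty)$ the law of $\tau_{y,b}$ is equivalent to the everywhere--positive Brownian first--passage law), but excluding isolated interior zeros of the continuous density requires more than absolute continuity or analyticity. I would argue through the Dirichlet heat kernel $q_b(t;y,\cdot)$ of the Bessel process killed at $b$: it is strictly positive on $(0,b)$ and vanishes at $b$ for each $t>0$, and $\tfrac{P(\tau_{y,b}\in dt)}{dt}$ is a positive multiple of its inward boundary flux at $b$. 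By Hopf's boundary point lemma for the parabolic operator $\partial_t-\mathcal{L}_{\delta}$ this flux is strictly negative, so the first--passage density is strictly positive for all $t>0$; hence $h(t,y)>0$ and $q_2^{(b)}(t,y)>0$. The boundary case $y=0$ is treated identically, using the second formula of Corollary~\ref{Cor_Diff_max_dist_of_BES_bridge} together with the small--argument limit $\varphi^{\uparrow}_{\lambda}(0)=\lim_{y\downarrow 0}y^{-\nu}I_{\nu}(\sqrt{2\lambda}\,y)$, the nature of the boundary at $0$ (whether $0<\delta<2$ or $\delta\geq2$) being irrelevant near the level $b$.
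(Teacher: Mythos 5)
Your argument is correct in outline but follows a genuinely different route from the paper. The paper disposes of the lemma in a few lines: it plugs the explicit Fourier--Bessel series for $q^{(b)}_2(t,y)$ from Corollary~\ref{Cor_Diff_max_dist_of_BES_bridge} into the quantitative lower bound of \cite[Theorem~3.3]{bib_Serafin}, which bounds that exact alternating series from below by a manifestly positive expression. You instead factor $q^{(b)}_2$ into the (positive) transition density times the boundary derivative $h$, identify $h(t,y)\,p(t;y,b)$ with a constant multiple of the first-passage density of $\tau_{y,b}$ via the resolvent identity, the Wronskian $I_{\nu}(z)K_{\nu+1}(z)+I_{\nu+1}(z)K_{\nu}(z)=1/z$, and $\varphi^{\uparrow}_{\lambda}(y)/\varphi^{\uparrow}_{\lambda}(b)=E[e^{-\lambda\tau_{y,b}}]$, and then prove strict positivity of that density by a parabolic Hopf boundary-point argument for the killed Bessel semigroup. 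Your intermediate identity is exactly equivalent to the paper's Lemma~\ref{Lem_relation_fht_q} (which states $P(\tau_{y,b}\in dt)/dt=q^{(b)}_2(t,y)/2$ and is proved there later, by a Green's-identity computation rather than your Laplace-transform route), so in effect you anticipate Section~\ref{section_const_moving} and then add a positivity statement for the hitting density. What your approach buys is a structural, estimate-free explanation of \emph{why} the lemma is true (positivity of a first-passage density), applicable to general regular diffusions; what it costs is reliance on two standard but nontrivial external facts that you assert rather than prove --- strict interior positivity of the Dirichlet heat kernel of the killed process for every $t>0$, and the parabolic Hopf lemma at the level $b$ --- both of which hold here (the coefficients of $\mathcal{L}_{\delta}$ are smooth near $x=b>0$) but would need citations, whereas the paper's proof is self-contained given Serafin's bound. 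You correctly identify that almost-everywhere positivity is not enough and that excluding interior zeros is the real issue; that is precisely the point the quantitative bound in the paper settles for free.
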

\begin{proof}
According to \cite[Theorem 3.3]{bib_Serafin}, for all $x\in[0, 1)$ and $t>0$, there exists a constant $\ddot{C}_{\nu}>0$ such that 
\[
x^{-\nu}\sum_{n=1}^\infty \frac{j_{\nu, n}J_{\nu}(j_{\nu, n}x)}{J_{\nu+1}(j_{\nu, n})}\exp\left(-\frac{j_{\nu, n}^2}{2}t\right)
\geq \ddot{C}_{\nu}\frac{(1-x)(1+t)^{\nu+2}}{(x+t)^{\nu+\frac{1}{2}}t^{\frac{3}{2}}}
\exp\left(-\frac{(1-x)^2}{2t}-\frac{1}{2}j_{\nu, 1}^2t\right).\]
Hence, by 
Corollary~\ref{Cor_Diff_max_dist_of_BES_bridge}, 
we can prove the assertion as follows: 
\begin{align*}
q^{(b)}_2(t, y)
&=
2\left(\frac{b}{y}\right)^{\nu}
\sum_{n=1}^{\infty}\frac{j_{\nu, n}J_{\nu}\left(yj_{\nu, n}/b\right)}{b^2 J_{\nu+1}(j_{\nu, n})}\exp\left(-\frac{j_{\nu, n}^2}{2b^2}t\right)\\
&\geq \ddot{C}_{\nu} \frac{2}{b^2}
\frac{(1-\frac{y}{b})(1+\frac{t}{b^2})^{\nu+2}}{(\frac{y}{b}+\frac{t}{b^2})^{\nu+\frac{1}{2}}(\frac{t}{b^2})^{\frac{3}{2}}}
\exp\left(-\frac{(b-y)^2}{2t}-\frac{j_{\nu, 1}^2}{2b^2}t\right)>0.
\end{align*}
\end{proof}

\section{Proof of Theorem~\ref{Thm_MainResult2}}\label{section_const_weakconv}

In this section, we prove Theorem~\ref{Thm_MainResult2}, which gives the construction of the Bessel house-moving as 
the weak limit of the conditioned BES($\delta$) bridges. 

\begin{lem}\label{Lem_Densities_Sequences}
Let $0\leq a<b$ and $\eta>0$. For $0 < s < t < 1$ and $x, y \in (0,b+\eta)$, we have
\begin{align}
&P\left( r^{a \to b}|_{K^-(b+\eta)}(t) \in dy \right)
=\frac{q^{(b+\eta)}_1(0, a, t, y)q^{(b+\eta)}_1(t, y, 1, b)}{q^{(b+\eta)}_1(0, a, 1, b)}dy, 
\label{sequence_density_starting_a} \\
&P\left( r^{a \to b}|_{K^-(b+\eta)}(t) \in dy~|~r^{a \to b}|_{K^-(b+\eta)}(s) = x \right) 
=\frac{q^{(b+\eta)}_1(s, x, t, y)q^{(b+\eta)}_1(t, y, 1, b)}{q^{(b+\eta)}_1(s, x, 1, b)}dy.
\label{sequence_transdensity} 
\end{align}
\end{lem}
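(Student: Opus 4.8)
The plan is to identify $r^{a\to b}|_{K^-(b+\eta)}$ as the bridge $r^{a\to b}$ conditioned on the event that its path stays below $b+\eta$, and then to read its one- and two-dimensional densities directly off the factorizations in Lemma~\ref{Lem_Joint_Dist_BES_bridge}. Writing $c:=b+\eta$, observe that $\{r^{a\to b}\in K^-(c)\}=\{M(r^{a\to b})\leq c\}$, and this event has positive probability since the endpoints $a,b$ lie strictly below $c$, so the conditional law is well defined. By the definition of $P_{Y^{-1}(\Lambda)}$ with $Y=r^{a\to b}$ and $\Lambda=K^-(c)$, the finite-dimensional densities of the conditioned process are simply the corresponding densities of $r^{a\to b}$ restricted to $\{M(r^{a\to b})\leq c\}$, divided by the normalizing constant $P(M(r^{a\to b})\leq c)$.

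For the marginal identity \eqref{sequence_density_starting_a}, I would begin from
\[
P\left(r^{a\to b}|_{K^-(c)}(t)\in dy\right)
=\frac{P\left(r^{a\to b}(t)\in dy,\,M(r^{a\to b})\leq c\right)}{P\left(M(r^{a\to b})\leq c\right)}
\]
and apply \eqref{Joint_Dist_BES_bridge1} to the numerator. The key step is to recognize each factor through the definition
\[
q^{(c)}_1(s,x,t,y)=\frac{P\left(R^x(t-s)\in dy\right)}{dy}\,P\left(M_{[s,t]}(r_{[s,t]}^{x\to y})\leq c\right).
\]
Combining the bridge density \eqref{BESbridge_density} with the maximum probabilities $P(M_{[0,t]}(r_{[0,t]}^{a\to y})\leq c)$ and $P(M_{[t,1]}(r_{[t,1]}^{y\to b})\leq c)$ reassembles the numerator into $q^{(c)}_1(0,a,t,y)\,q^{(c)}_1(t,y,1,b)\,dy$ (modulo the factor $P(R^a(1)\in db)$), while the denominator $P(M(r^{a\to b})\leq c)$ together with $P(R^a(1)\in db)$ reproduces $q^{(c)}_1(0,a,1,b)$; taking the quotient yields the claimed formula.

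For the transition identity \eqref{sequence_transdensity}, I would write it as the ratio of the two-point density at $(s,t)$ to the one-point density at $s$, applying \eqref{Joint_Dist_BES_bridge2} to the numerator and \eqref{Joint_Dist_BES_bridge1} (with $s$ in place of $t$) to the denominator. In the resulting quotient the common factor $P(M_{[0,s]}(r_{[0,s]}^{a\to x})\leq c)$ cancels, as does the normalizing constant $P(M(r^{a\to b})\leq c)$. Rewriting the surviving ratio of bridge densities through the bridge transition density \eqref{BESbridge_transdensity}, the remaining maximum probabilities and Bessel transition densities recombine, via the definition of $q^{(c)}_1$, into $q^{(c)}_1(s,x,t,y)\,q^{(c)}_1(t,y,1,b)/q^{(c)}_1(s,x,1,b)$.

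This argument is essentially bookkeeping rather than analysis, so I do not anticipate a genuine analytic obstacle. The only real care is to match factors correctly against the definition of $q^{(c)}_1$ and to ensure that the Bessel transition densities and bridge densities recombine through \eqref{BESbridge_density} and \eqref{BESbridge_transdensity}. I expect the main subtlety to lie in making the cancellations in the transition-density computation transparent and in keeping track of the infinitesimals $dx$, $dy$, $db$ so that the resulting expressions are genuine (conditional) densities.
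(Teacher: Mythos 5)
Your proposal is correct and follows essentially the same route as the paper: both condition on $\{M(r^{a\to b})\leq b+\eta\}$, apply the factorizations \eqref{Joint_Dist_BES_bridge1} and \eqref{Joint_Dist_BES_bridge2} of Lemma~\ref{Lem_Joint_Dist_BES_bridge}, and reassemble the maximum probabilities with the bridge densities \eqref{BESbridge_density} and \eqref{BESbridge_transdensity} into the $q^{(b+\eta)}_1$ factors, with the same cancellations of $P(M_{[0,s]}(r_{[0,s]}^{a\to x})\leq b+\eta)$ and the normalizing constant in the transition-density computation.
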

\begin{proof}
By Lemma~\ref{Lem_Joint_Dist_BES_bridge}, we obtain
\begin{align}
&P\left(r^{a \to b}|_{K^-(b+\eta)}(u) \in dy\right)\nonumber \\
&\quad =\frac{P\left(r^{a \to b}(u) \in dy, M(r^{a \to b})\leq b+\eta\right)}{P\left(M(r^{a \to b}) \leq b+\eta\right)}\nonumber \\
&\quad =\frac{P\left(M_{[0, u]}(r_{[0, u]}^{a \to y})\leq b+\eta\right) P\left(M_{[u, 1]}(r_{[u, 1]}^{y \to b})\leq b+\eta\right)}
{P\left(M(r^{a \to b})\leq b+\eta\right)}
\times P\left(r^{a \to b}(u) \in dy\right), \quad 0<u<1.
\label{CondBESbridge_density}
\end{align}

It holds from \eqref{CondBESbridge_density} and \eqref{BESbridge_density} that
\begin{align*}
&P\left(r^{a \to b}|_{K^-(b+\eta)}(t) \in dy\right)\\
&\quad =\frac{P\left(M_{[0, t]}(r_{[0, t]}^{a \to y})\leq b+\eta\right) P\left(M_{[t, 1]}(r_{[t, 1]}^{y \to b})\leq b+\eta\right)}
{P\left(M(r^{a \to b})\leq b+\eta\right)}
\times \frac{P\left(R^a(t)\in dy\right)P\left(R^y(1-t)\in db\right)}{P\left(R^a(1)\in db\right)}\\
&\quad =\frac{q^{(b+\eta)}_1(0, a, t, y)q^{(b+\eta)}_1(t, y, 1, b) }
{q^{(b+\eta)}_1(0, a, 1, b)}dy.
\end{align*}
Hence, \eqref{sequence_density_starting_a} holds.

Next, we prove \eqref{sequence_transdensity}. By Lemma~\ref{Lem_Joint_Dist_BES_bridge}, we have
\begin{align}
&P\left( r^{a \to b}|_{K^-(b+\eta)}(t) \in dy, r^{a \to b}|_{K^-(b+\eta)}(s) \in dx \right)\notag\\
&\quad=
\frac{P\left(r^{a\to b}(t) \in dy, r^{a\to b}(s) \in dx, M(r^{a\to b})\leq b+\eta\right)}{P\left(M(r^{a\to b})\leq b+\eta\right)}\notag\\
&\quad=
\frac{P\left( M_{[s,t]}(r_{[s,t]}^{x \to y})\leq b+\eta \right) 
P\left( M_{[t, 1]}(r_{[t,1]}^{y \to b})\leq b+\eta \right)P\left( M_{[0, s]}(r_{[0,s]}^{a \to x})\leq b+\eta \right)}
{P\left(M(r^{a\to b})\leq b+\eta\right)}\notag \\
&\qquad \times
P\left( r^{a \to b}(t) \in dy, r^{a \to b}(s) \in dx \right). \label{Eq_Joint_Dist_Cond_Bes_bridges}
\end{align}
Therefore, combining \eqref{CondBESbridge_density}, \eqref{Eq_Joint_Dist_Cond_Bes_bridges} and \eqref{BESbridge_transdensity}, we obtain
\begin{align*}
&P\left( r^{a \to b}|_{K^-(b+\eta)}(t) \in dy~|~r^{a \to b}|_{K^-(b+\eta)}(s) = x \right)\\
&\quad=\frac{P\left( r^{a \to b}|_{K^-(b+\eta)}(t) \in dy, r^{a \to b}|_{K^-(b+\eta)}(s) \in dx \right)}{P\left(r^{a \to b}|_{K^-(b+\eta)}(s) \in dx\right)}\\
&\quad=\frac{
P\left( M_{[s,t]}(r_{[s,t]}^{x \to y})\leq b+\eta \right) 
P\left( M_{[t, 1]}(r_{[t,1]}^{y \to b})\leq b+\eta \right) }
{P\left(M_{[s, 1]}(r_{[s, 1]}^{x \to b})\leq b+\eta\right)}
\times P\left( r^{a \to b}(t) \in dy~|~ r^{a \to b}(s) = x \right)\\
&\quad=\frac{
P\left( M_{[s,t]}(r_{[s,t]}^{x \to y})\leq b+\eta \right) 
P\left( M_{[t, 1]}(r_{[t,1]}^{y \to b})\leq b+\eta \right)}
{P\left(M_{[s, 1]}(r_{[s, 1]}^{x \to b})\leq b+\eta\right)}
\times \frac{P\left(R^x(t-s)\in dy\right)P\left(R^y(1-t)\in db\right)}{P\left(R^x(1-s)\in db\right)}\\
&\quad=\frac{
q^{(b+\eta)}_1(s, x, t, y)
q^{(b+\eta)}_1(t, y, 1, b)
}{
q^{(b+\eta)}_1(s, x, 1, b)}dy.
\end{align*}
Hence, \eqref{sequence_transdensity} holds.
\end{proof}

\begin{prop}
\label{prop_moving_density}
Let $0\leq a<b$. For $0 < s < t < 1$ and $x, y \in (0, b)$, we have
\begin{align}
&\lim_{\eta \downarrow 0}
P\left( r^{a \to b}|_{K^-(b+\eta)}(t) \in dy \right) 
=
\frac{q^{(b)}_1(0, a, t, y)q^{(b)}_2(1-t, y)}{q^{(b)}_2(1, a)}dy,
\label{Eq_Moving_Density}\\
&\lim_{\eta \downarrow 0}
P\left( r^{a \to b}|_{K^-(b+\eta)}(t) \in dy
~\big|~r^{a \to b}|_{K^-(b+\eta)}(s)=x \right) 
=\frac{q^{(b)}_1(s, x, t, y)q^{(b)}_2(1-t, y)}{q^{(b)}_2(1-s, x)}dy. 
\label{Eq_Moving_TransDensity}
\end{align}
\end{prop}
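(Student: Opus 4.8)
The plan is to start from the exact densities in Lemma~\ref{Lem_Densities_Sequences} and pass to the limit $\eta\downarrow 0$ by isolating the factors that degenerate. The right-hand sides of \eqref{sequence_density_starting_a} and \eqref{sequence_transdensity} are ratios of products of $q^{(b+\eta)}_1$. The three factors $q^{(b+\eta)}_1(t,y,1,b)$, $q^{(b+\eta)}_1(0,a,1,b)$ and $q^{(b+\eta)}_1(s,x,1,b)$ all terminate at the barrier point $b$, and each tends to $0$ as $\eta\downarrow 0$; the remaining factors $q^{(b+\eta)}_1(0,a,t,y)$ and $q^{(b+\eta)}_1(s,x,t,y)$ stay away from this degeneracy because $x,y\in(0,b)$. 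The key idea is therefore to divide numerator and denominator by $\eta$, so that each terminating factor becomes a difference quotient converging to a $q^{(b)}_2$, while the nondegenerate factors converge by continuity.

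The heart of the argument is the following limit (Step~1): for $r\in[0,1)$ and $z\in(0,b)$,
\[
\lim_{\eta\downarrow 0}\frac{1}{\eta}\,q^{(b+\eta)}_1(r,z,1,b)=q^{(b)}_2(1-r,z).
\]
To prove it, set $h(\varepsilon):=q^{(b+\varepsilon)}_1(r,z,1,b)$. By translation invariance of the maximal-value law (Theorem~\ref{Thm_max_dist_of_BES_bridge} depends only on $t-s$) one has $h(\varepsilon)=q^{(b+\varepsilon)}_1(0,z,1-r,b)$, so by definition $q^{(b)}_2(1-r,z)=\lim_{\varepsilon\downarrow 0}h'(\varepsilon)$. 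Proposition~\ref{Prop_Diff_max_dist_of_BES_bridge} shows $h$ is $C^1$ on $(0,\infty)$ (using its $0\to b$ form when $z=0$), and Corollary~\ref{Cor_Diff_max_dist_of_BES_bridge} shows $L:=\lim_{\varepsilon\downarrow 0}h'(\varepsilon)$ exists and is finite. Moreover the estimate \eqref{Lem_2_2_eq2} of Lemma~\ref{Lem_Esti_q_first}, through the prefactor $1\wedge\frac{2\eta(b+\eta)}{1-r}$, forces $h(0^+)=\lim_{\varepsilon\downarrow 0}h(\varepsilon)=0$. Hence by the fundamental theorem of calculus $h(\eta)=\int_0^\eta h'(\varepsilon)\,d\varepsilon$, and since the average of a function over $[0,\eta]$ converges to its right-hand limit at $0$, we get $\eta^{-1}h(\eta)=\eta^{-1}\int_0^\eta h'(\varepsilon)\,d\varepsilon\to L=q^{(b)}_2(1-r,z)$.

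For Step~2 I would assemble the pieces. Apply Step~1 with $(r,z)=(t,y)$, with $(r,z)=(0,a)$, and with $(r,z)=(s,x)$. Combine this with the continuity statements $q^{(b+\eta)}_1(0,a,t,y)\to q^{(b)}_1(0,a,t,y)$ and $q^{(b+\eta)}_1(s,x,t,y)\to q^{(b)}_1(s,x,t,y)$ as $\eta\downarrow 0$; these hold because $P(M_{[u,v]}(r^{x\to y}_{[u,v]})\leq b+\eta)\downarrow P(M_{[u,v]}(r^{x\to y}_{[u,v]})\leq b)$ by continuity from above of the probability measure, and the condition $x,y<b$ keeps these factors nondegenerate. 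Writing each density of Lemma~\ref{Lem_Densities_Sequences} in the form
\[
q^{(b+\eta)}_1(\,\cdot\,,t,y)\cdot
\frac{\eta^{-1}q^{(b+\eta)}_1(t,y,1,b)}{\eta^{-1}q^{(b+\eta)}_1(\,\cdot\,,1,b)}
\]
and passing to the limit term by term yields the claim, provided the denominators do not vanish in the limit. This is exactly where Lemma~\ref{Lem_q2_is_positive} enters: it guarantees $q^{(b)}_2(1,a)>0$ and $q^{(b)}_2(1-s,x)>0$, so the quotients converge to the stated expressions, giving \eqref{Eq_Moving_Density} and \eqref{Eq_Moving_TransDensity}.

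The main obstacle is Step~1, namely reconciling the two descriptions of $q_2$: its definition as $\lim_{\varepsilon\downarrow 0}\partial_\varepsilon q^{(b+\varepsilon)}_1$ versus the difference quotient $\lim_{\eta\downarrow 0}\eta^{-1}q^{(b+\eta)}_1(\,\cdot\,,1,b)$ required here. These coincide precisely because of two facts that must be invoked carefully, namely the decay $q^{(b+\eta)}_1(\,\cdot\,,1,b)\to 0$ supplied by \eqref{Lem_2_2_eq2} and the continuous extension of the $\varepsilon$-derivative to $\varepsilon=0^+$ supplied by Corollary~\ref{Cor_Diff_max_dist_of_BES_bridge}. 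Everything else is continuity of the nondegenerate factors and the strict positivity of the limiting denominators.
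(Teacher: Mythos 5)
Your proposal is correct and follows essentially the same route as the paper, whose entire proof reads ``By Lemma~\ref{Lem_Densities_Sequences} and L'H\^{o}pital's rule, we obtain our assertion'': your Step~1 (writing $h(\eta)=\int_0^\eta h'(\varepsilon)\,d\varepsilon$ and averaging) is just an unpacking of L'H\^{o}pital's rule for the $0/0$ ratio, using exactly the ingredients the paper relies on elsewhere (e.g.\ in Proposition~\ref{Prop_int_h} and Lemma~\ref{Lem_Esti_q}). The extra details you supply --- the vanishing of the terminating factors via \eqref{Lem_2_2_eq2}, continuity of the nondegenerate factors, and positivity of the limiting denominators via Lemma~\ref{Lem_q2_is_positive} --- are all consistent with the paper's framework.
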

\begin{proof}
By Lemma~\ref{Lem_Densities_Sequences} and L'H\^{o}pital's rule, we obtain our 
assertion.
\end{proof}

Let $b>0$. For $0\leq s<t\leq 1$ and $x, y\in [0, b]$, we define
\begin{equation}\label{Eq_h_b}
h_b(s,x,t,y) 
:= \frac{q^{(b)}_1(s, x, t, y)q^{(b)}_2(1-t, y)}{q^{(b)}_2(1-s, x)}.
\end{equation}

\begin{prop}
\label{Prop_int_h}
Let $b>0$. For $0\leq s<t\leq 1$ and $x\in [0, b)$, we have 
\[\int_0^b h_b(s, x, t, y)dy=1.\]
\end{prop}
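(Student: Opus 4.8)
The plan is to obtain $\int_0^b h_b(s,x,t,y)\,dy=1$ as the $\eta\downarrow0$ limit of the normalization of the transition density of the conditioned bridge $r^{a\to b}|_{K^-(b+\eta)}$. Since \eqref{sequence_transdensity} in Lemma~\ref{Lem_Densities_Sequences} is a genuine Markov transition density on the state space $(0,b+\eta)$, integrating it in $y$ gives the Chapman--Kolmogorov-type identity
\[
\int_0^{b+\eta} q^{(b+\eta)}_1(s,x,t,y)\,q^{(b+\eta)}_1(t,y,1,b)\,dy
= q^{(b+\eta)}_1(s,x,1,b),\qquad \eta>0.
\]
I would then divide this identity by $\eta$ and let $\eta\downarrow0$, which reduces the problem to passing the limit under the integral sign on the left-hand side.

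First I would record the relevant pointwise limits. For fixed $x\in[0,b)$ and $y\in(0,b)$ one has $q^{(b+\eta)}_1(s,x,t,y)\to q^{(b)}_1(s,x,t,y)$ by continuity of the maximum distribution in the barrier. For the factor terminating at $b$, the key point is that a bridge ending at $b$ satisfies $P\!\left(M_{[t,1]}(r^{y\to b}_{[t,1]})\le b\right)=0$, so that $P\!\left(M_{[t,1]}(r^{y\to b}_{[t,1]})\le b+\eta\right)$ is itself a difference quotient in $\eta$ based at $0$; dividing $q^{(b+\eta)}_1(t,y,1,b)$ by $\eta$ and using the time-shift invariance of the bridge law thus yields $q^{(b+\eta)}_1(t,y,1,b)/\eta\to q^{(b)}_2(1-t,y)$, the limit being precisely the one-sided derivative appearing in the definition of $q_2$ and made explicit in Corollary~\ref{Cor_Diff_max_dist_of_BES_bridge}. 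The same L'H\^opital computation already used in Proposition~\ref{prop_moving_density} gives $q^{(b+\eta)}_1(s,x,1,b)/\eta\to q^{(b)}_2(1-s,x)$ for the right-hand side.

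The main obstacle is justifying the interchange of limit and integral, and this is exactly where Lemma~\ref{Lem_Esti_q_first} enters. By \eqref{Lem_2_2_eq1} and \eqref{Lem_2_2_eq2}, for every $\eta\in(0,1]$,
\[
q^{(b+\eta)}_1(s,x,t,y)\,\frac{q^{(b+\eta)}_1(t,y,1,b)}{\eta}
\le \frac{C_{\nu,b}^2}{(t-s)^{\nu+1}(1-t)^{\nu+1}}\,
\frac1\eta\!\left(1\wedge\frac{2\eta(b+\eta)}{1-t}\right) n_{t-s}(y-x)\,n_{1-t}(y-b),
\]
and since $\tfrac1\eta\bigl(1\wedge\tfrac{2\eta(b+\eta)}{1-t}\bigr)\le \tfrac{2(b+1)}{1-t}$ uniformly in $\eta\in(0,1]$, the integrand is dominated by a fixed multiple of the integrable Gaussian product $n_{t-s}(y-x)\,n_{1-t}(y-b)$, independently of $\eta$. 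Lebesgue's dominated convergence theorem then lets me pass to the limit and obtain $\int_0^b q^{(b)}_1(s,x,t,y)\,q^{(b)}_2(1-t,y)\,dy = q^{(b)}_2(1-s,x)$. Dividing by $q^{(b)}_2(1-s,x)$, which is strictly positive by Lemma~\ref{Lem_q2_is_positive}, yields $\int_0^b h_b(s,x,t,y)\,dy=1$ for $0\le s<t<1$; the boundary case $t=1$ follows from the same estimates in the limit.
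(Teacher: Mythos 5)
Your proposal is correct and follows essentially the same route as the paper: integrate the normalized transition density identity from Lemma~\ref{Lem_Densities_Sequences} to get $\int_0^{b+\eta} q^{(b+\eta)}_1(s,x,t,y)\,q^{(b+\eta)}_1(t,y,1,b)\,dy = q^{(b+\eta)}_1(s,x,1,b)$, divide by $\eta$, identify the pointwise limits via the derivative definition of $q_2^{(b)}$ (L'H\^opital), and justify the interchange with the uniform Gaussian bound from Lemma~\ref{Lem_Esti_q_first} together with $\frac1\eta\bigl(1\wedge\frac{2\eta(b+\eta)}{1-t}\bigr)\le\frac{2(b+1)}{1-t}$ and dominated convergence. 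The only cosmetic difference is that you make explicit the final division by $q^{(b)}_2(1-s,x)>0$ (Lemma~\ref{Lem_q2_is_positive}), which the paper leaves implicit in the definition of $h_b$.
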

\begin{proof}
By \eqref{Eq_h_b}, it suffices to show the following identity: 
\[
q^{(b)}_2(1-s,x)
=\int_0^b q^{(b)}_1(s,x,t,y)q^{(b)}_2(1-t,y)dy.\]
Here, using Lemma~\ref{Lem_Densities_Sequences}, it holds that 
\begin{equation}\label{Eq_KeyLimit}
\frac{q^{(b+\eta)}_1(s, x, 1, b)}{\eta}
=\int_0^{b+\eta}q^{(b+\eta)}_1(s, x, t, y) \frac{q^{(b+\eta)}_1(t, y, 1, b)}{\eta}dy.
\end{equation}
According to L'H\^{o}pital's rule, we obtain 
\begin{equation}\label{Eq_LeftLimit}
\lim_{\eta\downarrow 0}\frac{q^{(b+\eta)}_1(s, x, 1, b)}{\eta}=q^{(b)}_2(1-s, x).
\end{equation}
On the other hand, by Lemma~\ref{Lem_Esti_q_first}, 
for $\eta\in (0, 1)$ and $y\in(0, b+\eta)$, we have the following estimate: 
\begin{align}
q^{(b+\eta)}_1(s, x, t, y) \frac{q^{(b+\eta)}_1(t, y, 1, b)}{\eta}
&\leq \frac{1}{\eta}\frac{C_{\nu, b}}{(t-s)^{\nu+1}}n_{t-s}(y-x)
\frac{C_{\nu, b}}{(1-t)^{\nu+1}}
\left(1\wedge \frac{2\eta(b+\eta)}{1-t}\right)
n_{1-t}(y-b)\notag\\
&\leq \frac{C_{\nu, b}^2(b+1)}{\pi (t-s)^{\nu+3/2}(1-t)^{\nu+5/2}}
<\infty.\label{Ineq_RightLimit}
\end{align}
Again, using L'H\^{o}pital's rule, it holds that 
\begin{equation}\label{Eq_RightLimit}
\lim_{\eta\downarrow 0}q^{(b+\eta)}_1(s, x, t, y) \frac{q^{(b+\eta)}_1(t, y, 1, b)}{\eta}=q^{(b)}_1(s, x, t, y)q^{(b)}_2(1-t, y)
\end{equation}
for $y\in (0, b)$. 
Therefore, by \eqref{Ineq_RightLimit}, \eqref{Eq_RightLimit}, and Lebesgue's dominated convergence theorem, we obtain 
\[\lim_{\eta\downarrow 0}\frac{1}{\eta}\int_0^{b+\eta}q^{(b+\eta)}_1(s, x, t, y) q^{(b+\eta)}_1(t, y, 1, b)dy=\int_0^b q^{(b)}_1(s, x, t, y)q^{(b)}_2(1-t, y)dy.\]
By this equality and \eqref{Eq_LeftLimit}, taking the limit $\eta\downarrow 0$ in \eqref{Eq_KeyLimit} 
allows us to prove the assertion.
\end{proof}

The following proposition implies that $h_b(s, x, t, y)$ satisfies the Chapman--Kolmogorov identity.

\begin{prop}\label{Prop_ChapmanKolmogorov}
Let $b>0$. For $0 < s < t < u < 1$ and $x, z \in (0,b)$, we have 
\[
h_b(s,x,u,z) = \int_0^b h_b(s,x,t,y) \ h_b(t,y,u,z) dy.
\]
\end{prop}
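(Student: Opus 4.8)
The plan is to reduce the stated identity to a Chapman--Kolmogorov relation for $q^{(b)}_1$ alone, and then to prove the latter by interpreting $q^{(b)}_1$ as the transition density of the BES($\delta$) process absorbed at the level $b$. First I would substitute the definition \eqref{Eq_h_b} into the right-hand side. Writing out the product $h_b(s,x,t,y)h_b(t,y,u,z)$, the factor $q^{(b)}_2(1-t,y)$ occurs once in the numerator (coming from $h_b(s,x,t,y)$) and once in the denominator (coming from $h_b(t,y,u,z)$), so it cancels; this cancellation is legitimate because $q^{(b)}_2(1-t,y)>0$ for $y\in(0,b)$ by Lemma~\ref{Lem_q2_is_positive}. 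This yields
\begin{align*}
\int_0^b h_b(s,x,t,y)\,h_b(t,y,u,z)\,dy
=\frac{q^{(b)}_2(1-u,z)}{q^{(b)}_2(1-s,x)}\int_0^b q^{(b)}_1(s,x,t,y)\,q^{(b)}_1(t,y,u,z)\,dy.
\end{align*}
Comparing with $h_b(s,x,u,z)=q^{(b)}_1(s,x,u,z)\,q^{(b)}_2(1-u,z)/q^{(b)}_2(1-s,x)$ and cancelling the common positive factor $q^{(b)}_2(1-u,z)/q^{(b)}_2(1-s,x)$, it suffices to prove
\begin{align*}
q^{(b)}_1(s,x,u,z)=\int_0^b q^{(b)}_1(s,x,t,y)\,q^{(b)}_1(t,y,u,z)\,dy.
\end{align*}

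Next I would identify $q^{(b)}_1$ with an absorbed transition density. Combining the definition of $q^{(b)}_1$ with the bridge decomposition
\begin{align*}
P\left(R^x(t-s)\in dy,\,M_{[s,t]}(R^x)\le b\right)
=P\left(M_{[s,t]}(r_{[s,t]}^{x\to y})\le b\right)P\left(R^x(t-s)\in dy\right),
\end{align*}
which is exactly the step used in the proof of Lemma~\ref{Lem_Joint_Dist_BES_bridge}, one sees that
\begin{align*}
q^{(b)}_1(s,x,t,y)=\frac{P\left(R^x(t-s)\in dy,\,M_{[s,t]}(R^x)\le b\right)}{dy};
\end{align*}
that is, $q^{(b)}_1(s,x,t,y)$ is the sub-Markovian transition density of the BES($\delta$) process killed upon reaching $b$.

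Finally, I would establish the Chapman--Kolmogorov relation for this killed density by the Markov property of $R^x$ at the intermediate time $t\in(s,u)$. The event $\{M_{[s,u]}(R^x)\le b\}$ factorizes as $\{M_{[s,t]}(R^x)\le b\}\cap\{M_{[t,u]}(R^x)\le b\}$, and conditioning on $R^x(t-s)=y$ splits the path into two conditionally independent pieces whose laws are governed by $q^{(b)}_1(s,x,t,\cdot)$ and $q^{(b)}_1(t,\cdot,u,z)$; the first event already forces $y<b$, so the integration variable ranges over $(0,b)$ (the endpoint $y=b$ being negligible). The hard part will be the rigorous justification of this last step, namely that killing at the intermediate time is consistent with killing over the whole interval, so that the absorbed semigroup genuinely satisfies Chapman--Kolmogorov and the integral may be restricted to $(0,b)$. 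This can be made precise via the strong Markov property applied at the first hitting time of $b$; alternatively, for a purely analytic route one may insert the Fourier--Bessel series of Theorem~\ref{Thm_max_dist_of_BES_bridge} and verify the convolution identity term by term, using the estimates of Lemma~\ref{Lem_Esti_J} to justify interchanging summation and integration.
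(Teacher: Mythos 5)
Your proposal is correct, and its first half — substituting \eqref{Eq_h_b}, cancelling $q^{(b)}_2(1-t,y)$ (legitimate by Lemma~\ref{Lem_q2_is_positive}), and reducing the claim to the Chapman--Kolmogorov identity for $q^{(b)}_1$ alone — is exactly the paper's reduction. Where you diverge is in how that identity is verified. The paper decomposes the \emph{bridge} $r^{x\to z}_{[s,u]}$ at the intermediate time: it writes $q^{(b)}_1(s,x,u,z)$ as $\frac{P(R^x(u-s)\in dz)}{dz}\int_0^b P\bigl(r^{x\to z}_{[s,u]}(t)\in dy,\ M_{[s,u]}(r^{x\to z}_{[s,u]})\le b\bigr)$, factorizes the integrand by Lemma~\ref{Lem_Joint_Dist_BES_bridge}, and then inserts the bridge transition density \eqref{BESbridge_transdensity}. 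You instead decompose the \emph{killed Bessel process} at the intermediate time, identifying $q^{(b)}_1$ with the sub-Markovian density of $R$ absorbed at $b$. The two verifications are equivalent repackagings of the same Markov-property computation (Lemma~\ref{Lem_Joint_Dist_BES_bridge} is itself proved by exactly that computation), so neither buys anything substantive over the other: yours avoids invoking the explicit bridge density formula, while the paper's avoids introducing the killed semigroup. One remark: the final step you flag as ``the hard part'' is easier than you fear. Since $t$ is a deterministic time, the factorization $\{M_{u-s}(R^x)\le b\}=\{M_{t-s}(R^x)\le b\}\cap\{\sup_{r\in[t-s,u-s]}R^x(r)\le b\}$ together with the ordinary Markov property at time $t-s$ already yields the convolution identity; no strong Markov property at the hitting time of $b$ and no term-by-term Fourier--Bessel verification are needed. (A notational nit: the maximum in your killed-density identity should be $M_{t-s}(R^x)$, the running maximum of $R^x$ over a window of length $t-s$, rather than $M_{[s,t]}(R^x)$.)
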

\begin{proof}
By \eqref{Eq_h_b}, it suffices to show the following identity: 
\[
q^{(b)}_1(s,x,u,z) = \int_0^b q^{(b)}_1(s,x,t,y)q^{(b)}_1(t,y,u,z) dy.
\]
According to Lemma~\ref{Lem_Joint_Dist_BES_bridge} 
and \eqref{BESbridge_transdensity}, 
we can prove the assertion as follows:
\begin{align*}
&q^{(b)}_1(s,x,u,z)\\
&\quad=\frac{P\left(R^x(u-s)\in dz\right)}{dz}
\int_0^b P\left(r_{[s, u]}^{x\to z}(t)\in dy, M_{[s, u]}(r_{[s, u]}^{x \to z})\leq b\right)\\
&\quad=\frac{P\left(R^x(u-s)\in dz\right)}{dz}
\int_0^b P\left(M_{[s,t]}(r_{[s,t]}^{x \to y})\leq b\right)P\left(M_{[t, u]}(r_{[t, u]}^{y \to z})\leq b \right) 
P\left(r_{[s, u]}^{x\to z}(t)\in dy\right)\\
&\quad= \int_0^b q^{(b)}_1(s,x,t,y)q^{(b)}_1(t,y,u,z) dy.
\end{align*}
\end{proof}

By Proposition~\ref{Prop_int_h} and Proposition~\ref{Prop_ChapmanKolmogorov}, 
the right sides of \eqref{Eq_Moving_Density} and \eqref{Eq_Moving_TransDensity} 
determine the continuous Markov process $H^{a\to b}=\{H^{a\to b}(t)\}_{t\in [0, 1]}$. 
Then, by Proposition~\ref{prop_moving_density} and 
Lemma~\ref{Ap_Lem_Markov_TransDensityConv_FinDimDistConv}, 
we obtain the convergence $r^{a \to b}|_{K^-(b+\eta)} \to H^{a \to b}$ 
as $\eta \downarrow 0$ in the finite-dimensional distributional sense. 
Therefore, all that remains in proving Theorem~\ref{Thm_MainResult2} 
is the tightness of the family 
$\{ r^{a \to b}|_{K^-(b+\eta)} \}_{0<\eta<\eta_0}$ 
for some $\eta_0>0$. 
By
\[
\lim_{\eta \downarrow 0} q^{(b+\eta)}_2(1, a)  
= q^{(b)}_2(1, a),
\]
we can take $\eta_1 > 0$ so that 
$q^{(b+\eta)}_2(1, a) > q^{(b)}_2(1, a)/2$ holds for $\eta \in (0, \eta_1)$. 
Throughout this section, we fix $\eta_1$ in this fashion and denote 
\begin{equation}\label{def_eta0}
\eta_0 := \min \{ \eta_1, 1 \}.
\end{equation}

\begin{lem}\label{Lem_Esti_q}
Let $0\leq a<b$ and let $0<\eta<\eta_0$ be fixed. We have
\begin{align*}
q^{(b+\eta)}_1(0, a, 1, b) > \eta \frac{q^{(b)}_2(1, a)}{2} .
\end{align*}
\end{lem}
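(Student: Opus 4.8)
The plan is to turn the inequality into a statement about the growth of $\eta\mapsto q^{(b+\eta)}_1(0,a,1,b)$ off the left endpoint $\eta=0$, and then to harvest the constant $1/2$ from the very choice of $\eta_1$. Writing $\rho:=P(R^a(1)\in db)/db$, the definition of $q^{(b+\varepsilon)}_1$ gives $q^{(b+\varepsilon)}_1(0,a,1,b)=\rho\,P(M(r^{a\to b})\le b+\varepsilon)$, where only the second factor depends on $\varepsilon$. Since $r^{a\to b}$ terminates at $b$, it exceeds $b$ strictly before time $1$ almost surely, so $P(M(r^{a\to b})\le b)=0$ and hence $q^{(b)}_1(0,a,1,b)=0$. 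On the other hand, by the definition of $q^{(b)}_2$ together with \eqref{Eq_LeftLimit} (L'H\^opital's rule as in the proof of Proposition~\ref{Prop_int_h}), the right derivative at the left endpoint exists and equals $q^{(b)}_2(1,a)$. Differentiability of $\varepsilon\mapsto P(M(r^{a\to b})\le b+\varepsilon)$ is supplied by Proposition~\ref{Prop_Diff_max_dist_of_BES_bridge}, so the fundamental theorem of calculus in the barrier variable yields
\[
q^{(b+\eta)}_1(0,a,1,b)=\int_0^{\eta}\frac{\partial}{\partial\varepsilon}q^{(b+\varepsilon)}_1(0,a,1,b)\,d\varepsilon .
\]
Thus the lemma is equivalent to showing that the average of this integrand over $[0,\eta]$ stays above $\tfrac12 q^{(b)}_2(1,a)$ for $\eta\in(0,\eta_0)$.

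Next I would make the integrand explicit and expose where $\eta_1$ enters. Term‑by‑term differentiation of the series of Theorem~\ref{Thm_max_dist_of_BES_bridge} (legitimate by Proposition~\ref{Prop_Diff_max_dist_of_BES_bridge}) writes $\partial_\varepsilon q^{(b+\varepsilon)}_1(0,a,1,b)$ as a Fourier--Bessel series whose dominant exponential factor is $\exp\!\big(-j_{\nu,1}^2/(2(b+\varepsilon)^2)\big)$, while Corollary~\ref{Cor_Diff_max_dist_of_BES_bridge} and Lemma~\ref{Lem_q2_is_positive} present $q^{(b)}_2(1,a)$ as a convergent series with leading factor $\exp\!\big(-j_{\nu,1}^2/(2b^2)\big)$. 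Because $b+\varepsilon>b$, the former decays strictly more slowly, which is the structural reason the inequality has slack. The role of $\eta_1$ is exactly to quantify this slack: the defining property $q^{(b+\eta)}_2(1,a)>q^{(b)}_2(1,a)/2$ on $(0,\eta_1)$ controls how much the dominant factor—and, once the tail is estimated, the whole series—can decrease over $(0,\eta_0)$, since $\eta_0\le\eta_1$.

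The hard part is that the bound cannot be obtained pointwise. The integrand equals $q^{(b)}_2(1,a)$ at $\varepsilon=0$, but as $\varepsilon\uparrow\eta_0$ its Gaussian‑type decay can push it below $\tfrac12 q^{(b)}_2(1,a)$, so the factor $1/2$ must be recovered from the \emph{averaging} in the displayed integral, not from a crude lower bound on the integrand itself. Concretely I would control the tail of the oscillating Bessel series uniformly in $\varepsilon\in(0,\eta_0)$ using Lemma~\ref{Lem_Esti_J} and the Fourier--Bessel estimates, so that the comparison between $\int_0^\eta\partial_\varepsilon q^{(b+\varepsilon)}_1(0,a,1,b)\,d\varepsilon$ and $\tfrac{\eta}{2}q^{(b)}_2(1,a)$ is governed, up to a uniformly small remainder, by the $n=1$ contributions; the monotone, slowly varying factor $\varepsilon\mapsto\exp\!\big(-j_{\nu,1}^2/(2(b+\varepsilon)^2)\big)$ on $[0,\eta_0]$ together with $\eta_0\le\eta_1$ then closes the estimate. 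Keeping the tail bounds uniform in $\varepsilon$ and honestly tracking the constants through the leading‑term comparison—so that the genuine $1/2$ (rather than some smaller fraction) survives on all of $(0,\eta_0)$—is where the real work lies.
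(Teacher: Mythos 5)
Your opening reduction is fine and is essentially the paper's own first step in integrated form: since $q^{(b)}_1(0,a,1,b)=0$, writing
\[
q^{(b+\eta)}_1(0,a,1,b)=\int_0^{\eta}\frac{\partial}{\partial\varepsilon}\,q^{(b+\varepsilon)}_1(0,a,1,b)\,d\varepsilon
\]
is just the fundamental-theorem version of the paper's one-line argument, which applies the mean value theorem to get $q^{(b+\eta)}_1(0,a,1,b)=\eta\,q^{(b+\theta\eta)}_2(1,a)$ for some $\theta\in(0,1)$.

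The gap is in everything after that. You assert that the integrand may fall below $\tfrac12 q^{(b)}_2(1,a)$ on $(0,\eta_0)$, conclude that the factor $1/2$ must be recovered from the averaging together with a leading-term Fourier--Bessel comparison, and then only describe that comparison (uniform tail control, constant tracking) without carrying it out. This misses the entire point of the restriction $0<\eta<\eta_0$. The derivative $\frac{\partial}{\partial\varepsilon}q^{(b+\varepsilon)}_1(0,a,1,b)$ is precisely the quantity the paper denotes $q^{(b+\varepsilon)}_2(1,a)$, and $\eta_1$ is \emph{defined}, in the paragraph preceding \eqref{def_eta0}, by the property that $q^{(b+\varepsilon)}_2(1,a)>q^{(b)}_2(1,a)/2$ for all $\varepsilon\in(0,\eta_1)$; since $\eta_0\le\eta_1$, the integrand in your own display is bounded below \emph{pointwise} by $q^{(b)}_2(1,a)/2$ on the whole interval of integration, and the lemma follows immediately. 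So your claim that the bound ``cannot be obtained pointwise'' is wrong --- the pointwise bound is exactly what the choice of $\eta_1$ was engineered to supply --- and the series analysis you sketch is both unnecessary and, as written, not a proof: the uniform tail estimates and the tracking of constants that would make the averaging argument rigorous are precisely the parts you defer. To repair the write-up, drop the averaging program, identify the intermediate derivative with $q^{(b+\varepsilon)}_2(1,a)$ (or cite the continuity statement $\lim_{\eta\downarrow 0}q^{(b+\eta)}_2(1,a)=q^{(b)}_2(1,a)$ used to choose $\eta_1$), and invoke the defining property of $\eta_1$ directly.
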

\begin{proof}
According to Taylor's theorem, we can find $\theta \in (0,1)$ so that
\begin{align*}
q^{(b+\eta)}_1(0, a, 1, b) 
=\eta q^{(b+\theta \eta)}_2(1,a) 
>\eta \frac{q^{(b)}_2(1, a)}{2} .
\end{align*}
\end{proof}

Using Lemmas~\ref{Lem_Esti_q_first} and~\ref{Lem_Esti_q}, we obtain the 
following moment inequalities:

\begin{lem}\label{Lem_MomentEq}
Let $0\leq a<b$. 
For each $\alpha>0$, 
we can find a constant $C_{\alpha, \nu, a, b} > 0$ 
such that
\begin{align}
&
\sup_{0 < \eta < \eta_0} 
E\left[ \left| r^{a \to b}|_{K^-(b+\eta)}(r) -r^{a \to b}|_{K^-(b+\eta)}(0)\right|^{2\alpha} \right] 
\leq \frac{C_{\alpha, \nu, a, b}}{r^{\nu+1-\alpha}(1-r)^{\nu+\frac{5}{2}}},
\quad r\in (0, 1), 
\label{Lem_Momentineq_eq1}\\
&
\sup_{0 < \eta < \eta_0} 
E\left[ \left| r^{a \to b}|_{K^-(b+\eta)}(1-r) -r^{a \to b}|_{K^-(b+\eta)}(1)\right|^{2\alpha} \right] 
\leq \frac{C_{\alpha, \nu, a, b}}{r^{\nu+2-\alpha}(1-r)^{\nu+\frac{3}{2}}},
\quad r\in (0, 1),
\label{Lem_Momentineq_eq2}\\
&
\sup_{0 < \eta < \eta_0} 
E\left[ \left| r^{a \to b}|_{K^-(b+\eta)}(t) 
- r^{a \to b}|_{K^-(b+\eta)}(s) \right|^{2\alpha} \right] 
\leq \frac{C_{\alpha, \nu, a, b}}{(t-s)^{\nu+1-\alpha}s^{\nu+\frac{3}{2}}(1-t)^{\nu+\frac{5}{2}}},
\ s, t \in (0, 1).
\label{Lem_Momentineq_eq3}
\end{align}
\end{lem}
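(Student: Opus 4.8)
The plan is to express each of the three moments as an explicit integral against the finite-dimensional laws of $r^{a\to b}|_{K^-(b+\eta)}$ furnished by Lemma~\ref{Lem_Densities_Sequences}, and then to substitute the kernel estimates of Lemma~\ref{Lem_Esti_q_first} together with the lower bound of Lemma~\ref{Lem_Esti_q}. For \eqref{Lem_Momentineq_eq1} I would use that $r^{a\to b}|_{K^-(b+\eta)}(0)=a$ almost surely and write, via \eqref{sequence_density_starting_a},
\[
E\!\left[\left| r^{a \to b}|_{K^-(b+\eta)}(r)-a\right|^{2\alpha}\right]
=\int_0^{b+\eta}|y-a|^{2\alpha}\,
\frac{q^{(b+\eta)}_1(0, a, r, y)\,q^{(b+\eta)}_1(r, y, 1, b)}{q^{(b+\eta)}_1(0, a, 1, b)}\,dy,
\]
and symmetrically for \eqref{Lem_Momentineq_eq2}, using $r^{a\to b}|_{K^-(b+\eta)}(1)=b$ and the density at time $1-r$. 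For the two-time bound \eqref{Lem_Momentineq_eq3} I would first record the joint density, obtained by grouping the Gaussian and maximum factors appearing in \eqref{Eq_Joint_Dist_Cond_Bes_bridges} into the three kernels, yielding
\[
P\!\left( r^{a \to b}|_{K^-(b+\eta)}(s)\in dx,\ r^{a \to b}|_{K^-(b+\eta)}(t)\in dy\right)
=\frac{q^{(b+\eta)}_1(0, a, s, x)\,q^{(b+\eta)}_1(s, x, t, y)\,q^{(b+\eta)}_1(t, y, 1, b)}{q^{(b+\eta)}_1(0, a, 1, b)}\,dx\,dy,
\]
so that the moment becomes the double integral of $|y-x|^{2\alpha}$ against this density.

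The key mechanism, and the only point requiring genuine care, is the cancellation of $\eta$ that makes the bounds uniform in $\eta\in(0,\eta_0)$. In the denominator I replace $q^{(b+\eta)}_1(0,a,1,b)$ by its lower bound $\eta\,q^{(b)}_2(1,a)/2$ from Lemma~\ref{Lem_Esti_q}, which contributes a factor of order $1/\eta$; in the numerator the terminal kernel $q^{(b+\eta)}_1(\cdot,\cdot,1,b)$ carries the barrier weight $1\wedge\tfrac{2\eta(b+\eta)}{1-t}$ from \eqref{Lem_2_2_eq2}, which I bound above by $\tfrac{2\eta(b+1)}{1-t}$ to furnish a compensating factor of order $\eta$. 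These two factors cancel, which is precisely the role for which Lemmas~\ref{Lem_Esti_q_first} and~\ref{Lem_Esti_q} were prepared; the remaining interior kernels are then dominated by their Gaussian envelopes through \eqref{Lem_2_2_eq1}.

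After these substitutions each moment reduces to a product of explicit powers of the time increments times a Gaussian integral over the bounded spatial domain $[0,b+\eta]$ (of length at most $b+1$). I would pair the weight $|y-x|^{2\alpha}$ (respectively $|y-a|^{2\alpha}$, $|y-b|^{2\alpha}$) with the Gaussian carrying the matching time increment and invoke the elementary moment identity $\int_{\mathbb{R}}|z|^{2\alpha}n_{\sigma}(z)\,dz=C_{\alpha}\sigma^{\alpha}$, while the remaining Gaussian factors are bounded crudely by $n_{\sigma}(\cdot)\le(2\pi\sigma)^{-1/2}$ before integrating the free variable over the interval. Collecting the resulting powers of $t-s$, $s$ and $1-t$ (respectively of $r$ and $1-r$) then reproduces exactly the exponents $\nu+1-\alpha$, $\nu+\tfrac32$, $\nu+\tfrac52$ in \eqref{Lem_Momentineq_eq3} and their analogues in \eqref{Lem_Momentineq_eq1} and \eqref{Lem_Momentineq_eq2}, with all constants absorbed into a single $C_{\alpha,\nu,a,b}$. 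The main obstacle I anticipate is purely bookkeeping: choosing the correct Gaussian to absorb the monomial weight and matching the time exponents so that the three target powers come out precisely; the $\eta$-cancellation, once set up as above, is automatic.
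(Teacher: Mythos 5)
Your proposal is correct and follows essentially the same route as the paper: the same marginal and joint densities from Lemma~\ref{Lem_Densities_Sequences}, the same $\eta$-cancellation between the lower bound of Lemma~\ref{Lem_Esti_q} and the barrier factor $1\wedge\frac{2\eta(b+\eta)}{1-t}$ in \eqref{Lem_2_2_eq2}, and the same pairing of the monomial weight with the Gaussian carrying the matching time increment while bounding the remaining Gaussians by $(2\pi\sigma)^{-1/2}$. The exponent bookkeeping you describe reproduces the paper's computation exactly.
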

\begin{proof}
By Lemmas~\ref{Lem_Esti_q_first} and~\ref{Lem_Esti_q}, we have
\begin{align*}
&P\left(\left.r^{a \to b}\right|_{K^{-}(b+\eta)}(u) \in dz\right)\notag \\
&\quad =\frac{q^{(b+\eta)}_{1}(0, a, u, z) q^{(b+\eta)}_1(u, z, 1, b)}{q^{(b+\eta)}_1(0, a, 1, b)}1_{[0,b+\eta]}(z)dz\\
&\quad \leq 
\frac{2}{\eta q^{(b)}_2(1, a)}
\left(\frac{C_{\nu, b}}{u^{\nu+1}}n_u(z-a)\right)
\left(
\frac{C_{\nu, b}}{(1-u)^{\nu+1}}
\left(1\wedge \frac{2\eta(b+\eta)}{1-u}\right)
n_{1-u}(z-b)\right)dz\\
&\quad \leq 
\frac{4(b+\eta)C_{\nu, b}^2}{q^{(b)}_2(1, a)}
\frac{1}{u^{\nu+1}(1-u)^{\nu+2}}n_u(z-a)n_{1-u}(z-b)dz
\end{align*}
for $0<u<1$. 
On the other hand, for each $c\in\mathbb{R}$, 
\begin{align*}
\int_0^{b+\eta}|z-c|^{2\alpha}n_{r}(z-c)dz
&\leq 2 \int_0^{\infty} w^{2\alpha}n_r(w)dw
=\frac{(2r)^{\alpha}}{\sqrt{\pi}}\Gamma\left(\alpha+\frac{1}{2}\right)
\end{align*}
holds. Hence, because we have
\begin{align*}
&E\left[\left|r^{a \to b}|_{K^{-}(b+\eta)}(r)-r^{a \to b}|_{K^{-}(b+\eta)}(0)\right|^{2\alpha}\right]\notag \\
&\quad \leq 
\frac{4(b+\eta)C_{\nu, b}^2}{\sqrt{2\pi}q^{(b)}_2(1, a)}
\frac{1}{r^{\nu+1}(1-r)^{\nu+5/2}}
\int_{0}^{b+\eta}|z-a|^{2 \alpha} n_{r}(z-a)dz\\
&\quad \leq 
\frac{2^{\alpha+2}(b+\eta)C_{\nu, b}^2 \Gamma\left(\alpha+\frac{1}{2}\right)}{\sqrt{2}\pi q^{(b)}_2(1, a)}
\frac{1}{r^{\nu+1-\alpha}(1-r)^{\nu+5/2}}
\end{align*}
and
\begin{align*}
&E\left[\left|r^{a \to b}|_{K^{-}(b+\eta)}(1-r)-r^{a \to b}|_{K^{-}(b+\eta)}(1)\right|^{2\alpha}\right]\\
&\quad \leq 
\frac{4(b+\eta)C_{\nu, b}^2}{\sqrt{2\pi}q^{(b)}_2(1, a)}
\frac{1}{(1-r)^{\nu+3/2}r^{\nu+2}}
\int_{0}^{b+\eta}|z-b|^{2 \alpha} 
n_{r}(z-b)dz\\
&\quad \leq 
\frac{2^{\alpha+2}(b+\eta)C_{\nu, b}^2 \Gamma\left(\alpha+\frac{1}{2}\right)}{\sqrt{2}\pi q^{(b)}_2(1, a)}
\frac{1}{(1-r)^{\nu+3/2}r^{\nu+2-\alpha}},
\end{align*}
we obtain inequalities \eqref{Lem_Momentineq_eq1} and \eqref{Lem_Momentineq_eq2} as follows:
\begin{align*}
\sup_{0<\eta<\eta_0}
E\left[\left|r^{a \to b}|_{K^{-}(b+\eta)}(r)-r^{a \to b}|_{K^{-}(b+\eta)}(0)\right|^{2\alpha}\right]
&\leq \frac{2^{\alpha+2}(b+1)C_{\nu, b}^2\Gamma\left(\alpha+\frac{1}{2}\right)}{\sqrt{2}\pi q^{(b)}_2(1, a)}
\frac{1}{r^{\nu+1-\alpha}(1-r)^{\nu+5/2}} , \\
\sup_{0<\eta<\eta_0}
E\left[\left|r^{a \to b}|_{K^{-}(b+\eta)}(1-r)-r^{a \to b}|_{K^{-}(b+\eta)}(1)\right|^{2\alpha}\right]
&\leq \frac{2^{\alpha+2}(b+1)C_{\nu, b}^2\Gamma\left(\alpha+\frac{1}{2}\right)}{\sqrt{2}\pi q^{(b)}_2(1, a)}
\frac{1}{r^{\nu+2-\alpha}(1-r)^{\nu+\frac{3}{2}}}.
\end{align*}

Next, we prove \eqref{Lem_Momentineq_eq3}. We note that
\begin{align*}
&P\left(r^{a \to b}|_{K^{-}(b+\eta)}(t) \in dy,\ r^{a \to b}|_{K^{-}(b+\eta)}(s) \in d x\right) \\
&\quad=P\left(r^{a \to b}|_{K^{-}(b+\eta)}(t) \in dy \ | \ r^{a \to b}|_{K^{-}(b+\eta)}(s)=x\right) P\left(r^{a \to b}|_{K^{-}(b+\eta)}(s) \in dx\right)\\
&\quad=\frac{q^{(b+\eta)}_1(0, a, s, x) q^{(b+\eta)}_1(t, y, 1, b)}{q^{(b+\eta)}_1(0, a, 1, b)}q^{(b+\eta)}_1(s, x, t, y)dxdy, 
\qquad 0<x, y<b+\eta .
\end{align*}
By Lemmas~\ref{Lem_Esti_q_first} and \ref{Lem_Esti_q}, we have
\begin{align*}
&P\left(r^{a \to b}|_{K^{-}(b+\eta)}(t) \in dy,\ r^{a \to b}|_{K^{-}(b+\eta)}(s) \in dx\right) \\
&\quad\leq 
\frac{2}{\eta q^{(b)}_2(1, a)}
\cdot
\frac{C_{\nu, b}}{s^{\nu+1}}n_{s}(x-a)
\cdot
\frac{C_{\nu, b}}{(1-t)^{\nu+1}}
\frac{2\eta(b+\eta)}{1-t}n_{1-t}(y-b)
\cdot
\frac{C_{\nu, b}}{(t-s)^{\nu+1}}n_{t-s}(y-x)dxdy\\
&\quad\leq 
\frac{2(b+\eta)C_{\nu, b}^3}{\pi q^{(b)}_2(1, a)}
\cdot
\frac{1}{(t-s)^{\nu+1}s^{\nu+3/2}(1-t)^{\nu+5/2}}
\cdot
n_{t-s}(y-x)dxdy.
\end{align*}
On the other hand, 
\begin{align*}
\iint_{(0, b+\eta)^2}|y-x|^{2 \alpha} n_{t-s}(y-x)dxdy
&=\int_0^{b+\eta}\left(\int_0^{b+\eta} |y-x|^{2 \alpha} n_{t-s}(y-x)dy\right)dx\\
&\leq (b+\eta)\frac{(2(t-s))^{\alpha}}{\sqrt{\pi}}\Gamma\left(\alpha+\frac{1}{2}\right)
\end{align*}
holds. Hence, we obtain 
\begin{align*}
&E\left[\left|r^{a \to b}|_{K^{-}(b+\eta)}(t)-r^{a \to b}|_{K^{-}(b+\eta)}(s)\right|^{2 \alpha}\right] \\
&\quad=\iint_{(0, b+\eta)^2}|y-x|^{2 \alpha} 
P\left(r^{a \to b}|_{K^{-}(b+\eta)}(t) \in dy,\ r^{a \to b}|_{K^{-}(b+\eta)}(s) \in dx\right) \\
&\quad \leq 
\frac{2(b+\eta)C_{\nu, b}^3}{\pi q^{(b)}_2(1, a)}
\cdot
\frac{1}{(t-s)^{\nu+1} s^{\nu+3/2}(1-t)^{\nu+5/2}}
\iint_{(0, b+\eta)^2}|y-x|^{2 \alpha} n_{t-s}(y-x)dxdy\\
&\quad \leq 
\frac{2^{\alpha+1}(b+\eta)^2C_{\nu, b}^3\Gamma\left(\alpha+\frac{1}{2}\right)}{\pi \sqrt{\pi} q^{(b)}_2(1, a)}
\cdot \frac{1}{(t-s)^{\nu+1-\alpha} s^{\nu+3/2}(1-t)^{\nu+5/2}}.
\end{align*}
Therefore, we have
\begin{align*}
&\sup_{0<\eta<\eta_0}
E\left[\left|r^{a \to b}|_{K^{-}(b+\eta)}(t)-r^{a \to b}|_{K^{-}(b+\eta)}(s)\right|^{2 \alpha}\right] \\
&\quad \leq 
\frac{2^{\alpha+1}(b+1)^2C_{\nu, b}^3\Gamma\left(\alpha+\frac{1}{2}\right)}{\pi \sqrt{\pi} q^{(b)}_2(1, a)}
\cdot \frac{1}{(t-s)^{\nu+1-\alpha} s^{\nu+3/2}(1-t)^{\nu+5/2}},
\end{align*}
and inequality \eqref{Lem_Momentineq_eq3} is proved.
\end{proof}

\begin{cor}\label{Cor_restricted_tight}
Let $0\leq a<b$. For each $u\in \left(0, \frac{1}{2}\right)$, the 
family
$\{ \pi_{[u,1-u]} \circ r^{a \to b}|_{K^-(b+\eta)} \}_{\eta\in (0, \eta_0)}$ is tight.
\end{cor}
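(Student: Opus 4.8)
The plan is to apply the Kolmogorov--Chentsov tightness criterion to the family of restricted processes $X_\eta := \pi_{[u,1-u]} \circ r^{a \to b}|_{K^-(b+\eta)}$, $\eta \in (0, \eta_0)$, regarded as $C([u,1-u], \mathbb{R})$-valued random elements. That criterion requires two ingredients: tightness of the one-point marginal laws at some fixed time, and a uniform moment bound of the form $E[|X_\eta(t) - X_\eta(s)|^{\gamma}] \le C|t-s|^{1+\beta}$ with constants $\gamma, \beta > 0$ independent of $\eta$, valid for all $s, t \in [u,1-u]$.

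For the marginals, I would simply observe that each $r^{a \to b}|_{K^-(b+\eta)}$ takes values in $[0, b+\eta] \subseteq [0, b+1]$, since $\eta < \eta_0 \le 1$ by \eqref{def_eta0}. Hence every one-point marginal law is supported on the fixed compact set $[0, b+1]$, and the family $\{X_\eta(u)\}_{\eta \in (0,\eta_0)}$ is automatically tight.

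For the moment bound, I would feed inequality \eqref{Lem_Momentineq_eq3} of Lemma~\ref{Lem_MomentEq} into the criterion. Restricting to $s, t \in [u, 1-u]$, and using $\nu + \tfrac{3}{2} > 0$ and $\nu + \tfrac{5}{2} > 0$ (which hold because $\nu > -1$), the factors $s^{\nu + 3/2}$ and $(1-t)^{\nu+5/2}$ in the denominator are bounded below by $u^{\nu+3/2}$ and $u^{\nu+5/2}$ respectively. This gives, uniformly in $\eta \in (0,\eta_0)$,
\[
E\!\left[\left|X_\eta(t) - X_\eta(s)\right|^{2\alpha}\right] \le \frac{C_{\alpha, \nu, a, b}}{u^{2\nu+4}}\,(t-s)^{\alpha - \nu - 1}, \qquad s, t \in [u, 1-u].
\]
Since Lemma~\ref{Lem_MomentEq} is available for \emph{every} $\alpha > 0$, I would now choose $\alpha$ large enough that the exponent $\alpha - \nu - 1$ strictly exceeds $1$; for instance $\alpha = \nu + 3$ yields $(t-s)^{2}$, which is exactly of the required form $(t-s)^{1+\beta}$ with $\gamma = 2\alpha > 0$ and $\beta = 1 > 0$.

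Having verified both ingredients, the Kolmogorov tightness criterion delivers tightness of $\{X_\eta\}_{\eta \in (0,\eta_0)}$ in $C([u,1-u], \mathbb{R})$, which is the assertion of Corollary~\ref{Cor_restricted_tight}. I do not anticipate a genuine obstacle: the analytic substance was already discharged in Lemma~\ref{Lem_MomentEq}, and the only points to confirm are that the endpoint singularities at $s = 0$ and $t = 1$ become harmless once one restricts to $[u, 1-u]$, and that $\alpha$ may be taken arbitrarily large. The one mild subtlety is that the criterion is usually stated for a sequence, whereas here the family is indexed by the continuous parameter $\eta$; this is immediate because tightness is a property of the collection of laws, and if preferred one reduces to arbitrary sequences $\eta_k \downarrow 0$ in $(0, \eta_0)$.
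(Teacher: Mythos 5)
Your proposal is correct and follows essentially the same route as the paper: both verify the Kolmogorov moment criterion of Lemma~\ref{Ap_Lem_Moment_estimate_type_suf_cond_for_tightness} by feeding in \eqref{Lem_Momentineq_eq3} with $\alpha=\nu+3$ and bounding $s^{\nu+\frac{3}{2}}(1-t)^{\nu+\frac{5}{2}}\geq u^{2\nu+4}$ on $[u,1-u]$ to get the increment bound $C u^{-2\nu-4}(t-s)^2$. The only (harmless) difference is in the initial-value condition: the paper invokes \eqref{Lem_Momentineq_eq1} with $\alpha=\frac{1}{2}$ to bound $E[|r^{a\to b}|_{K^-(b+\eta)}(u)|]$, whereas you use the simpler observation that the conditioned bridge is confined to the compact set $[0,b+1]$.
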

\begin{proof}
Using inequalities \eqref{Lem_Momentineq_eq1} and \eqref{Lem_Momentineq_eq3} for $\alpha=\frac{1}{2}$ and 
$\alpha=\nu+3$, respectively, we obtain
\begin{align*}
&\sup_{0 < \eta < \eta_0} 
E\left[ \left| r^{a \to b}|_{K^-(b+\eta)}(u) \right|\right]\\
&\quad \leq \sup_{0 < \eta < \eta_0}
\left(E\left[ \left| r^{a \to b}|_{K^-(b+\eta)}(u) -r^{a \to b}|_{K^-(b+\eta)}(0)\right| \right]+E\left[ \left| r^{a \to b}|_{K^-(b+\eta)}(0)\right|\right] \right)\\
&\quad \leq C_{1/2, \nu, a, b}(1-u)^{-\nu-\frac{5}{2}}u^{-\nu-\frac{1}{2}}+a<\infty
\end{align*}
and
\begin{align*}
\sup_{0 < \eta < \eta_0} 
E\left[ \left| r^{a \to b}|_{K^-(b+\eta)}(t) 
- r^{a \to b}|_{K^-(b+\eta)}(s) \right|^{2(\nu+3)} \right] 
&\leq C_{\nu+3, \nu, a, b}s^{-\nu-\frac{3}{2}}(1-t)^{-\nu-\frac{5}{2}}(t-s)^{2}\\
&\leq C_{\nu+3, \nu, a, b}u^{-2\nu-4}(t-s)^{2}
\end{align*}
for $u\leq s<t\leq 1-u$. 
Hence, by Lemma~\ref{Ap_Lem_Moment_estimate_type_suf_cond_for_tightness}, 
we establish the assertion.
\end{proof}

\begin{prop}\label{Prop_Limit_LeftRight}
Let $0\leq a<b$. For $\xi >0$, we have
\begin{align*}
&\lim_{u \downarrow 0} \sup_{\eta\in (0, \eta_0)} 
P\left( \sup_{0 \leq t \leq u} 
|r^{a \to b}|_{K^-(b+\eta)}(t) - r^{a \to b}|_{K^-(b+\eta)}(0)| > \xi \right) = 0,\\
&\lim_{u \downarrow 0} \sup_{\eta\in (0, \eta_0)} 
P\left( \sup_{1-u \leq t \leq 1} 
|r^{a \to b}|_{K^-(b+\eta)}(t) - r^{a \to b}|_{K^-(b+\eta)}(1)| > \xi \right) = 0.
\end{align*}
\end{prop}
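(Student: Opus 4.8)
The plan is to establish, uniformly in $\eta\in(0,\eta_0)$, a bound of the form $\bigl\|\sup_{0\le t\le u}|X(t)-X(0)|\bigr\|_{L^{2\alpha}}\le C\,u^{\rho}$ with some $\rho>0$, where I abbreviate $X:=r^{a\to b}|_{K^-(b+\eta)}$ and $\alpha$ is a large fixed parameter; Chebyshev's inequality then bounds the probability in the first display by $C\xi^{-2\alpha}u^{2\alpha\rho}$, which tends to $0$ as $u\downarrow0$ uniformly in $\eta$. The second display is entirely analogous, using \eqref{Lem_Momentineq_eq2} in place of \eqref{Lem_Momentineq_eq1} for the node increments and \eqref{Lem_Momentineq_eq3} for the oscillations with the roles of $s$ and $1-t$ interchanged, so I concentrate on the first. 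Crucially, every constant furnished by Lemma~\ref{Lem_MomentEq} is already uniform in $\eta\in(0,\eta_0)$, so all the estimates below inherit this uniformity automatically.

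First I would fix $\alpha>\nu+2$. This does two things at once: the increment-from-the-origin bound \eqref{Lem_Momentineq_eq1} becomes $E[|X(t)-X(0)|^{2\alpha}]\le C\,t^{\alpha-\nu-1}$ for $t\in(0,\tfrac12]$, carrying a positive power of $t$; and the two-point bound \eqref{Lem_Momentineq_eq3} carries a factor $(t-s)^{\alpha-\nu-1}$ with exponent $\alpha-\nu-1>1$, which is exactly what a Kolmogorov--Chentsov--type maximal inequality requires. The obstruction to applying such an inequality directly on $[0,u]$ is the factor $s^{-(\nu+3/2)}$ in \eqref{Lem_Momentineq_eq3}, which blows up as $s\downarrow0$; taming this boundary singularity is the main difficulty, and it forces a decomposition organized by distance to the origin rather than a single uniform chaining.

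Accordingly, for $u\le\tfrac12$ I would split $[0,u]$ into the dyadic blocks $I_k:=[u2^{-k-1},u2^{-k}]$, $k\ge0$. On $I_k$ one has $s\ge u2^{-k-1}$ and $1-t\ge\tfrac12$, so \eqref{Lem_Momentineq_eq3} reads $E[|X(t)-X(s)|^{2\alpha}]\le C_\alpha\,(u2^{-k})^{-(\nu+3/2)}(t-s)^{\alpha-\nu-1}$ for $s,t\in I_k$. Applying a Kolmogorov--Chentsov (or Garsia--Rodemich--Rumsey) maximal inequality on the fixed interval $I_k$, whose constant does not grow with the block length, yields for $\mathrm{osc}_k:=\sup_{s,t\in I_k}|X(t)-X(s)|$ the bound $\|\mathrm{osc}_k\|_{L^{2\alpha}}\le C\,(u2^{-k})^{\beta}$ with $\beta:=\gamma-\tfrac{\nu+3/2}{2\alpha}$, where $\gamma<\tfrac{\alpha-\nu-2}{2\alpha}$ is any admissible H\"older exponent; for $\alpha$ large, $\beta>0$. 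The increments at the dyadic nodes are handled directly by \eqref{Lem_Momentineq_eq1}, giving $\|X(u2^{-k})-X(0)\|_{L^{2\alpha}}\le C\,(u2^{-k})^{\beta'}$ with $\beta':=\tfrac{\alpha-\nu-1}{2\alpha}>0$.

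Finally, for $t\in I_m$ one has $|X(t)-X(0)|\le \mathrm{osc}_m+|X(u2^{-m})-X(0)|$, so that $\sup_{0\le t\le u}|X(t)-X(0)|\le \sum_{k\ge0}\mathrm{osc}_k+\sum_{k\ge0}|X(u2^{-k})-X(0)|$ (legitimate since $X(u2^{-k})\to X(0)$ by path-continuity). Minkowski's inequality together with the two convergent geometric series $\sum_k(u2^{-k})^{\beta}$ and $\sum_k(u2^{-k})^{\beta'}$ then gives $\bigl\|\sup_{0\le t\le u}|X(t)-X(0)|\bigr\|_{L^{2\alpha}}\le C\,u^{\min(\beta,\beta')}$ uniformly in $\eta$, and Chebyshev's inequality finishes the first statement. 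The crux throughout is the dyadic-in-distance bookkeeping that converts the endpoint singularities of Lemma~\ref{Lem_MomentEq} into summable, $u$-decaying contributions; choosing $\alpha$ large enough to make every exponent positive is the delicate balance. Combined with Corollary~\ref{Cor_restricted_tight}, this establishes the tightness of $\{r^{a\to b}|_{K^-(b+\eta)}\}_{\eta\in(0,\eta_0)}$ on $C([0,1],\mathbb{R})$.
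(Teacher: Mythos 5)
Your argument is correct, and it reaches the conclusion by a genuinely different route from the paper, although both rest on the same three moment inequalities of Lemma~\ref{Lem_MomentEq}. The paper fixes $\alpha=3\nu+7$ and $\gamma=\tfrac{1}{4(3\nu+7)}$, works on the uniform dyadic grid $\{k/2^n\}$ of $[0,1]$, uses Chebyshev together with \eqref{Lem_Momentineq_eq1}, \eqref{Lem_Momentineq_eq2}, \eqref{Lem_Momentineq_eq3} to bound the probability $a(n,k,\eta)$ that a single grid increment exceeds $2^{-n\gamma}$ (the cases $k=1$ and $k=2^n$ being handled by the endpoint bounds, exactly as you handle your node terms), shows $\sum_n\sup_\eta P(F_n^\eta)<\infty$, and then invokes the prepackaged Borel--Cantelli-type Lemmas~\ref{Ap_Lem_unif_conti_at_t=0} and~\ref{Ap_Lem_unif_conti_at_t=1} from \cite{bib_3DimMoving} to convert this summability into the stated uniform smallness near the endpoints. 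You instead decompose $[0,u]$ into blocks $I_k=[u2^{-k-1},u2^{-k}]$ scaled by distance to the origin, apply a Kolmogorov--Chentsov maximal inequality on each block (where the singular factor $s^{-(\nu+3/2)}$ in \eqref{Lem_Momentineq_eq3} is tamed because $s$ is comparable to the block location), and sum with Minkowski to obtain the quantitative estimate $\bigl\|\sup_{0\le t\le u}|X(t)-X(0)|\bigr\|_{L^{2\alpha}}\le Cu^{\rho}$ uniformly in $\eta$, finishing with one application of Chebyshev. The paper's version stays elementary (union bounds only) and recycles the same dyadic machinery later used for Proposition~\ref{Prop_Holder_conti_Moving}; yours buys a stronger, fully quantitative moment bound on the running supremum at the cost of importing the per-block maximal inequality. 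Your exponent bookkeeping is slightly loose --- the block oscillation actually satisfies $\|\mathrm{osc}_k\|_{L^{2\alpha}}\le C(u2^{-k})^{(\alpha-2\nu-5/2)/(2\alpha)}$, which dominates your $\beta$ --- but since every exponent is positive once $\alpha$ is large enough, this only weakens your bound and does not create a gap.
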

\begin{proof}
Applying \eqref{Lem_Momentineq_eq1}, \eqref{Lem_Momentineq_eq2} and \eqref{Lem_Momentineq_eq3} for $\alpha=3\nu+7$ and $t, s, r \in (0,1)$ with $s<t$, we have
\begin{align}
&\sup_{0 < \eta < \eta_0} 
E\left[ \left| r^{a \to b}|_{K^-(b+\eta)}(r) -r^{a \to b}|_{K^-(b+\eta)}(0)\right|^{2(3\nu+7)} \right] 
\leq C_{3\nu+7, \nu, a, b} \frac{r^{2\nu+6}}{(1-r)^{\nu+\frac{5}{2}}}, 
\label{Ineq_momentL} \\
&\sup_{0 < \eta < \eta_0} 
E\left[ \left| r^{a \to b}|_{K^-(b+\eta)}(t) 
- r^{a \to b}|_{K^-(b+\eta)}(s) \right|^{2(3\nu+7)} \right] 
\leq C_{3\nu+7, \nu, a, b} \frac{|t-s|^{2\nu+6}}{s^{\nu+\frac{3}{2}}(1-t)^{\nu+\frac{5}{2}} }, 
\label{Ineq_momentC}\\
&\sup_{0 < \eta < \eta_0} 
E\left[ \left| r^{a \to b}|_{K^-(b+\eta)}(1-r) -r^{a \to b}|_{K^-(b+\eta)}(1)\right|^{2(3\nu+7)} \right] 
\leq C_{3\nu+7, \nu, a, b} \frac{r^{2\nu+5}}{(1-r)^{\nu+\frac{3}{2}}}.
\label{Ineq_momentr}
\end{align}
Let $\gamma = \frac{1}{4\alpha}=\frac{1}{4(3\nu+7)}$, $0<\eta<\eta_0$, 
and fix $n \in \mathbb{N}$. We define
\begin{align*}
&F_n^{\eta} = \left\{ \max_{1 \leq k \leq 2^{n-1}} 
\left| r^{a \to b}|_{K^-(b+\eta)} \left(\frac{k-1}{2^n}\right) 
- r^{a \to b}|_{K^-(b+\eta)} \left(\frac{k}{2^n}\right) \right| 
\geq 2^{-n \gamma} \right\}, \\
&\widetilde{F_n^{\eta}} = \left\{ \max_{2^{n-1} \leq k \leq 2^n} 
\left| r^{a \to b}|_{K^-(b+\eta)} \left(\frac{k-1}{2^n}\right) 
- r^{a \to b}|_{K^-(b+\eta)} \left(\frac{k}{2^n}\right) \right| 
\geq 2^{-n \gamma}  \right\},\\
&a(n, k, \eta) = 
P\left( \left| 
r^{a \to b}|_{K^-(b+\eta)} \left(\frac{k-1}{2^n}\right) 
- r^{a \to b}|_{K^-(b+\eta)} \left(\frac{k}{2^n} \right) 
\right| \geq 2^{-n \gamma} \right), 
\qquad 1 \leq k \leq 2^n.
\end{align*}
Then, by Chebyshev's inequality, we have
\begin{align}
a(n, k, \eta)
&\leq 
2^{\frac{n}{2}}
E\left[ \left| r^{a \to b}|_{K^-(b+\eta)} \left(\frac{k-1}{2^n}\right) 
- r^{a \to b}|_{K^-(b+\eta)} \left(\frac{k}{2^n}\right) \right|^{2(3\nu+7)} \right],
\qquad 1 \leq k \leq 2^n .
\label{Chebyshev_ineq_cond_Besselbridge}
\end{align}
Therefore, using \eqref{Ineq_momentL}, \eqref{Ineq_momentC}, \eqref{Ineq_momentr}, and \eqref{Chebyshev_ineq_cond_Besselbridge}, we have
\begin{align*}
a(n, 1, \eta) 
&\leq 2^{\frac{n}{2}} C_{3\nu+7, \nu, a, b}\left(\frac{2^n}{2^n-1}\right)^{\nu+\frac{5}{2}}
\left( \frac{1}{2^n} \right)^{\alpha-\nu-1} 
\leq C_{3\nu+7, \nu, a, b}2^{-n(\nu+3)} 
\leq C_{3\nu+7, \nu, a, b}2^{-\frac{3}{2}n},\\
a(n, k, \eta) 
&\leq 2^{\frac{n}{2}} C_{3\nu+7, \nu, a, b} \left(\frac{2^n}{k-1}\right)^{\nu+\frac{3}{2}}
\left( \frac{2^n}{2^n-k} \right)^{\nu+\frac{5}{2}} \left( \frac{1}{2^n} \right)^{2\nu+6}
\leq C_{3\nu+7, \nu, a, b} 2^{-\frac{3}{2}n}, 
\quad (2 \leq k \leq 2^n-1),\\
a(n, 2^n, \eta) 
&\leq 2^{\frac{n}{2}} C_{3\nu+7, \nu, a, b}\left(\frac{2^n}{2^n-1}\right)^{\nu+\frac{3}{2}}
\left( \frac{1}{2^n} \right)^{2\nu+5} 
\leq C_{3\nu+7, \nu, a, b} 2^{-n(\nu+3)} 
\leq C_{3\nu+7, \nu, a, b} 2^{-\frac{3}{2}n}.
\end{align*}
Thus, it follows that
\begin{align*}
&P\left( F_n^{\eta} \right) \leq \sum_{k=1}^{2^{n-1}} a(n,k,\eta) 
\leq C_{3\nu+7, \nu, a, b} 2^{-\frac{n}{2}}, \quad
P\left( \widetilde{F}_n^{\eta} \right) \leq \sum_{k=2^{n-1}}^{2^{n}} a(n,k,\eta) 
\leq C_{3\nu+7, \nu, a, b} 2^{-\frac{n}{2}} .
\end{align*}
Therefore, Lemmas~\ref{Ap_Lem_unif_conti_at_t=0} and 
\ref{Ap_Lem_unif_conti_at_t=1} prove the desired results. 
\end{proof}

By Corollary~\ref{Cor_restricted_tight} and Proposition~\ref{Prop_Limit_LeftRight}, we can apply 
Theorem~\ref{Ap_Thm_suf_cond_restricted_tightness} to 
$\{ r^{a \to b}|_{K^-(b+\eta)} \}_{0<\eta<\eta_0}$ 
and obtain the tightness of this family.

\section{Decomposition formula and sample path properties}
\label{Section_DecompFormula}

In this section, we prove the decomposition formula for the distribution of the BES($\delta$) house-moving 
(Theorem~\ref{Thm_Decomp_flat_Moving}). 
In addition, applying this result, we study sample path properties of the BES($\delta$) house-moving. 

First, we prove Theorem~\ref{Thm_Decomp_flat_Moving}. 
By Theorem~\ref{Thm_MainResult2}, because 
$r^{a\to b}|_{K^-(b+\eta)}\xrightarrow{\mathcal{D}} H^{a\to b}$ ($\eta\downarrow 0$) holds, 
\[E\left[F(H^{a\to b})\right]
=\lim_{\eta\downarrow 0}E\left[F(r^{a\to b}|_{K^-(b+\eta)})\right]\]
for every bounded continuous function $F$ on $C([0, 1], \mathbb{R})$. 
We calculate the numerator of
\[
E\left[F(r^{a\to b}|_{K^-(b+\eta)})\right]
=\frac{E\left[F(r^{a\to b})\ ;\ r^{a\to b}\in K^-(b+\eta)\right]}{P\left(r^{a\to b}\in K^-(b+\eta)\right)}\]
as
\begin{align*}
&E\left[F(r^{a\to b})\ ;\ r^{a\to b}\in K^-(b+\eta)\right]\\
&\quad =\int_0^{\infty}
E\left[F(r^{a\to b})\ ;\ r^{a\to b}\in K^-(b+\eta), r^{a\to b}(t)\in dy\right]\\
&\quad =\int_0^{\infty}
E\left[F(r_{[0, t]}^{a\to y}|_{K_{[0, t]}^-(b+\eta)}\oplus_t r_{[t, 1]}^{y\to b}|_{K_{[t, 1]}^-(b+\eta)})\right]
P\left(r^{a\to b}\in K^-(b+\eta), r^{a\to b}(t)\in dy\right)\\
&\quad =\int_0^{\infty}
E\left[F(r_{[0, t]}^{a\to y}|_{K_{[0, t]}^-(b+\eta)}\oplus_t r_{[t, 1]}^{y\to b}|_{K_{[t, 1]}^-(b+\eta)})\right]
P\left(r^{a\to b}|_{K^-(b+\eta)}(t)\in dy\right)P\left(r^{a\to b}\in K^-(b+\eta)\right).
\end{align*}
Hence, we have 
\begin{align*}
E\left[F(r^{a\to b}|_{K^-(b+\eta)})\right]
&=\int_0^{\infty}
E\left[F(r_{[0, t]}^{a\to y}|_{K_{[0, t]}^-(b+\eta)}\oplus_t r_{[t, 1]}^{y\to b}|_{K_{[t, 1]}^-(b+\eta)})\right]
P\left(r^{a\to b}|_{K^-(b+\eta)}(t)\in dy\right).
\end{align*}
Then, it suffices to show that 
\begin{align}
&\lim_{\eta\downarrow 0}
\int_0^{\infty}
E\left[F(r_{[0, t]}^{a\to y}|_{K_{[0, t]}^-(b+\eta)}\oplus_t r_{[t, 1]}^{y\to b}|_{K_{[t, 1]}^-(b+\eta)})\right]
P\left(r^{a\to b}|_{K^-(b+\eta)}(t)\in dy\right)\notag\\
&\quad=\int_0^{\infty}
E\left[F(r_{[0, t]}^{a\to y}|_{K_{[0, t]}^-(b)}\oplus_t H_{[t, 1]}^{y\to b})\right]
P\left(H^{a\to b}(t)\in dy\right). \label{Eq_lim_right}
\end{align}
We obtain the following estimate: 
\begin{align*}
&\left|\int_0^{\infty}
E\left[F(r_{[0, t]}^{a\to y}|_{K_{[0, t]}^-(b+\eta)}\oplus_t r_{[t, 1]}^{y\to b}|_{K_{[t, 1]}^-(b+\eta)})\right]
P\left(r^{a\to b}|_{K^-(b+\eta)}(t)\in dy\right)\right.\\
&\left. \quad -
\int_0^{\infty}E\left[F(r_{[0, t]}^{a\to y}|_{K_{[0, t]}^-(b)}\oplus_t H_{[t, 1]}^{y\to b})\right]
P\left(H^{a\to b}(t)\in dy\right)\right|\\
&\quad \leq
\int_0^{\infty}
\left| E\left[F(r_{[0, t]}^{a\to y}|_{K_{[0, t]}^-(b+\eta)}\oplus_t r_{[t, 1]}^{y\to b}|_{K_{[t, 1]}^-(b+\eta)})
-F(r_{[0, t]}^{a\to y}|_{K_{[0, t]}^-(b)}\oplus_t H_{[t, 1]}^{y\to b})\right] \right| \\
&\qquad \qquad \times P\left(r^{a\to b}|_{K^-(b+\eta)}(t)\in dy\right)\\
&\qquad +\left|\int_0^{\infty}
E\left[F(r_{[0, t]}^{a\to y}|_{K_{[0, t]}^-(b)}\oplus_t H_{[t, 1]}^{y\to b})\right]
\left(P\left(r^{a\to b}|_{K^-(b+\eta)}(t)\in dy\right)-P\left(H^{a\to b}(t)\in dy\right)\right)\right|\\
&\quad =:I_t^{(1)}(\eta)+I_t^{(2)}(\eta).
\end{align*}
Then, if $I_t^{(1)}(\eta), I_t^{(2)}(\eta)\to 0$ as $\eta\downarrow0$, we can prove \eqref{Eq_lim_right}. 
First, consider 
\begin{align*}
I_t^{(1)}(\eta)
&=\int_0^{\infty}\left|
E\left[F(r_{[0, t]}^{a\to y}|_{K_{[0, t]}^-(b+\eta)}\oplus_t r_{[t, 1]}^{y\to b}|_{K_{[t, 1]}^-(b+\eta)})
-F(r_{[0, t]}^{a\to y}|_{K_{[0, t]}^-(b)}\oplus_t H_{[t, 1]}^{y\to b})\right]\right| \\
&\qquad \qquad \times 
\frac{q^{(b+\eta)}_1(0, a, t, y)q^{(b+\eta)}_1(t, y, 1, b)}{q^{(b+\eta)}_1(0, a, 1, b)} 1_{[0,b+\eta]}(y) dy.
\end{align*}
We have 
\begin{align}
&\sup_{\substack{\eta>0\\ y\in (0, b+\eta)}}
\left|E\left[F(r_{[0, t]}^{a\to y}|_{K_{[0, t]}^-(b+\eta)}\oplus_t r_{[t, 1]}^{y\to b}|_{K_{[t, 1]}^-(b+\eta)})
-F(r_{[0, t]}^{a\to y}|_{K_{[0, t]}^-(b)}\oplus_t H_{[t, 1]}^{y\to b})\right]\right| \notag \\
&\quad \leq 2\sup_{w\in C([0,1], \mathbb{R})}\vert F(w)\vert  < \infty .
\label{Ineq_I1_bounded}
\end{align}
By Theorem~\ref{Thm_MainResult2} and Lemma~\ref{Ap_Lem_weakconv_product}, it holds that 
\begin{equation}\label{Eq_conv_product}
\lim_{\eta\downarrow 0}E\left[F(r_{[0, t]}^{a\to y}|_{K_{[0, t]}^-(b+\eta)}\oplus_t r_{[t, 1]}^{y\to b}|_{K_{[t, 1]}^-(b+\eta)})\right]
=E\left[F(r_{[0, t]}^{a\to y}|_{K_{[0, t]}^-(b)}\oplus_t H_{[t, 1]}^{y\to b})\right].
\end{equation}
In addition, by Lemmas~\ref{Lem_Esti_q_first} and~\ref{Lem_Esti_q}, and for $\eta \in (0, \eta_0)$ and $y\in[0, b+\eta]$, we obtain 
\begin{align}
&\frac{q^{(b+\eta)}_1(0, a, t, y)q^{(b+\eta)}_1(t, y, 1, b)}{q^{(b+\eta)}_1(0, a, 1, b)}\notag \\
&\quad \leq \frac{2}{\eta q^{(b)}_2(1, a)}
\left(\frac{C_{\nu, b}}{t^{\nu+1}}n_t(y-a)\right)
\left(\frac{C_{\nu, b}}{(1-t)^{\nu+1}}\left(1\wedge \frac{2\eta(b+\eta)}{1-t}\right)n_{1-t}(y-b)\right)\notag\\
&\quad \leq \frac{2(b+1)C_{\nu, b}^2}{\pi t^{\nu+3/2}(1-t)^{\nu+5/2} q^{(b)}_2(1, a)},
\label{Ineq_cond_bridge_dens}
\end{align}
and by Proposition~\ref{prop_moving_density}, it holds that 
\begin{equation}\label{Eq_cond_bridge_dens}
\lim_{\eta \downarrow 0}
\frac{q^{(b+\eta)}_1(0, a, t, y)q^{(b+\eta)}_1(t, y, 1, b)}{q^{(b+\eta)}_1(0, a, 1, b)}
=
\frac{q^{(b)}_1(0, a, t, y)q^{(b)}_2(1-t, y)}{q^{(b)}_2(1, a)}.
\end{equation}
Therefore, according to \eqref{Ineq_I1_bounded}, \eqref{Eq_conv_product}, \eqref{Ineq_cond_bridge_dens}, \eqref{Eq_cond_bridge_dens}, and Lebesgue's dominated convergence theorem, 
\[I_t^{(1)}(\eta)\to 0, \quad \eta\downarrow0.\]
Next, we consider 
\begin{align*}
I_t^{(2)}(\eta)=
\left\vert
\int_0^{\infty}
E\left[F(r_{[0, t]}^{a\to y}|_{K_{[0, t]}^-(b)}\oplus_t H_{[t, 1]}^{y\to b})\right]\left(P\left(r^{a\to b}|_{K^-(b+\eta)}(t)\in dy\right)-P\left(H^{a\to b}(t)\in dy\right)\right)
\right\vert .
\end{align*} 
We have 
\begin{align*}
\sup_{y>0}
\left|E\left[F(r_{[0, t]}^{a\to y}|_{K_{[0, t]}^-(b)}\oplus_t H_{[t, 1]}^{y\to b})\right]\right|
\leq \sup_{w\in C([0,1], \mathbb{R})}\left|F(w)\right|<\infty .
\end{align*}
Then, by \eqref{Ineq_cond_bridge_dens}, \eqref{Eq_cond_bridge_dens}, and Lebesgue's dominated convergence theorem, 
\begin{align*}
&\lim_{\eta\downarrow 0}
\int_0^{\infty}
E\left[F(r_{[0, t]}^{a\to y}|_{K_{[0, t]}^-(b)}\oplus_t H_{[t, 1]}^{y\to b})\right]
P\left(r^{a\to b}|_{K^-(b+\eta)}(t)\in dy\right)\\
&\quad =\int_0^{\infty}
E\left[F(r_{[0, t]}^{a\to y}|_{K_{[0, t]}^-(b)}\oplus_t H_{[t, 1]}^{y\to b})\right]
P\left(H^{a\to b}(t)\in dy\right).
\end{align*}
Therefore, it follows that 
\[I_t^{(2)}(\eta)\to 0,\quad \eta\downarrow 0.\]
Thus, we prove \eqref{Eq_lim_right} and the proof is completed.
\qed

\begin{lem}\label{Rem_F_canbetakento_Indicator_Moving}
Let $B\in \mathcal{B}(C([0,1], \mathbb{R}))$. 
Then, Theorem~\ref{Thm_Decomp_flat_Moving}
holds true for $F=1_B$. 
\end{lem}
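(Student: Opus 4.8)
The plan is to upgrade the decomposition formula from bounded continuous integrands to indicators by a Dynkin $\pi$--$\lambda$ (monotone class) argument. Write $\mu_t(dy) := P(H^{a\to b}(t)\in dy)$ and, for a bounded Borel function $F$ on $C([0,1],\mathbb{R})$, abbreviate the right-hand side of Theorem~\ref{Thm_Decomp_flat_Moving} as
\[
\Phi(F) := \int_0^b E\!\left[F\bigl(r_{[0,t]}^{a\to y}|_{K_{[0,t]}^-(b)}\oplus_t H_{[t,1]}^{y\to b}\bigr)\right]\mu_t(dy),
\]
with $r_{[0,t]}^{a\to y}|_{K_{[0,t]}^-(b)}$ and $H_{[t,1]}^{y\to b}$ independent. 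Let $\mathcal{D}$ be the family of Borel sets $B$ for which $E[1_B(H^{a\to b})]=\Phi(1_B)$; the goal is to show $\mathcal{D}=\mathcal{B}(C([0,1],\mathbb{R}))$.

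First I would check that the whole space lies in $\mathcal{D}$: for $F\equiv 1$ the inner expectation equals $1$, so $\Phi(1)=\mu_t((0,b))$, and by Theorem~\ref{Thm_MainResult2} the density of $\mu_t$ is $h_b(0,a,t,\cdot)$, whose integral over $(0,b)$ equals $1$ by Proposition~\ref{Prop_int_h}; hence $\Phi(1)=1=E[1(H^{a\to b})]$. Both $B\mapsto E[1_B(H^{a\to b})]$ and $\Phi$ are linear in the integrand, so closure of $\mathcal{D}$ under complementation follows from $1_{B^c}=1-1_B$ together with the whole-space case. For a countable disjoint union $B=\bigsqcup_n B_n$ with $B_n\in\mathcal{D}$, the partial sums $\sum_{k\le n}1_{B_k}$ increase to $1_B$ and are bounded by $1$; applying dominated convergence to $E[\,\cdot\,(H^{a\to b})]$ and, inside $\Phi$, first to the inner expectation and then to the $\mu_t$-integral (dominating constant $1$) shows that $1_B$ inherits the identity. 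Thus $\mathcal{D}$ is a $\lambda$-system.

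Next I would show every open set $G$ belongs to $\mathcal{D}$. Setting $F_n(w):=1\wedge\bigl(n\,d_\infty(w,G^c)\bigr)$ (with $F_n\equiv 1$ when $G$ is the whole space), one has $F_n\in C([0,1],\mathbb{R})$, $0\le F_n\uparrow 1_G$ pointwise, and $\|F_n\|_\infty\le 1$. Each $F_n$ is bounded and continuous, so Theorem~\ref{Thm_Decomp_flat_Moving} gives $E[F_n(H^{a\to b})]=\Phi(F_n)$; letting $n\to\infty$ and passing the limit through both sides exactly as in the preceding paragraph yields $E[1_G(H^{a\to b})]=\Phi(1_G)$, that is $G\in\mathcal{D}$. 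Since the open subsets of $C([0,1],\mathbb{R})$ form a $\pi$-system generating $\mathcal{B}(C([0,1],\mathbb{R}))$, Dynkin's $\pi$--$\lambda$ theorem gives $\mathcal{D}=\mathcal{B}(C([0,1],\mathbb{R}))$, which is the assertion.

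The one genuinely analytic point, and the step I expect to require the most care, is the dominated-convergence interchange inside $\Phi$: one must know that the integrand $y\mapsto E[F(r_{[0,t]}^{a\to y}|_{K_{[0,t]}^-(b)}\oplus_t H_{[t,1]}^{y\to b})]$ is $\mu_t$-measurable and uniformly bounded. Measurability is clear for continuous $F$ (as already used in the proof of Theorem~\ref{Thm_Decomp_flat_Moving}) and is preserved under the monotone limits appearing above, while uniform boundedness by $\|F\|_\infty$ supplies an integrable dominating function, $\mu_t$ being a probability measure. With these routine verifications in hand the interchange is legitimate, and no further obstacle arises.
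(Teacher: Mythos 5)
Your proposal is correct and follows essentially the same route as the paper: approximate the indicator of a generating $\pi$-system (you use open sets and increasing continuous approximations $1\wedge(n\,d_\infty(w,G^c))$; the paper uses closed sets and decreasing approximations $\phi(n\,d_\infty(w,A))$), pass to the limit on both sides of the decomposition formula by monotone/dominated convergence, and conclude with Dynkin's $\pi$--$\lambda$ theorem. Your write-up just spells out the $\lambda$-system verification that the paper leaves implicit.
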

\begin{proof}
Let $A$ be a closed subset of $C([0,1], \mathbb{R})$ 
and let
\begin{align*}
\phi(x):=1-\int_0^1 1_{(-\infty, x]}(u)du, \qquad x\in \mathbb{R}.
\end{align*}
Then, for $w\in C([0,1],\mathbb{R})$, we have
\begin{align*}
&F_n(w):= \phi(nd_{\infty}(w,A))\downarrow 1_A(w), \qquad n\to \infty,
\end{align*}
where 
\[
d_{\infty}(w, A):=\inf\{\|w-v\|_{C([0, 1], \mathbb{R})}\mid v\in A\}.
\]
Therefore, by Lebesgue's dominated convergence theorem 
and Dynkin's $\pi$-$\lambda$ theorem, we can obtain our assertion.
\end{proof}

\begin{lem}\label{30}
Let $0\leq a<b$. For $0<z\leq x \leq b$ and $t \in (0,1)$, 
\begin{align*}
&P\left( \max_{u\in [0,t]}H^{a \to b}(u) = x \right) = 0, \\
&P\left( \max_{u\in [0,t]}H^{a \to b}(u) \leq x, H^{a \to b}(t) \leq z \right)
=\int_0^z \dfrac{q^{(x)}_1(0, a, t,y)q^{(b)}_2(1-t,y)}{q^{(b)}_2(1, a)}dy.
\end{align*}
\end{lem}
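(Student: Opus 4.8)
The plan is to derive the second (integral) identity directly from the decomposition formula, and then to read off the first (atomlessness) identity from the continuity of the resulting distribution function.

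First I would fix $0<z\le x\le b$ and $t\in(0,1)$, and set
\[
B:=\left\{w\in C([0,1],\mathbb{R})\ \middle|\ \max_{u\in[0,t]}w(u)\le x,\ w(t)\le z\right\}.
\]
Since the maps $w\mapsto\max_{u\in[0,t]}w(u)$ and $w\mapsto w(t)$ are continuous, $B$ is closed, hence Borel, so Lemma~\ref{Rem_F_canbetakento_Indicator_Moving} lets me apply Theorem~\ref{Thm_Decomp_flat_Moving} with $F=1_B$:
\[
P\left(H^{a\to b}\in B\right)=\int_0^b P\left(r_{[0,t]}^{a\to y}|_{K_{[0,t]}^-(b)}\oplus_t H_{[t,1]}^{y\to b}\in B\right)P\left(H^{a\to b}(t)\in dy\right).
\]
For the concatenated path $W:=r_{[0,t]}^{a\to y}|_{K_{[0,t]}^-(b)}\oplus_t H_{[t,1]}^{y\to b}$ one has $W(t)=y$ and $\max_{u\in[0,t]}W(u)=\max_{u\in[0,t]}r_{[0,t]}^{a\to y}|_{K_{[0,t]}^-(b)}(u)$, so the event $\{W\in B\}$ factors into the deterministic condition $\{y\le z\}$ and an event that depends only on the first piece. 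Using $x\le b$ to rewrite the conditioned maximum as a ratio and the definition of $q^{(\cdot)}_1$, this gives
\[
P\left(W\in B\right)=1_{\{y\le z\}}\,\frac{P\left(M_{[0,t]}(r_{[0,t]}^{a\to y})\le x\right)}{P\left(M_{[0,t]}(r_{[0,t]}^{a\to y})\le b\right)}=1_{\{y\le z\}}\,\frac{q^{(x)}_1(0,a,t,y)}{q^{(b)}_1(0,a,t,y)}.
\]
Substituting the law $P(H^{a\to b}(t)\in dy)=q^{(b)}_1(0,a,t,y)\,q^{(b)}_2(1-t,y)/q^{(b)}_2(1,a)\,dy$ from Theorem~\ref{Thm_MainResult2} and cancelling $q^{(b)}_1(0,a,t,y)$ (which is positive for $y\in(0,z)\subset(0,b)$) yields the second identity.

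For the first identity I would take $z=x$. Since $\{\max_{u\in[0,t]}H^{a\to b}(u)\le x\}\subset\{H^{a\to b}(t)\le x\}$, the second identity reduces to
\[
\Phi(x):=P\left(\max_{u\in[0,t]}H^{a\to b}(u)\le x\right)=\int_0^x\frac{q^{(x)}_1(0,a,t,y)\,q^{(b)}_2(1-t,y)}{q^{(b)}_2(1,a)}\,dy,\qquad x\in(0,b].
\]
The only $x$-dependence of the integrand is through the factor $P(M_{[0,t]}(r_{[0,t]}^{a\to y})\le x)$ inside $q^{(x)}_1$, which is continuous in $x$ for $y<x$ by Theorem~\ref{Thm_max_dist_of_BES_bridge} and Proposition~\ref{Prop_Diff_max_dist_of_BES_bridge}. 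Rewriting $\Phi(x)$ as the integral over $(0,b)$ of the same integrand times $1_{(0,x)}(y)$ and bounding that factor by $1$ produces an $x$-independent dominating function, so Lebesgue's dominated convergence theorem gives $\Phi(x')\to\Phi(x)$ as $x'\to x$. The continuity of $\Phi$ then forces $P(\max_{u\in[0,t]}H^{a\to b}(u)=x)=\Phi(x)-\Phi(x-)=0$, which is the first identity.

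The decomposition step is routine once the factorization of $\{W\in B\}$ at the junction time $t$ is observed, so I expect the main obstacle to be the dominated-convergence argument for the continuity of $\Phi$. There I must verify that the dominating function $\frac{1}{q^{(b)}_2(1,a)}\frac{P(R^a(t)\in dy)}{dy}\,q^{(b)}_2(1-t,y)$ is integrable near $y=0$: this holds because $\frac{P(R^a(t)\in dy)}{dy}$ vanishes like $y^{2\nu+1}$ (integrable since $\nu>-1$) while $q^{(b)}_2(1-t,\cdot)$ stays bounded near $0$ by the series representation in Corollary~\ref{Cor_Diff_max_dist_of_BES_bridge}, and because the moving boundary $\{y=x\}$ is Lebesgue-null and hence does not affect the pointwise limit of the integrand.
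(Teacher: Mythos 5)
Your proof is correct, and for the second identity it is essentially the paper's argument: apply Theorem~\ref{Thm_Decomp_flat_Moving} with $F=1_B$ via Lemma~\ref{Rem_F_canbetakento_Indicator_Moving}, observe that the event factors at the junction time $t$ into $\{y\le z\}$ and a condition on the first piece only, rewrite the conditioned maximum as the ratio $P(M_{[0,t]}(r_{[0,t]}^{a\to y})\le x)/P(M_{[0,t]}(r_{[0,t]}^{a\to y})\le b)$, and cancel against the density of $H^{a\to b}(t)$. Where you diverge is the first identity. The paper applies the decomposition formula once more, directly to the closed set $A_1=\{w:\max_{u\in[0,t]}w(u)=x\}$, and kills the integrand by noting that $P(r_{[0,t]}^{a\to y}\in\partial K_{[0,t]}^-(x))=0$, which is immediate from the differentiability in the barrier level established in Proposition~\ref{Prop_Diff_max_dist_of_BES_bridge}. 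You instead set $z=x$ in the second identity and prove that $\Phi(x)=P(\max_{u\in[0,t]}H^{a\to b}(u)\le x)$ is continuous in $x$ by dominated convergence, reading off the absence of atoms from $\Phi(x)-\Phi(x-)=0$. This works, and your domination (bounding the max-probability factor by $1$ and checking integrability of $y\mapsto \frac{P(R^a(t)\in dy)}{dy}q_2^{(b)}(1-t,y)$ near $0$ and $b$) is sound; but note that the pointwise convergence $q_1^{(x')}(0,a,t,y)\to q_1^{(x)}(0,a,t,y)$ as $x'\uparrow x$ is precisely the statement that the maximum of the unconditioned bridge has no atom at $x$, i.e.\ the same fact from Theorem~\ref{Thm_max_dist_of_BES_bridge} and Proposition~\ref{Prop_Diff_max_dist_of_BES_bridge} that the paper invokes directly. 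So your route is a correct but slightly more roundabout repackaging of the same ingredient; the paper's one-line application of the decomposition formula to $\partial K_{[0,t]}^-(x)$ avoids the dominated-convergence step entirely.
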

\begin{proof}
Let $A_i$ $(i=1,2)$ be closed subsets of $C([0,1], \mathbb{R})$ given by
\begin{align*}
&A_1:=\left\{ w \in C([0,1], \mathbb{R})\ \big\vert \ \max_{u\in [0,t]}w(u)= x \right\}, \\
&A_2:=\left\{ w \in C([0,1], \mathbb{R})
\ \big\vert \ \max_{u\in [0,t]}w(u)\leq x, \ w(t)\leq z \right\}.
\end{align*}
Lemma~\ref{Rem_F_canbetakento_Indicator_Moving} implies that 
Theorem~\ref{Thm_Decomp_flat_Moving} 
can be applied for $F=1_{A_i}$ $(i=1,2)$. 
Thus, we obtain
\begin{align}
&P\left( M_t(H^{a \to b}) = x \right)
=\int_0^x P\left( r_{[0,t]}^{a \to y}|_{K_{[0,t]}^-(b)}\in \partial K_{[0,t]}^-(x) \right) 
P\left( H^{a \to b}(t) \in dy \right), 
\label{flatMoving_Decomp_Mt_eq_x}\\
&P\left( M_t(H^{a \to b}) \leq x, H^{a \to b}(t) \leq z \right)\notag \\
&\quad =\int_0^z 
P\left( r_{[0,t]}^{a \to y}|_{K_{[0,t]}^-(b)}\in K_{[0,t]}^-(x),
\ r_{[0,t]}^{a \to y}|_{K_{[0,t]}^-(b)}(t)\leq z \right) 
P\left( H^{a \to b}(t) \in dy \right).
\label{flatMoving_Decomp_Ht_leq_z_Mt_leq_x}
\end{align}
By Proposition ~\ref{Prop_Diff_max_dist_of_BES_bridge}
and \eqref{flatMoving_Decomp_Mt_eq_x}, we obtain
\begin{align*}
P\left( M_t(H^{a \to b}) = x \right)
=\int_0^x 
\frac{P\left( r_{[0,t]}^{a \to y} \in \partial K_{[0,t]}^-(x) \right)}
{P\left( r_{[0,t]}^{a \to y} \in K_{[0,t]}^-(b)\right)}
P\left( H^{a \to b}(t) \in dy \right)=0.
\end{align*}
Furthermore, 
\eqref{flatMoving_Decomp_Ht_leq_z_Mt_leq_x} implies that
\begin{align*}
&P\left( M_t(H^{a \to b}) \leq x, H^{a \to b}(t) \leq z \right)\notag \\
&\quad =\int_0^z 
\frac{P\left( r_{[0,t]}^{a \to y} \in K_{[0,t]}^-(x),\ r_{[0,t]}^{a \to y}(t)\leq z \right)}
{P\left( r_{[0,t]}^{a \to y} \in K_{[0,t]}^-(b)\right)}
\dfrac{q^{(b)}_1(0, a, t,y)q^{(b)}_2(1-t,y)}{q^{(b)}_2(1, a)} dy\\
&\quad =\int_0^z 
\frac{P\left( r_{[0,t]}^{a \to y} \in K_{[0,t]}^-(x) \right)}
{P\left( r_{[0,t]}^{a \to y} \in K_{[0,t]}^-(b)\right)}
\dfrac{q^{(b)}_1(0, a, t,y)q^{(b)}_2(1-t,y)}{q^{(b)}_2(1, a)}dy\\
&\quad =\int_0^z 
\dfrac{q^{(x)}_1(0, a, t,y)q^{(b)}_2(1-t,y)}{q^{(b)}_2(1, a)}dy .
\end{align*}
\end{proof}

\subsection{Proof of Proposition~\ref{Prop_ae_Property_of_Moving}}

Let $t\in (0,1)$. Lemma~\ref{30} implies that 
\begin{align*}
&P\left( M_t(H^{a \to b}) = b \right) = 0, \\
&P\left( M_t(H^{a \to b}) \leq b \right) 
= P\left( M_t(H^{a \to b}) \leq b, H^{a \to b}(t) \leq b \right) 
= \int_0^b P\left( H^{a \to b}(t) \in dy \right) = 1.
\end{align*}
Therefore, $P( M_t(H^{a \to b}) < b) = 1$
holds and Proposition~\ref{Prop_ae_Property_of_Moving} is obtained. 
Proposition~\ref{Prop_ae_Property_of_Moving} implies that 
Bessel house-moving $H^{a \to b}$ 
does not hit $b$ on the time interval $[0, 1)$.

\section{Proof of Theorem~\ref{Thm_abs_conti}}
\label{Section_abso_conti}
For $t>0$ and $x, y\in[0, \infty)$, we set 
\begin{align*}
p(t; x, y):=\dfrac{P\left(R^x(t)\in dy\right)}{dy}.
\end{align*}
In addition, we denote the expectation with respect to a probability $Q$ by $E^Q$.

First, we prepare two lemmas. 
\begin{lem}
\label{Lem_abs_conti_BES_process_and_BES_bridge}
Let $0\leq a<b$. For $t\in(0, 1)$, we have
\[\frac{dP_t^{r^{a\to b}}}{dP_t^{R^a}}(w)=\frac{p(1-t; w(t), b)}{p(1; a, b)},\quad w\in C([0, t], \mathbb{R}).\]
\end{lem}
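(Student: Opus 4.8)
The plan is to realize the claimed density as an honest probability measure and then to match it with $P_t^{r^{a\to b}}$ on a generating $\pi$-system. Write $\rho(w):=p(1-t;w(t),b)/p(1;a,b)$ for $w\in C([0,t],\mathbb{R})$ and define the measure $Q$ on $(C([0,t],\mathbb{R}),\mathcal{B}(C([0,t],\mathbb{R})))$ by $dQ=\rho\,dP_t^{R^a}$. First I would check that $Q$ is a probability measure: since $p(1;a,b)=\int_0^\infty p(t;a,y)p(1-t;y,b)\,dy$ by the Chapman--Kolmogorov identity, integrating $\rho$ against the law of $R^a(t)$ gives $E[\rho(\pi_{[0,t]}R^a)]=1$. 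The goal is then $Q=P_t^{r^{a\to b}}$, which by Dynkin's $\pi$--$\lambda$ theorem follows once the two probability measures agree on the $\pi$-system of cylinder sets $\{w:w(t_1)\in B_1,\dots,w(t_k)\in B_k\}$ with $0<t_1<\cdots<t_k=t$ and $B_i\in\mathcal{B}(\mathbb{R})$; this system generates $\mathcal{B}(C([0,t],\mathbb{R}))$ because the coordinate maps are continuous and $C([0,t],\mathbb{R})$ is separable (one may always append the endpoint $t$ to the partition).

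For the agreement on cylinders I would use the Markov property of $r^{a\to b}$ together with the marginal and transition densities \eqref{BESbridge_density} and \eqref{BESbridge_transdensity}. They give for $0<t_1<\cdots<t_k=t$ the joint density
\[
\frac{p(t_1;a,y_1)\,p(1-t_1;y_1,b)}{p(1;a,b)}
\prod_{i=2}^{k}\frac{p(t_i-t_{i-1};y_{i-1},y_i)\,p(1-t_i;y_i,b)}{p(1-t_{i-1};y_{i-1},b)}.
\]
Here the factors $p(1-t_i;y_i,b)$ in the numerators telescope against the denominators $p(1-t_{i-1};y_{i-1},b)$, leaving
\[
\Bigl(p(t_1;a,y_1)\prod_{i=2}^{k}p(t_i-t_{i-1};y_{i-1},y_i)\Bigr)\frac{p(1-t_k;y_k,b)}{p(1;a,b)}.
\]
The parenthesized product is precisely the joint density of $(R^a(t_1),\dots,R^a(t_k))$, and since $t_k=t$ the trailing factor is $\rho$ evaluated at the final coordinate $y_k=w(t)$. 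Integrating over $B_1\times\cdots\times B_k$ therefore shows that the $r^{a\to b}$-probability of the cylinder set equals its $Q$-probability, as required.

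The content of the argument is bookkeeping rather than analysis, so the main points needing care are measure-theoretic. I would make sure that the cylinder $\sigma$-algebra really is the Borel $\sigma$-algebra (so that agreement on cylinders pins down the measure), that $\rho$ is a bona fide finite density---guaranteed by the strict positivity and continuity of $p(1-t;\cdot,b)$ on $(0,\infty)$ together with $p(1;a,b)>0$ for $t\in(0,1)$---and that the telescoping is legitimate, which it is since each conditional density in \eqref{BESbridge_transdensity} is a genuine transition kernel. Once these are in place, the identity $Q=P_t^{r^{a\to b}}$ and reading off the density yield $dP_t^{r^{a\to b}}/dP_t^{R^a}(w)=\rho(w)=p(1-t;w(t),b)/p(1;a,b)$.
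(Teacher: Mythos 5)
Your proposal is correct, but it takes a different route from the paper. The paper's proof is a one-step disintegration: for a Borel set $A\subset C([0,t],\mathbb{R})$ it writes $P_t^{r^{a\to b}}(A)$ as $P^{R^a}(\pi_{[0,t]}^{-1}(A),\,w(1)\in db)/P^{R^a}(w(1)\in db)$, applies the Markov property of $R^a$ at time $t$ to replace the event $\{w(1)\in db\}$ by the conditional density $p(1-t;w(t),b)\,db$ inside the expectation, and reads off the Radon--Nikodym derivative directly for arbitrary $A$ --- no monotone class argument is needed. You instead define the candidate measure $dQ=\rho\,dP_t^{R^a}$, check normalization via Chapman--Kolmogorov, and verify agreement with $P_t^{r^{a\to b}}$ on finite-dimensional cylinders by telescoping the explicit bridge transition densities \eqref{BESbridge_density} and \eqref{BESbridge_transdensity}, then invoke Dynkin's $\pi$--$\lambda$ theorem. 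Your telescoping is correct and the cylinder $\sigma$-algebra does generate $\mathcal{B}(C([0,t],\mathbb{R}))$, so the argument goes through; it has the advantage of relying only on the stated density formulas rather than on the informal ``$\in db$'' conditioning manipulation, at the cost of being longer and of implicitly using the Markov property of the bridge (which the paper also takes as known) to assemble the joint density. Both proofs ultimately rest on the same cancellation of the factors $p(1-t_i;y_i,b)$; the paper just performs it once, at the level of the whole path up to time $t$.
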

\begin{proof}
Let $A\in \mathcal{B}(C([0, t],\mathbb{R}))$ be fixed. 
By the Markov property of $R^a$, we obtain the assertion as follows: 
\begin{align*}
P_t^{r^{a\to b}}(A)
&=\frac{P^{R^a}\left(\pi_{[0, t]}^{-1}(A), w(1)\in db\right)}{P^{R^a}\left(w(1)\in db\right)}\\
&=\frac{E^{R^a}\left[1_{\pi_{[0, t]}^{-1}(A)}(w)\cdot P^{R^a}\left(w(1)\in db~|~w(t)\right)\right]}{P^{R^a}\left(w(1)\in db\right)}\\
&=\int_{\pi_{[0, t]}^{-1}(A)}\frac{P^{R^a}\left(w(1)\in db~|~w(t)\right)}{P^{R^a}\left(w(1)\in db\right)}P^{R^a}\left(dw\right)\\
&=\int_{A}\frac{p(1-t; w(t), b)}{p(1; a, b)}P_t^{R^a}\left(dw\right).
\end{align*}
\end{proof}

\begin{lem}\label{Lem_RN_dens}
Let $0\leq a<b$ and $t\in (0, 1)$. For every bounded continuous functions $F$ on $C([0, t], \mathbb{R})$, it holds that 
\[
E^{P_t^{H^{a\to b}}}[F]=\int_{C([0, t], \mathbb{R})}F(w)\cdot \frac{q^{(b)}_2(1-t, w(t))}{q^{(b)}_2(1, a)}\cdot 1_{K_{[0, t]}^-(b)}(w)P_t^{R^a}(dw).\]
\end{lem}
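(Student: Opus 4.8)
The plan is to deduce this lemma directly from the decomposition formula (Theorem~\ref{Thm_Decomp_flat_Moving}), which has already been established, rather than re-running the $\eta\downarrow0$ limit from scratch. The key observation is that the functional $F$ here depends only on the path restricted to $[0,t]$, so the post-$t$ factor $H_{[t,1]}^{y\to b}$ appearing in the decomposition formula is irrelevant and is integrated out trivially.

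First I would set $\widetilde{F}:=F\circ\pi_{[0,t]}$, which is a bounded continuous function on $C([0,1],\mathbb{R})$ since $\pi_{[0,t]}$ is continuous and $F$ is bounded continuous. Because $\pi_{[0,t]}(w_1\oplus_t w_2)=w_1$, we have $\widetilde{F}(w_1\oplus_t w_2)=F(w_1)$, so applying Theorem~\ref{Thm_Decomp_flat_Moving} to $\widetilde{F}$ gives
\[
E^{P_t^{H^{a\to b}}}[F]=E\left[\widetilde{F}(H^{a\to b})\right]=\int_0^b E\left[F\left(r_{[0,t]}^{a\to y}|_{K_{[0,t]}^-(b)}\right)\right]P\left(H^{a\to b}(t)\in dy\right).
\]
Next I would substitute the marginal law from Theorem~\ref{Thm_MainResult2}, namely $P(H^{a\to b}(t)\in dy)=q_1^{(b)}(0,a,t,y)q_2^{(b)}(1-t,y)q_2^{(b)}(1,a)^{-1}\,dy$, and expand the conditioned expectation as
\[
E\left[F\left(r_{[0,t]}^{a\to y}|_{K_{[0,t]}^-(b)}\right)\right]=\frac{E\left[F(r_{[0,t]}^{a\to y})\,1_{K_{[0,t]}^-(b)}(r_{[0,t]}^{a\to y})\right]}{P\left(M_{[0,t]}(r_{[0,t]}^{a\to y})\le b\right)}.
\]
By the very definition of $q_1^{(b)}$ we have $q_1^{(b)}(0,a,t,y)=p(t;a,y)\,P(M_{[0,t]}(r_{[0,t]}^{a\to y})\le b)$, so this denominator cancels exactly, leaving
\[
E^{P_t^{H^{a\to b}}}[F]=\int_0^b E\left[F(r_{[0,t]}^{a\to y})\,1_{K_{[0,t]}^-(b)}(r_{[0,t]}^{a\to y})\right]\frac{q_2^{(b)}(1-t,y)}{q_2^{(b)}(1,a)}\,p(t;a,y)\,dy.
\]

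Finally I would recognize the right-hand side as the claimed integral against $P_t^{R^a}$ by disintegrating the law of $R^a$ on $[0,t]$ over its terminal value $R^a(t)=y$. Since $r_{[0,t]}^{a\to y}$ is precisely $R^a|_{[0,t]}$ conditioned on $R^a(t)=y$ and $P(R^a(t)\in dy)=p(t;a,y)\,dy$, the conditional expectation, given $w(t)=y$, of the integrand $F(w)\,q_2^{(b)}(1-t,w(t))\,q_2^{(b)}(1,a)^{-1}1_{K_{[0,t]}^-(b)}(w)$ equals $q_2^{(b)}(1-t,y)q_2^{(b)}(1,a)^{-1}E[F(r_{[0,t]}^{a\to y})1_{K_{[0,t]}^-(b)}(r_{[0,t]}^{a\to y})]$, because $r_{[0,t]}^{a\to y}(t)=y$ almost surely. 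Integrating against $p(t;a,y)\,dy$ then reproduces the last display and hence the assertion. The reduction of the domain from $(0,\infty)$ to $(0,b)$ is automatic: for $y\ge b$ the bridge $r_{[0,t]}^{a\to y}$ terminates at or above $b$, so the factor $E[F(r_{[0,t]}^{a\to y})1_{K_{[0,t]}^-(b)}(r_{[0,t]}^{a\to y})]$ vanishes there (and $\{y=b\}$ is $dy$-null), which is also why $q_2^{(b)}(1-t,w(t))$ need only be evaluated on $\{w(t)<b\}$.

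I expect the main obstacle to be purely bookkeeping rather than analytic: verifying the cancellation of $P(M_{[0,t]}(r_{[0,t]}^{a\to y})\le b)$ against the corresponding factor inside $q_1^{(b)}$, and making the disintegration of $P_t^{R^a}$ rigorous through a regular conditional distribution together with the identification of the conditioned process with the Bessel bridge. No new estimates are required, since Theorem~\ref{Thm_Decomp_flat_Moving} and Theorem~\ref{Thm_MainResult2} already carry all the analytic content.
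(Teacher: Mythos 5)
Your proposal is correct, but it takes a genuinely different route from the paper. The paper does not invoke Theorem~\ref{Thm_Decomp_flat_Moving} at all: it first computes, for each fixed $\eta>0$, the conditional law $P^{r^{a\to b}}\bigl(\pi_{[0,t]}^{-1}(A)\mid K^-(b+\eta)\bigr)$ explicitly via the Markov property of the bridge together with Lemma~\ref{Lem_abs_conti_BES_process_and_BES_bridge}, obtaining the density $\frac{q^{(b+\eta)}_1(t,w(t),1,b)}{q^{(b+\eta)}_1(0,a,1,b)}\,1_{K_{[0,t]}^-(b+\eta)}(w)$ with respect to $P_t^{R^a}$, and then lets $\eta\downarrow 0$ using the uniform bounds of Lemmas~\ref{Lem_Esti_q_first} and~\ref{Lem_Esti_q}, dominated convergence, and the weak convergence of Theorem~\ref{Thm_MainResult2}. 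You instead bypass the limiting procedure entirely by applying the already-proven decomposition formula to $\widetilde F=F\circ\pi_{[0,t]}$, cancelling $P\bigl(M_{[0,t]}(r_{[0,t]}^{a\to y})\le b\bigr)$ against the corresponding factor inside $q_1^{(b)}(0,a,t,y)$, and disintegrating $P_t^{R^a}$ over the terminal value $R^a(t)=y$; there is no circularity, since Theorem~\ref{Thm_Decomp_flat_Moving} is established independently of Section~\ref{Section_abso_conti}. Each step checks out: $\widetilde F$ is bounded continuous, $\pi_{[0,t]}(w_1\oplus_t w_2)=w_1$ kills the post-$t$ factor, the indicator $1_{K_{[0,t]}^-(b)}$ confines $w(t)$ to $[0,b]$ so that $q_2^{(b)}(1-t,w(t))$ is well defined and the $y$-integral collapses to $(0,b)$ up to a $dy$-null set, and the disintegration is exactly the identity the paper itself uses (for bounded measurable functionals) in the proof of Theorem~\ref{Thm_Decomp_flat_Moving}. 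What your route buys is brevity and the avoidance of a second dominated-convergence argument; what the paper's route buys is that the lemma is obtained directly from the weak-convergence construction with explicit $\eta$-uniform control, which is arguably more robust if one wanted the density statement without first proving the full path-space decomposition. The only point to make rigorous, as you note, is the regular conditional distribution identifying $R^a|_{[0,t]}$ given $R^a(t)=y$ with $r_{[0,t]}^{a\to y}$ as measures on path space, which is at the same level of rigor the paper already assumes.
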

\begin{proof}
By the Markov property of $r^{a\to b}$ and Lemma~\ref{Lem_abs_conti_BES_process_and_BES_bridge}, for $A\in\mathcal{B}(C([0, t], \mathbb{R}))$, it holds that 
\begin{align*}
&P^{r^{a\to b}}\left(\pi_{[0, t]}^{-1}(A)~|~K^-(b+\eta)\right)\\
&\quad =\frac{P^{r^{a\to b}}\left(\pi_{[0, t]}^{-1}(A)\cap \pi_{[0, t]}^{-1}(K_{[0, t]}^-(b+\eta))\cap \pi_{[t, 1]}^{-1}(K_{[t, 1]}^-(b+\eta))\right)}
{P^{r^{a\to b}}\left(K^-(b+\eta)\right)}\\
&\quad =\int_{\pi_{[0, t]}^{-1}(A)}
\frac{P^{r^{a\to b}}\left(\pi_{[t, 1]}^{-1}(K_{[t, 1]}^-(b+\eta))~|~w(t)\right)}{P^{r^{a\to b}}\left(K^-(b+\eta)\right)}
1_{K_{[0, t]}^-(b+\eta)}(\pi_{[0, t]}\circ w) P^{r^{a\to b}}\left(dw\right)\\
&\quad =\int_{\pi_{[0, t]}^{-1}(A)}
\frac{P^{r^{a\to b}}\left(\pi_{[t, 1]}^{-1}(K_{[t, 1]}^-(b+\eta))~|~w(t)\right)p(1-t; w(t), b)}
{P^{r^{a\to b}}\left(K^-(b+\eta)\right)p(1; a, b)}
1_{K_{[0, t]}^-(b+\eta)}(\pi_{[0, t]}\circ w) P^{R^a}\left(dw\right)\\
&\quad =\int_{A}\frac{q^{(b+\eta)}_1(t, w(t), 1, b)}{q^{(b+\eta)}_1(0, a, 1, b)}1_{K_{[0, t]}^-(b+\eta)}(w)P_t^{R^a}\left(dw\right),\quad \eta>0.
\end{align*}
Then, for a bounded continuous function $F$ on $C([0, t], \mathbb{R})$, we obtain 
\begin{align*}
&\int_{C([0, 1], \mathbb{R})}F(\pi_{[0, t]}\circ w)P^{r^{a\to b}}\left(dw~|~K^-(b+\eta)\right)\\
&\quad=\int_{C([0, t], \mathbb{R})}
F(w)\frac{q^{(b+\eta)}_1(t, w(t), 1, b)}{q^{(b+\eta)}_1(0, a, 1, b)}
1_{K_{[0, t]}^-(b+\eta)}(w)P_t^{R^a}\left(dw\right),\quad \eta>0.
\end{align*}
By Lemmas~\ref{Lem_Esti_q_first} and~\ref{Lem_Esti_q}, and for $\eta \in (0, \eta_0)$, we obtain 
\begin{align*}
&\frac{q^{(b+\eta)}_1(t, w(t), 1, b)}{q^{(b+\eta)}_1(0, a, 1, b)} 
\leq \frac{4(b+\eta)C_{\nu, b}}{\sqrt{2\pi }q^{(b)}_2(1, a)}
\frac{1}{(1-t)^{\nu+5/2}}, \qquad w \in C([0, t], [0,b+\eta]).
\end{align*}
In addition, it holds that
\begin{align*}
\lim_{\eta\downarrow0}\frac{q^{(b+\eta)}_1(t, w(t), 1, b)}{q^{(b+\eta)}_1(0, a, 1, b)}1_{K_{[0, t]}^-(b+\eta)}(w)
=\frac{q^{(b)}_2(1-t, w(t))}{q^{(b)}_2(1, a)}1_{K_{[0, t]}^-(b)}(w), \qquad w\in C([0, t], [0,\infty)).
\end{align*}
Therefore, Lebesgue's dominated convergence theorem implies
\begin{align*}
&\lim_{\eta\downarrow0}
\int_{C([0, 1], \mathbb{R})}F(\pi_{[0, t]}\circ w)P^{r^{a\to b}}\left(dw~|~K^-(b+\eta)\right)\\
&\quad =\int_{C([0, t], \mathbb{R})}F(w)\frac{q^{(b)}_2(1-t, w(t))}{q^{(b)}_2(1, a)}1_{K_{[0, t]}^-(b)}(w)P_t^{R^a}\left(dw\right).
\end{align*}
According to this equality and Theorem~\ref{Thm_MainResult2}, it follows that 
\begin{align*}
E^{P_t^{H^{a\to b}}}[F]
&=\int_{C[0, 1], \mathbb{R})}F(\pi_{[0, t]}\circ w)P^{H^{a\to b}}(dw)\\
&=\lim_{\eta\downarrow0}\int_{C([0, 1], \mathbb{R})}F(\pi_{[0, t]}\circ w)P^{r^{a\to b}}\left(dw~|~K^-(b+\eta)\right)\\
&=\int_{C([0, t], \mathbb{R})}F(w)\frac{q^{(b)}_2(1-t, w(t))}{q^{(b)}_2(1, a)}1_{K_{[0, t]}^-(b)}(w)P_t^{R^a}\left(dw\right).
\end{align*}
Thus, the proof is completed.
\end{proof}

Now, we prove Theorem~\ref{Thm_abs_conti}. 
Let $A$ be a closed subset of $C([0, t], \mathbb{R})$.
In a similar manner to the proof of Lemma~\ref{Rem_F_canbetakento_Indicator_Moving}, 
by Lemma~\ref{Lem_RN_dens} and Lebesgue's dominated convergence theorem, it holds that 
\begin{align}
E^{P_t^{H^{a\to b}}}[1_A]
&=\int_{C([0, t], \mathbb{R})}1_A(w)\frac{q^{(b)}_2(1-t, w(t))}{q^{(b)}_2(1, a)}1_{K_{[0, t]}^-(b)}(w)P_t^{R^a}(dw).
\label{indicator_A_PtHatob}
\end{align}
Using \eqref{indicator_A_PtHatob} and Dynkin's $\pi$-$\lambda$ theorem, we can prove the assertion completely.
\qed


\section{Proof of Proposition~\ref{Prop_MainResult1}}\label{section_const_moving}

In this section, we prove Proposition~\ref{Prop_MainResult1}, 
which gives the characterization of the Bessel house-moving by using the first hitting time of the Bessel process.

\begin{lem}
\label{Lem_relation_fht_q}
Let $b>0$. For $t>0$ and $y\in (0, b)$, we have
\begin{align}
&\frac{P\left(\tau_{y, b}\in dt\right)}{dt}
=\frac{q^{(b)}_2(t, y)}{2}, 
\label{relation_q2_fht}\\
&\frac{P\left(\tau_{0, b}\in dt\right)}{dt}
=\frac{q^{(b)}_2(t, 0)}{2}. 
\label{relation_qbb_fht}
\end{align}
\end{lem}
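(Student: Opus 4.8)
The plan is to identify both sides through their Laplace transforms in $t$, exploiting that the proof of Theorem~\ref{Thm_max_dist_of_BES_bridge} already produced a closed form for the Laplace transform of $q_1^{(c)}(0,y,\cdot,b)$. Write $L(f)(\lambda):=\int_0^{\infty}e^{-\lambda t}f(t)\,dt$ and set $A:=\sqrt{2\lambda}$. The first ingredient is the classical first-passage identity
\[
E\!\left[e^{-\lambda\tau_{y,b}}\right]=\frac{\varphi_\lambda^{\uparrow}(y)}{\varphi_\lambda^{\uparrow}(b)}=\Bigl(\frac{b}{y}\Bigr)^{\nu}\frac{I_{\nu}(Ay)}{I_{\nu}(Ab)},\qquad 0<y<b,
\]
which follows by applying optional stopping at $\tau_{y,b}$ to the bounded martingale $t\mapsto e^{-\lambda t}\varphi_\lambda^{\uparrow}(R^y(t\wedge\tau_{y,b}))$ (here $\mathcal{L}_\delta\varphi_\lambda^{\uparrow}=\lambda\varphi_\lambda^{\uparrow}$, $\varphi_\lambda^{\uparrow}$ is given by \eqref{PitmanYor_23}, and $\tau_{y,b}<\infty$ a.s.).

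The second ingredient is $L\bigl(q_2^{(b)}(\cdot,y)\bigr)$. First I would write $q_1^{(c)}(0,y,t,b)=\tfrac{P(R^y(t)\in db)}{db}\,P\bigl(M_{[0,t]}(r_{[0,t]}^{y\to b})\leq c\bigr)$ and note that the Lebesgue density $\tfrac{P(R^y(t)\in db)}{db}$ equals $2b^{2\nu+1}$ times the symmetric (speed-measure) transition density of Theorem~\ref{Thm_max_dist_of_Diffusion_bridge}. Feeding $x=y\leq b<c$ into the Laplace identity derived inside the proof of Theorem~\ref{Thm_max_dist_of_BES_bridge} then gives
\[
L\bigl(q_1^{(c)}(0,y,\cdot,b)\bigr)(\lambda)=2b^{\nu+1}y^{-\nu}I_{\nu}(Ay)\,G(c),\qquad G(c):=\frac{K_{\nu}(Ab)I_{\nu}(Ac)-I_{\nu}(Ab)K_{\nu}(Ac)}{I_{\nu}(Ac)}.
\]
Since $q_2^{(b)}(t,y)=\lim_{\varepsilon\downarrow0}\partial_\varepsilon q_1^{(b+\varepsilon)}(0,y,t,b)=\partial_c q_1^{(c)}(0,y,t,b)\big|_{c=b}$, I would interchange $L$ with $\partial_c$ and differentiate $G$ via the Wronskian $I_{\nu}(z)K_{\nu}'(z)-I_{\nu}'(z)K_{\nu}(z)=-1/z$; this yields $G'(c)=I_{\nu}(Ab)/\bigl(c\,I_{\nu}(Ac)^2\bigr)$, hence $G'(b)=1/\bigl(b\,I_{\nu}(Ab)\bigr)$, and therefore
\[
L\bigl(q_2^{(b)}(\cdot,y)\bigr)(\lambda)=2b^{\nu+1}y^{-\nu}I_{\nu}(Ay)\cdot\frac{1}{b\,I_{\nu}(Ab)}=2\Bigl(\frac{b}{y}\Bigr)^{\nu}\frac{I_{\nu}(Ay)}{I_{\nu}(Ab)}=2\,E\!\left[e^{-\lambda\tau_{y,b}}\right].
\]
As both $t\mapsto q_2^{(b)}(t,y)$ and $t\mapsto P(\tau_{y,b}\in dt)/dt$ are continuous, injectivity of the Laplace transform gives \eqref{relation_q2_fht}. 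For \eqref{relation_qbb_fht} I would let $y\downarrow0$ at the level of transforms: using $(b/y)^{\nu}I_{\nu}(Ay)\to b^{\nu}(A/2)^{\nu}/\Gamma(\nu+1)=b^{\nu}\varphi_\lambda^{\uparrow}(0)$ together with $q_2^{(b)}(t,y)\to q_2^{(b)}(t,0)$ (dominated convergence on the series of Corollary~\ref{Cor_Diff_max_dist_of_BES_bridge}) one obtains $L\bigl(q_2^{(b)}(\cdot,0)\bigr)(\lambda)=2\varphi_\lambda^{\uparrow}(0)/\varphi_\lambda^{\uparrow}(b)=2E[e^{-\lambda\tau_{0,b}}]$, and inverting gives the claim.

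The main obstacle is justifying the interchange $L\circ\partial_c=\partial_c\circ L$ together with the passage $\varepsilon\downarrow0$: this requires an integrable, $\lambda$-independent bound on $\partial_\varepsilon q_1^{(b+\varepsilon)}(0,y,\cdot,b)$ that is uniform for small $\varepsilon$. Such a bound is available from the explicit Fourier--Bessel series for $\partial_\eta P\bigl(M_{[0,t]}(r^{y\to b}_{[0,t]})\leq\eta\bigr)$ in Proposition~\ref{Prop_Diff_max_dist_of_BES_bridge}, whose $n$-th term decays like $\exp(-\pi^2 n^2 t/(8\eta^2))$ uniformly for $\eta$ in a neighbourhood of $b$; multiplying by $e^{-\lambda t}$ and invoking Fubini then legitimizes the term-by-term Laplace transform and the differentiation, after which the displayed closed form follows either by summing the resulting series or, more directly, from the closed Laplace identity of Theorem~\ref{Thm_max_dist_of_BES_bridge}.
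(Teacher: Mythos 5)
Your route is genuinely different from the paper's. The paper stays in the time domain: it writes $P(\tau_{y,b}\in dt)/dt=-\partial_t\int_0^b q_1^{(b)}(0,y,t,x)\,dx$, invokes the forward equation $m(x)\partial_t\bigl(q_1^{(b)}/m\bigr)=\tfrac12\partial_x\bigl(m(x)\partial_x(q_1^{(b)}/m)\bigr)$ for the speed density $m(x)=2x^{2\nu+1}$, and integrates by parts in $x$, so that the hitting density appears as the probability flux $-\tfrac12\bigl[m\,\partial_x(q_1^{(b)}/m)\bigr]_{x=0}^{x=b}$; the boundary term at $x=b$ is identified with $-q_2^{(b)}(t,y)$ by term-by-term differentiation of the Fourier--Bessel series, and the term at $x=0$ vanishes. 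You instead match Laplace transforms using the resolvent identities \eqref{PitmanYor_18}--\eqref{PitmanYor_23} already exploited in the proof of Theorem~\ref{Thm_max_dist_of_BES_bridge}, the Wronskian $I_\nu K_\nu'-I_\nu'K_\nu=-1/z$, and the classical formula $E[e^{-\lambda\tau_{y,b}}]=\varphi_\lambda^{\uparrow}(y)/\varphi_\lambda^{\uparrow}(b)$. Your explicit computations all check out: the normalization $P(R^y(t)\in db)/db=2b^{2\nu+1}p(t;y,b)$, the value $G'(b)=1/\bigl(bI_\nu(Ab)\bigr)$, and the $y\downarrow 0$ limit are correct, and the method avoids both the spatial integration by parts and the boundary analysis at $x=0$. (The paper's own Remark after the lemma notes that Kent's eigenfunction expansions give an alternative proof; your Laplace identity is essentially the transform of those expansions.)

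There is, however, a gap at exactly the step you flag as the main obstacle, and the justification you offer does not close it. You propose to dominate $\partial_\varepsilon q_1^{(b+\varepsilon)}(0,y,\cdot,b)$ by the termwise bounds from Proposition~\ref{Prop_Diff_max_dist_of_BES_bridge} and then apply Fubini. But those bounds carry polynomial prefactors in $n$: the contributions of the $\frac{xj_{\nu,n}}{\eta^4}J_{\nu+1}J_\nu$ terms are of order $n e^{-cn^2t}$ after using $|J_\mu(z)|\lesssim z^{-1/2}$ and $1/J_{\nu+1}^2(j_{\nu,n})\asymp n$, and $\sum_n n e^{-cn^2 t}\asymp t^{-1}$ as $t\downarrow0$, which is not integrable; the factor $e^{-\lambda t}$ does not help at $t=0$. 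So the dominating function you describe fails near $t=0$ and dominated convergence/Fubini cannot be invoked as stated. The repair is to work with the difference quotient rather than the derivative at intermediate points: since $q_1^{(b)}(0,y,t,b)=0$ (every term of the series in Theorem~\ref{Thm_max_dist_of_BES_bridge} contains $J_\nu(j_{\nu,n})=0$ when the endpoint equals the barrier), one has $q_2^{(b)}(t,y)=\lim_{\varepsilon\downarrow0}\varepsilon^{-1}q_1^{(b+\varepsilon)}(0,y,t,b)$, and inequality \eqref{Lem_2_2_eq2} of Lemma~\ref{Lem_Esti_q_first} (after the obvious rescaling from horizon $1$ to horizon $t$) gives the $\varepsilon$-uniform bound $\varepsilon^{-1}q_1^{(b+\varepsilon)}(0,y,t,b)\leq C_{\nu,b}'\,t^{-\nu-2}n_t(y-b)$, which \emph{is} integrable on $(0,\infty)$ because the Gaussian factor $n_t(y-b)$ with $y\neq b$ kills every power of $t$ at the origin and $t^{-\nu-5/2}$ is integrable at infinity for $\nu>-1$. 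With that dominating function you may pass the limit inside the Laplace transform, and since $G(b)=0$ the limit of $\varepsilon^{-1}G(b+\varepsilon)$ is $G'(b)$, after which your computation goes through verbatim. The same caution applies to the term-by-term Laplace transform implicit in your final step, but it is not needed once you use the closed-form transform of $q_1^{(c)}$ as you do.
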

\begin{proof}
First, we prove \eqref{relation_q2_fht}. It holds that 
\begin{align}
\frac{P\left(\tau_{y, b}\in dt\right)}{dt}
&=-\frac{\partial}{\partial{t}}P\left(M_{t}(R^y)< b\right)\notag\\
&=-\frac{\partial}{\partial{t}}\int_0^bP\left(M_{t}(R^y)\leq b, R^y(t)\in dx\right)\notag\\
&=-\frac{\partial}{\partial{t}}\int_0^bP\left(M_{[0, t]}(r_{[0, t]}^{y\to x})\leq b\right)P\left(R^y(t)\in dx\right).\notag
\end{align}
For each $n$, 
we set 
\[f_n(t, x):=\frac{J_{\nu}\left(xj_{\nu, n}/b\right) J_{\nu}\left(yj_{\nu, n}/b\right)}{b^2 J_{\nu+1}^2(j_{\nu, n})}\exp\left(-\frac{j_{\nu, n}^2}{2b^2}t\right), 
\quad t\geq 0, \ x\in(0, b).\]
Then, by Theorem~\ref{Thm_max_dist_of_BES_bridge}, we have
\[q^{(b)}_1(0, y, t, x)=2x\left(\frac{x}{y}\right)^{\nu}\sum_{n=1}^{\infty}f_n(t, x). \]
Let $T>0$ be fixed. By Lemma~\ref{Lem_Esti_J} and \eqref{Esti_zeros}, there exist some $\widetilde{C}_{\nu}>0$ and $N_{\nu}\in\mathbb{N}$ such that 
\begin{align*}
\left|\frac{\partial}{\partial{t}}f_n(t, x)\right|
&=\left|\frac{J_{\nu}\left(xj_{\nu, n}/b\right) J_{\nu}\left(yj_{\nu, n}/b\right)}{b^2 J_{\nu+1}^2(j_{\nu, n})}\frac{j_{\nu, n}^2}{2b^2}\exp\left(-\frac{j_{\nu, n}^2}{2b^2}t\right)\right|\\
&\leq 2\widetilde{C}_{\nu}^2\frac{\sqrt{(1+\frac{x\pi}{b})(1+\frac{y\pi}{b})}}{xy} (n\pi)^2\exp\left(-\frac{(n\pi)^2}{8b^2}t\right)
\end{align*}
holds for $n>N_{\nu}$ and $t\in (T, \infty)$. 
Since 
\[\sum_{n=N_{\nu}+1}^{\infty}(n\pi)^2\exp\left(-\frac{(n\pi)^2}{8b^2}T\right)<\infty \]
holds, we have
\begin{align*}
\frac{\partial}{\partial{t}}
q^{(b)}_1(0, y, t, x)=2x\left(\frac{x}{y}\right)^{\nu}\sum_{n=1}^{\infty}\frac{\partial}{\partial{t}}f_n(t, x)
\quad t\in (T, \infty), 
\end{align*}
by Lebesgue's dominated convergence theorem. Thus, we obtain 
\begin{align}
&\sup_{t\in(T, \infty)}\left|\frac{\partial}{\partial{t}}q^{(b)}_1(0, y, t, x)\right| \notag \\
&\quad 
\leq y^{-\nu}\sum_{n=1}^{N_{\nu}}\frac{\left|x^{\nu+1}J_{\nu}\left(xj_{\nu, n}/b\right) J_{\nu}\left(yj_{\nu, n}/b\right)\right|}{b^2 J_{\nu+1}^2(j_{\nu, n})}\frac{j_{\nu, n}^2}{b^2}\exp\left(-\frac{j_{\nu, n}^2}{2b^2}T\right)
\notag\\
&\qquad + 4\widetilde{C}_{\nu}^2 \left(\frac{x}{y}\right)^{\nu}\frac{\sqrt{(1+\frac{x\pi}{b})(1+\frac{y\pi}{b})}}{y} 
\sum_{n=N_{\nu}+1}^\infty (n\pi)^2\exp\left(-\frac{(n\pi)^2}{8b^2}T\right),
\quad x\in(0, b).
\label{Check_integrable}
\end{align}
By \eqref{esti_J}, 
because there exists $C_{\nu}>0$ such that 
\begin{align*}
\left|x^{\nu+1}J_{\nu}\left(xj_{\nu, n}/b\right)\right|
&\leq C_{\nu}\left(\frac{j_{\nu, n}}{b}\right)^{\nu}\frac{x^{2\nu+1}}{\left(1+\frac{xj_{\nu, n}}{b}\right)^{\nu+\frac{1}{2}}}
\leq C_{\nu}\left(\frac{j_{\nu, n}}{b}\right)^{-1}x^{\nu}\left(1+\frac{xj_{\nu, n}}{b}\right),
\quad x\in (0, b)
\end{align*}
holds, the functions $x^{\nu+1}J_{\nu}\left(xj_{\nu, n}/b\right), n=1, \ldots, N_{\nu}$ 
in the first term on the right-hand side of \eqref{Check_integrable} are integrable with respect to $x$ on $[0, b]$. 
In addition, since 
\[x^{\nu} \sqrt{1+\frac{x\pi}{b}}\leq x^{\nu}\left(1+\frac{x\pi}{b}\right)\quad(x\in (0, b))\]
holds, the function $x^{\nu}\sqrt{1+\frac{x\pi}{b}}$ in the second term on the right-hand side of \eqref{Check_integrable} is integrable with respect to $x$ on $[0, b]$. 
Therefore, by Lebesgue's dominated convergence theorem,  
\[\frac{\partial}{\partial{t}}\int_0^b q^{(b)}_1(0, y, t, x)dx
=\int_0^b\frac{\partial}{\partial{t}}q^{(b)}_1(0, y, t, x)dx.\]
Recall that $\nu = \delta /2 - 1$, and
let $m(x)dx=2x^{2\nu+1}dx$ be the speed measure of the BES($\delta$) process. Then, we obtain 
\begin{align*}
&m(x)\frac{\partial}{\partial{t}}\left(\frac{q^{(b)}_1(0, y, t, x)}{m(x)}\right)
=m(x)\mathcal{L}_{\delta}\left(\frac{q^{(b)}_1(0, y, t, x)}{m(x)}\right)
=\frac{1}{2}\frac{\partial}{\partial x}\left(m(x)\frac{\partial}{\partial x}\left(\frac{q^{(b)}_1(0, y, t, x)}{m(x)}\right)\right), 
\end{align*}
where $\mathcal{L}_{\delta}$ is the infinitesimal generator of the BES($\delta$) process. 
So, we get 
\begin{align*}
\frac{P\left(\tau_{y, b}\in dt\right)}{dt}
&=-\frac{1}{2}\int_0^b\frac{\partial}{\partial x}\left(m(x)\frac{\partial}{\partial x}\left(\frac{q^{(b)}_1(0, y, t, x)}{m(x)}\right)\right)dx\\
&=-\frac{1}{2}\left[m(x)\frac{\partial}{\partial x}\left(\frac{q^{(b)}_1(0, y, t, x)}{m(x)}\right)\right]_{x=0}^{x=b}.
\end{align*}
Inequality~\eqref{esti_J} and Lemma~\ref{Lem_Esti_J} imply the following inequality: 
\begin{align}
&\sum_{n=1}^{\infty}\sup_{x\in (0, \infty)}\left|\frac{j_{\nu, n}x^{-(\nu+1)}J_{\nu+1}\left(xj_{\nu, n}/b\right)J_{\nu}\left(yj_{\nu, n}/b\right)}{J_{\nu+1}^2(j_{\nu, n})}\exp\left(-\frac{j_{\nu, n}^2}{2b^2}t\right)\right|\notag\\
&\quad \leq C_{\nu+1}\sum_{n=1}^{\infty}
j_{\nu, n}\left(\frac{j_{\nu, n}}{b}\right)^{\nu+1} 
\left|\frac{J_{\nu}\left(yj_{\nu, n}/b\right)}{J_{\nu+1}^2(j_{\nu, n})}\right|
\exp\left(-\frac{j_{\nu, n}^2}{2b^2}t\right) \notag\\
&\quad \leq C_{\nu+1}\sum_{n=1}^{N_{\nu}}
j_{\nu, n}\left(\frac{j_{\nu, n}}{b}\right)^{\nu+1} 
\left|\frac{1}{J_{\nu+1}(j_{\nu, n})}\right|
\left|\frac{J_{\nu}\left(yj_{\nu, n}/b\right)}{J_{\nu+1}(j_{\nu, n})}\right|
\exp\left(-\frac{j_{\nu, n}^2}{2b^2}t\right)\notag \\
&\qquad +\frac{C_{\nu+1}\widetilde{C}_{\nu} \pi \left(1+y\pi/b\right)^{\frac{1}{2}}}{yb^{\nu}}
\sum_{n=N_{\nu}+1}^{\infty}\sqrt{n} (2n\pi)^{\nu+2}
\exp\left(-\frac{(n\pi)^2}{8b^2}t\right). \label{Ineq_Check_x}
\end{align}
Then Lebesgue's dominated convergence theorem and the inequality \eqref{Ineq_Check_x} show that 
\begin{align*}
&m(x)\frac{\partial}{\partial x}\left(\frac{q^{(b)}_1(0, y, t, x)}{m(x)}\right)\\
&\quad 
=2x^{2\nu+1}\frac{\partial}{\partial x}\left(\left(xy\right)^{-\nu}\sum_{n=1}^{\infty}\frac{J_{\nu}\left(xj_{\nu, n}/b\right) J_{\nu}\left(yj_{\nu, n}/b\right)}{b^2 J_{\nu+1}^2(j_{\nu, n})}\exp\left(-\frac{j_{\nu, n}^2}{2b^2}t\right)\right)\\
&\quad 
=-2x^{2\nu+2} y^{-\nu}
\sum_{n=1}^{\infty}\frac{j_{\nu, n}x^{-(\nu+1)}J_{\nu+1}\left(xj_{\nu, n}/b\right) J_{\nu}\left(yj_{\nu, n}/b\right)}{b^3 J_{\nu+1}^2(j_{\nu, n})}\exp\left(-\frac{j_{\nu, n}^2}{2b^2}t\right)
\end{align*}
and
\begin{align*}
&\left(m(x)\frac{\partial}{\partial x}\left(\frac{q^{(b)}_1(0, y, t, x)}{m(x)}\right)\right)\bigg|_{x=b}
=-q^{(b)}_2(t, y),\quad
\left(m(x)\frac{\partial}{\partial x}\left(\frac{q^{(b)}_1(0, y, t, x)}{m(x)}\right)\right)\bigg|_{x=0}
=0
\end{align*}
hold. Thus, we have 
\[-\frac{1}{2}\left[m(x)\frac{\partial}{\partial x}\left(\frac{q^{(b)}_1(0, y, t, x)}{m(x)}\right)\right]_{x=0}^{x=b}
=\frac{q^{(b)}_2(t, y)}{2}, \]
and \eqref{relation_q2_fht} is proved. 
By \eqref{relation_q2_fht}, Theorem~\ref{Thm_max_dist_of_BES_bridge}, and Corollary~\ref{Cor_Diff_max_dist_of_BES_bridge}, we easily obatin \eqref{relation_qbb_fht}.
\end{proof}

\begin{remark}
It is well-known that
\begin{align*}
&P\left(\tau_{a, b}\leq t\right)
=1-2\left(\frac{b}{a}\right)^{\nu}
\sum_{n=1}^{\infty}
\frac{J_{\nu}\left(aj_{\nu, n}/b\right)}{j_{\nu, n}J_{\nu+1}(j_{\nu, n})}\exp\left(-\frac{j_{\nu, n}^2}{2b^2}t\right),\\
&P\left(\tau_{0, b}\leq t\right)
=1-\frac{1}{2^{\nu-1}\Gamma(\nu+1)}
\sum_{n=1}^{\infty}\frac{j_{\nu, n}^{\nu-1}}{J_{\nu+1}(j_{\nu, n})}\exp\left(-\frac{j_{\nu, n}^2}{2b^2}t\right)
\end{align*}
hold for $0<a<b$ and $t>0$(\cite{bib_JT_Kent}). 
By differentiating these identities, we obtain 
\begin{align}
&P\left(\tau_{a, b}\in dt\right)
=\left(\frac{b}{a}\right)^{\nu}
\sum_{n=1}^{\infty}\frac{j_{\nu, n}J_{\nu}\left(aj_{\nu, n}/b\right)}{b^2 J_{\nu+1}(j_{\nu, n})}
\exp\left(-\frac{j_{\nu, n}^2}{2b^2}t\right)dt,
\label{BES_starting_a_fht_density}\\
&P\left(\tau_{0, b}\in dt\right)
=\frac{1}{2^{\nu}\Gamma(\nu+1)}
\sum_{n=1}^{\infty}\frac{j_{\nu, n}^{\nu+1}}{b^2 J_{\nu+1}(j_{\nu, n})}
\exp\left(-\frac{j_{\nu, n}^2}{2b^2}t\right)dt.
\label{BES_fht_density}
\end{align}
By using \eqref{BES_starting_a_fht_density} and \eqref{BES_fht_density}, we can also prove Lemma~\ref{Lem_relation_fht_q}. 
\end{remark}

\begin{theorem}
\label{Thm_moving_density_by_fht}
Let $0\leq a<b$. For $0<s<t<1$ and $x, y\in (0, b)$, we have 
\begin{align}
&P\left(R^a(t)\in dy~|~\tau_{a, b}=1\right)
=\frac{q^{(b)}_1(0, a, t, y)
q^{(b)}_2(1-t, y)}
{q^{(b)}_2(1, a)}dy,
\label{moving_dens_fht}\\
&P\left(R^a(t)\in dy~|~R^a(s)=x, \tau_{a, b}=1\right)
=\frac{q^{(b)}_1(s, x, t, y)
q^{(b)}_2(1-t, y)}
{q^{(b)}_2(1-s, x)}dy.
\label{moving_trans_dens_fht}
\end{align}
\end{theorem}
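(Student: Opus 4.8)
The plan is to realize the conditioning on the null event $\{\tau_{a,b}=1\}$ as a disintegration of jointly continuous densities: I will compute the joint density of $(R^a(t),\tau_{a,b})$, divide by the marginal density of $\tau_{a,b}$ furnished by Lemma~\ref{Lem_relation_fht_q}, and evaluate at the argument $1$. This is legitimate because $\tau_{a,b}$ has a continuous density that is strictly positive at $1$ (Lemma~\ref{Lem_relation_fht_q} together with Lemma~\ref{Lem_q2_is_positive}), so the regular conditional law given $\tau_{a,b}=1$ is represented by the ratio of densities.

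First I would fix $0<t<s'$ and $y\in(0,b)$ and decompose $\{\tau_{a,b}=s'\}$. Since $s'>t$, this event lies in $\{\tau_{a,b}>t\}=\{M_{[0,t]}(R^a)<b\}$, on which $\tau_{a,b}=t+\tau^{(t)}$, where $\tau^{(t)}$ is the first hitting time of $b$ by the post-$t$ process. Applying the Markov property of $R^a$ at time $t$, the pre-$t$ factor $\{M_{[0,t]}(R^a)<b,\ R^a(t)\in dy\}$ separates from the post-$t$ factor $\{\tau^{(t)}\in d(s'-t)\}$, which under $R^a(t)=y$ has the law of $\tau_{y,b}$. Combining the identity $P(R^a(t)\in dy,\ M_{[0,t]}(R^a)\le b)=q^{(b)}_1(0,a,t,y)\,dy$ from the proof of Lemma~\ref{Lem_Joint_Dist_BES_bridge} (the boundary event $\{M_{[0,t]}(R^a)=b\}$ being $P$-null, so that $\le b$ may be replaced by $<b$) with Lemma~\ref{Lem_relation_fht_q} yields
\begin{align*}
\frac{P(R^a(t)\in dy,\ \tau_{a,b}\in ds')}{dy\,ds'}=q^{(b)}_1(0,a,t,y)\,\frac{q^{(b)}_2(s'-t,y)}{2}.
\end{align*}
Dividing by the marginal density $P(\tau_{a,b}\in ds')/ds'=q^{(b)}_2(s',a)/2$ (Lemma~\ref{Lem_relation_fht_q}, in the form \eqref{relation_q2_fht} when $a>0$ and \eqref{relation_qbb_fht} when $a=0$) and setting $s'=1$ gives \eqref{moving_dens_fht}.

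For the transition density \eqref{moving_trans_dens_fht} I would carry out the same disintegration one level deeper. Inserting the intermediate time $s$ and using the Markov property at both $s$ and $t$, the joint density of $(R^a(s),R^a(t),\tau_{a,b})$ at $(x,y,s')$ factors as $q^{(b)}_1(0,a,s,x)\,q^{(b)}_1(s,x,t,y)\,q^{(b)}_2(s'-t,y)/2$, where the $[s,t]$ block is read off directly from the definition of $q^{(b)}_1(s,x,t,y)$, while the joint density of $(R^a(s),\tau_{a,b})$ at $(x,s')$ is $q^{(b)}_1(0,a,s,x)\,q^{(b)}_2(s'-s,x)/2$. Taking the ratio, the common factor $q^{(b)}_1(0,a,s,x)$ cancels, and setting $s'=1$ produces
\begin{align*}
\frac{P(R^a(t)\in dy\mid R^a(s)=x,\ \tau_{a,b}=1)}{dy}=\frac{q^{(b)}_1(s,x,t,y)\,q^{(b)}_2(1-t,y)}{q^{(b)}_2(1-s,x)},
\end{align*}
which is the assertion; the cancellation of the $a$-dependent factor reflects the Markov property and explains why the conditional law depends on $a$ only through $x$.

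The main obstacle I anticipate is the rigorous justification of the conditioning on the null event and of the Markov decomposition of the hitting-time event. Concretely, one must confirm that the ratio of jointly continuous densities genuinely gives the regular conditional law given $\tau_{a,b}=1$, and that the factorization $P(M_{[0,t]}(R^a)<b,\ R^a(t)\in dy,\ \tau^{(t)}\in d\rho)=P(M_{[0,t]}(R^a)<b,\ R^a(t)\in dy)\,P(\tau_{y,b}\in d\rho)$ follows from the Markov property once $\{M_{[0,t]}(R^a)=b\}$ is discarded as a null set. The remaining manipulations with $q^{(b)}_1$ and $q^{(b)}_2$ are then routine.
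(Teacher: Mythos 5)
Your proposal is correct and follows essentially the same route as the paper: the paper likewise factors $P(R^a(t)\in dy,\ \tau_{a,b}>u)=P(R^a(t)\in dy,\ M_t(R^a)<b)\,P(\tau_{y,b}>u-t)$ via the Markov property at $t$, differentiates in $u$ to obtain the joint density $q^{(b)}_1(0,a,t,y)\,P(\tau_{y,b}\in du-t)$, invokes Lemma~\ref{Lem_relation_fht_q} for the hitting-time densities, and realizes the conditioning on $\{\tau_{a,b}=1\}$ as the ratio of densities (phrased there as L'H\^{o}pital's rule); the transition-density case is handled by the same two-level factorization with cancellation of the $q^{(b)}_1(0,a,s,x)$ factor. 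The only cosmetic difference is that you work directly with the density of $\tau_{a,b}$ via the decomposition $\tau_{a,b}=t+\tau^{(t)}$ rather than first writing the survival function and differentiating.
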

\begin{proof}
Using the Markov property of $R^a$, for $0< t< u$, it holds that 
\begin{align*}
P\left(R^a(t)\in dy, \tau_{a, b}> u\right)
&=P\left(R^a(t)\in dy, M_{u}(R^a)< b\right)\\
&=P\left(R^a(t)\in dy, M_t(R^a)< b\right)
P\left(M_{u-t}(R^y)< b\right)\\
&=P\left(R^a(t)\in dy, M_t(R^a)< b\right)
P\left(\tau_{y, b}> u-t\right).
\end{align*}
Since the density of $\tau_{a, b}$ is a derivative of the distribution function, we obtain 
\begin{align}\label{jointdens_BES_fht}
P\left(R^a(t)\in dy, \tau_{a, b}\in du\right)
&=-\frac{d}{du}P\left(R^a(t)\in dy, \tau_{a, b}> u\right)\notag \\
&=P\left(R^a(t)\in dy, M_t(R^a)< b\right)
P\left(\tau_{y, b}\in du-t\right).
\end{align}
We can calculate the first term of the right-hand side of \eqref{jointdens_BES_fht} as 
\begin{align}
P\left(R^a(t) \in dy, M_t(R^a) < b\right)
&=P\left(R^a(t) \in dy\right)\frac{P\left(R^a(t) \in dy, M_t(R^a) < b\right)}{P\left(R^a(t) \in dy\right)}
\nonumber \\
&=P\left(R^a(t) \in dy\right)P\left(M_{[0, t]}(r_{[0, t]}^{a\to y})< b\right)
\nonumber \\
&=q^{(b)}_1(0, a, t, y)dy.
\label{relation_q1_fht}
\end{align}
Therefore, by \eqref{jointdens_BES_fht}, \eqref{relation_q1_fht}, \eqref{relation_q2_fht}, \eqref{relation_qbb_fht}, and L'H\^{o}pital's rule, 
we can prove \eqref{moving_dens_fht} as follows: 
\begin{align*}
P\left(R^a(t)\in dy~|~\tau_{a, b}=1\right)
&=P\left(R^a(t)\in dy, M_t(R^a)< b\right)\frac{
P\left(\tau_{y, b}\in du-t\right)}
{P\left(\tau_{a, b}\in du\right)}\Big|_{u=1}\\
&=\frac{q^{(b)}_1(0, a, t, y)q^{(b)}_2(1-t, y)}{q^{(b)}_2(1, a)}dy.
\end{align*}
Next, we prove \eqref{moving_trans_dens_fht}. 
Using the Markov property of $R^a$, for $0< s<t< u$, it holds that 
\begin{align*}
&P\left(R^a(t)\in dy, R^a(s)\in dx, \tau_{a, b}> u\right)\\
&\quad=P\left(R^a(t)\in dy, R^a(s)\in dx, M_{u}(R^a)< b\right)\\
&\quad=P\left(R^a(s)\in dx, M_{s}(R^a)< b\right)
P\left(R^x(t-s)\in dy, M_{t-s}(R^x)< b\right)
P\left(M_{u-(t-s)}(R^y)< b\right)\\
&\quad=P\left(R^a(s)\in dx, M_{s}(R^a)< b\right)
P\left(R^x(t-s)\in dy, M_{t-s}(R^x)< b\right)
P\left(\tau_{y, b}> u-(t-s)\right).
\end{align*}
Thus, it follows that 
\begin{align*}
&P\left(R^a(t)\in dy, R^a(s)\in dx, \tau_{a, b}\in du\right)\\
&\quad=-\frac{d}{du}P\left(R^a(t)\in dy, R^a(s)\in dx, \tau_{a, b}> u\right)\\
&\quad=P\left(R^a(s)\in dx, M_{s}(R^a)< b\right)
P\left(R^x(t-s)\in dy, M_{t-s}(R^x)< b\right)
P\left(\tau_{y, b}\in du-(t-s)\right).
\end{align*}
On the other hand, by \eqref{jointdens_BES_fht}, we obtain 
\[
P\left(R^a(s)\in dx, \tau_{a, b}\in du\right)
=P\left(R^a(s)\in dx, M_{s}(R^a)< b\right)
P\left(\tau_{x, b}\in du-s\right). 
\]
Combining this equality, \eqref{relation_q1_fht}, \eqref{relation_q2_fht}, and L'H\^{o}pital's rule, we can prove \eqref{moving_trans_dens_fht} as follows:
\begin{align*}
&P\left(R^a(t)\in dy~|~R^a(s)=x, \tau_{a, b}=1\right)\\
&\quad=\frac{P\left(R^a(s)\in dx, M_{s}(R^a)< b\right)
P\left(R^x(t-s)\in dy, M_{t-s}(R^x)< b\right)
P\left(\tau_{y, b}\in du-(t-s)\right)}
{P\left(R^a(s)\in dx, M_{s}(R^a)< b\right)
P\left(\tau_{x, b}\in du-s\right)}
\Big|_{u=1}\\
&\quad=\frac{P\left(R^x(t-s)\in dy, M_{t-s}(R^x)< b\right)
P\left(\tau_{y, b}\in du-(t-s)\right)}
{P\left(\tau_{x, b}\in du-s\right)}
\Big|_{u=1}\\
&\quad=\frac{q^{(b)}_1(s, x, t, y)
q^{(b)}_2(1-t, y)}
{q^{(b)}_2(1-s, x)}dy.
\end{align*}
\end{proof}

According to Theorem~\ref{Thm_MainResult2}, the right sides of \eqref{moving_dens_fht} and \eqref{moving_trans_dens_fht} are the transition densities of $H^{a\to b}$. 
Therefore, the proof of Proposition~\ref{Prop_MainResult1} is completed.

\section{Proof of Proposition~\ref{Prop_Holder_conti_Moving}}
\label{Section_Holder}
The proof is similar to that in Chapter~2, 
Theorem 2.8 in \cite{bib_KarazasShreve}.
We fix $\gamma \in \left( 0, \frac{1}{2} \right)$. 
Then, we can find $m_0 \in \mathbb{N}$ 
so that 
$\gamma < \frac{m_0 - 3\nu-6}{2m_0}$ 
holds. 
For this $m_0$, combining 
Theorem~\ref{Thm_MainResult2}, 
Skorohod's theorem, Fatou's lemma, and Lemma~\ref{Lem_MomentEq}, 
we can take a positive number $C_{m_0, \nu, a, b}$ that satisfies
\begin{align*}
&E\left[ \left| H^{a \to b}(r) -a\right|^{2m_0} \right] 
\leq \frac{C_{m_0, \nu, a, b}}{r^{\nu+1-m_0}(1-r)^{\nu+\frac{5}{2}}},\\
&E\left[ \left| H^{a \to b}(1-r) - b \right|^{2m_0} \right] 
\leq \frac{C_{m_0, \nu, a, b}}{r^{\nu+2-m_0}(1-r)^{\nu+\frac{3}{2}}},\\
& E\left[ \left| H^{a \to b}(t) - H^{a \to b}(s) \right|^{2m_0} \right] 
\leq \frac{C_{m_0, \nu, a, b}}{(t-s)^{\nu+1-m_0}s^{\nu+\frac{3}{2}}(1-t)^{\nu+\frac{5}{2}}}
\end{align*}
for $t, s, r \in (0,1)$ with $s<t$. Now, for $n \in \mathbb{N}$, we define
\begin{align*}
&F_n = \left\{ 
\max_{1 \leq k \leq 2^n} 
\left| H^{a \to b} \left(\frac{k-1}{2^n}\right) 
- H^{a \to b} \left(\frac{k}{2^n}\right) \right| 
\geq 2^{-n \gamma} 
\right\}, \\
&a(n, k) = P\left( \left| H^{a \to b} \left(\frac{k-1}{2^n}\right) 
- H^{a \to b} \left(\frac{k}{2^n} \right) \right| \geq 2^{-n \gamma} \right),
\qquad 1 \leq k \leq 2^n.
\end{align*}
Then, Chebyshev's inequality yields
\begin{align*}
a(n, 1) 
&\leq 2^{2nm_0\gamma} 
E\left[ \left| H^{a \to b}(1/2^n) -a\right|^{2m_0} \right] 
\leq C_{m_0, \nu, a, b}2^{-n(m_0-2m_0\gamma-2\nu-\frac{7}{2})},\\
a(n, 2^n) 
&\leq 2^{2nm_0\gamma} 
E\left[ \left| H^{a \to b}(1-1/2^n) - b \right|^{2m_0} \right] 
\leq C_{m_0, \nu, a, b}2^{-n(m_0-2m_0\gamma-2\nu-\frac{7}{2})},
\end{align*}
and, for $2 \leq k \leq 2^n-1$,
\begin{align*}
a(n, k) 
&\leq 2^{2nm_0\gamma} 
E\left[ \left| H^{a \to b}((k-1)/2^n) - H^{a \to b}(k/2^n) \right|^{2m_0} \right] 
\leq C_{m_0, \nu, a, b} 2^{-n(m_0-2m_0\gamma-3\nu-5)}.
\end{align*}
 
Therefore, $P(F_n) \leq C_{m_0, \nu, a, b} \times 2^{-n(m_0 -2m_0\gamma-3\nu-6)}$, 
and because $m_0 -2m_0\gamma -3\nu-6> 0$, 
we have $P\left( \liminf_{n \to \infty} F_n^c \right) = 1$ 
by the first Borel--Cantelli lemma. 
If $\omega \in \liminf_{n \to \infty} F_n^c$, 
then there exists $n^*(\omega) \in \mathbb{N}$ 
such that $\omega \in \bigcap_{n \geq n^*(\omega)} F_n^c$. 
For $n \geq n^*(\omega)$, we can deduce that 
\begin{align*}
\left| H^{a \to b}(t)-H^{a \to b}(s) \right| 
\leq 2 \sum_{j=n+1}^{\infty} 2^{-\gamma j} 
= \frac{2}{1-2^{-\gamma}} 2^{-(n+1)\gamma},\qquad 0 < t-s < 2^{-n}.
\end{align*}
Now, let $t, s \in [0, 1]$ satisfy $0 < t-s < 2^{-n^*(\omega)}$ 
and choose $n \geq n^*(\omega)$ so that $2^{-(n+1)} \leq t-s < 2^{-n}$. 
Then, the above inequality yields
\begin{align*}
\left| H^{a \to b}(t)-H^{a \to b}(s) \right| \leq 
\frac{2}{1-2^{-\gamma}} \left| t-s \right|^{\gamma}.
\end{align*}
 Hence, $H^{a \to b}$ is locally H\"{o}lder-continuous 
with exponent $\gamma$ for $\omega \in \liminf_{n \to \infty} F_n^c$.
\qed

\section{The space-time reversal property of the BES($3$) house-moving and numerical examples}\label{section_numerical}
In this section, 
we show that the BES($3$) house-moving has the space-time reversal property. 

Although the following proposition is showed by \cite[Proposition~4.8]{bib_RevuzYor}, 
we prove it based on our setting for completeness.

\begin{prop}
Let $\delta=3\;(\nu=\frac{1}{2})$ and $b>0$. For $0<t<1$ and $y\in (0, b)$, we have 
\begin{align*}
P\left(H^{0\to b}(t)\in dy\right)
=P\left(H^{0\to b}(1-t)\in b-dy\right).
\end{align*}
\end{prop}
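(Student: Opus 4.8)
The plan is to reduce the statement to a single identity between the one-dimensional densities furnished by Theorem~\ref{Thm_MainResult2}, and then to verify that identity by inserting the elementary ($\nu=\tfrac12$) forms of the Bessel functions.

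First I would use Theorem~\ref{Thm_MainResult2} with $a=0$ to write the density of $H^{0\to b}(t)$ with respect to $dy$ as $q^{(b)}_1(0,0,t,y)\,q^{(b)}_2(1-t,y)/q^{(b)}_2(1,0)$. Since the density of $H^{0\to b}(1-t)$ at the point $b-y$ equals $q^{(b)}_1(0,0,1-t,b-y)\,q^{(b)}_2(t,b-y)/q^{(b)}_2(1,0)$, and $P(H^{0\to b}(1-t)\in b-dy)$ has exactly this density with respect to $dy$, the common factor $q^{(b)}_2(1,0)$ cancels and the proposition becomes equivalent to the identity
\begin{equation}
q^{(b)}_1(0,0,t,y)\,q^{(b)}_2(1-t,y)=q^{(b)}_1(0,0,1-t,b-y)\,q^{(b)}_2(t,b-y),\qquad 0<t<1,\ 0<y<b.\label{eq_plan_core}
\end{equation}

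Next I would make both factors explicit for $\nu=\tfrac12$. Here $j_{1/2,n}=n\pi$, $J_{1/2}(z)=\sqrt{2/(\pi z)}\sin z$, $J_{3/2}^2(n\pi)=2/(\pi^2 n)$, and $J_{3/2}(n\pi)=(-1)^{n+1}\pi^{-1}\sqrt{2/n}$. Substituting the density of $R^0(t)$ and the second formula of Theorem~\ref{Thm_max_dist_of_BES_bridge} into the definition of $q^{(b)}_1$, and using the series for $q^{(b)}_2$ recorded in the proof of Lemma~\ref{Lem_q2_is_positive} (itself a specialization of Corollary~\ref{Cor_Diff_max_dist_of_BES_bridge}), a short computation collapses all Bessel factors to sines and gives
\[
q^{(b)}_1(0,0,t,y)=\frac{2\pi y}{b^2}\sum_{n=1}^{\infty} n\sin\!\Big(\frac{n\pi y}{b}\Big)e^{-n^2\pi^2 t/(2b^2)},\qquad
q^{(b)}_2(t,y)=\frac{2\pi}{by}\sum_{m=1}^{\infty}(-1)^{m+1} m\sin\!\Big(\frac{m\pi y}{b}\Big)e^{-m^2\pi^2 t/(2b^2)}.
\]
Since $|\sin|\le 1$ and the exponential factors decay like $e^{-n^2\pi^2 t/(2b^2)}$ with both $t>0$ and $1-t>0$, each series converges absolutely; hence the relevant Cauchy products are absolutely convergent double series and may be reindexed freely.

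Finally I would insert these formulas into \eqref{eq_plan_core}. The prefactor $y$ of $q^{(b)}_1$ cancels the $1/y$ of $q^{(b)}_2$ on the left, and likewise $b-y$ cancels on the right, so both sides equal $4\pi^2/b^3$ times a double sine series. On the right I would apply the reflection identity $\sin(n\pi(b-y)/b)=(-1)^{n+1}\sin(n\pi y/b)$ to each sine factor; the signs combine through $(-1)^{m+1}(-1)^{n+m}=(-1)^{n+1}$, after which relabeling the summation indices $n\leftrightarrow m$ carries the right-hand double series onto the left-hand one term by term, establishing \eqref{eq_plan_core} and hence the proposition. The only delicate point is the justification of this reindexing, which is supplied by the absolute convergence noted above; once the elementary forms of $J_{1/2}$ and $J_{3/2}$ are used there is no genuine analytic obstacle. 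As a conceptual remark I would add that, by Proposition~\ref{Prop_MainResult1}, the identity \eqref{eq_plan_core} is exactly the time-reversal-and-reflection symmetry $s\mapsto\tau_{0,b}-s$, $x\mapsto b-x$ of BES$(3)$ run until its first passage to $b$, i.e.\ Williams' time reversal, which explains \emph{why} the computation must succeed.
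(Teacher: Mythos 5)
Your proposal is correct: the reduction to the product identity $q^{(b)}_1(0,0,t,y)\,q^{(b)}_2(1-t,y)=q^{(b)}_1(0,0,1-t,b-y)\,q^{(b)}_2(t,b-y)$ is exactly what is needed, your explicit sine-series forms of $q^{(b)}_1(0,0,t,y)$ and $q^{(b)}_2(t,y)$ check out against Theorem~\ref{Thm_max_dist_of_BES_bridge} and the series in the proof of Lemma~\ref{Lem_q2_is_positive}, the sign bookkeeping $(-1)^{m+1}(-1)^{n+m}=(-1)^{n+1}$ is right, and the Gaussian decay in both $t$ and $1-t$ legitimizes the reindexing of the double series. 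The route is, however, genuinely different from the paper's. The paper first converts the Fourier--Bessel series into method-of-images (theta-function) sums via the heat-kernel identity \eqref{Eq_heatkernel}, computes $q^{(\eta)}_2$ directly from its definition as an $\varepsilon$-derivative of the images form, and arrives at the single-factor reflection identity $q^{(b)}_2(1-t,y)=\frac{b}{y(b-y)}\,q^{(b)}_1(0,0,1-t,b-y)$; substituting this into the density makes the $(t,y)\leftrightarrow(1-t,b-y)$ symmetry manifest with no series manipulation at all. Your argument stays entirely in the eigenfunction (sine-series) representation and verifies only the weaker product identity by a Cauchy-product reindexing. What the paper's approach buys is a cleaner and stronger intermediate statement (and a route that recomputes $q_2$ from first principles rather than quoting Corollary~\ref{Cor_Diff_max_dist_of_BES_bridge}); what yours buys is brevity and the avoidance of the images representation entirely. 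Note that your own series for $q^{(b)}_2$ already yields the paper's single-factor identity in one line via $\sin(n\pi(b-y)/b)=(-1)^{n+1}\sin(n\pi y/b)$, which would let you skip the double-series step. Your closing remark identifying the symmetry with Williams' time reversal of BES$(3)$ up to $\tau_{0,b}$ via Proposition~\ref{Prop_MainResult1} is apt and is not made explicit in the paper.
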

\begin{proof}
The Fourier expansion of the heat kernel shows that 
the following equality
\begin{align}
&\sum_{k=-\infty}^\infty (n_{t}(y-x+2k)-n_t(y+x+2k))
=2\sum_{n=1}^\infty\sin{xn\pi}\sin{yn\pi}\exp\left(-\frac{(n\pi)^2}{2}t\right)
\label{Eq_heatkernel}
\end{align}
holds for $x, y\in \mathbb{R}$ and $t>0$.
So, we obtain 
\begin{align}
2\sum_{k=-\infty}^\infty \frac{y+2k}{t}n_t(y+2k)
&=\lim_{x\downarrow0}
\sum_{k=-\infty}^\infty \frac{1}{x}(n_{t}(y-x+2k)-n_t(y+x+2k))
\notag \\
&=\lim_{x\downarrow0}2\sum_{n=1}^\infty\frac{\sin{xn\pi}}{x}\sin{yn\pi}\exp\left(-\frac{(n\pi)^2}{2}t\right)
\notag \\
&=2\sum_{n=1}^\infty n\pi\sin{yn\pi}\exp\left(-\frac{(n\pi)^2}{2}t\right), 
\quad y\in \mathbb{R}, \ t>0.
\label{Eq_heatkernel2}
\end{align}
Also, in \cite{bib_Watson}, we have 
\begin{align}
&J_{\frac{1}{2}}(x)=\sqrt{\frac{2}{\pi x}}\sin{x},\quad J_{\frac{3}{2}}(x)=\sqrt{\frac{2}{\pi x}}\left(\frac{\sin{x}}{x}-\cos{x}\right),
\quad j_{\frac{1}{2}, n}=n\pi \quad (n\in\mathbb{N}).
\label{J_1/2_3/2}
\end{align}
Using \eqref{Eq_heatkernel}, \eqref{Eq_heatkernel2}, \eqref{J_1/2_3/2},
and Theorem~\ref{Thm_max_dist_of_BES_bridge}, for $0< s < t < 1$, $0<x, y< \eta$, 
we can obtain the expressions for $q^{(\eta)}_1(s, x, t, y)$ and $q^{(\eta)}_1(0, 0, t, y)$ in the case of $\nu=\frac{1}{2}$: 
\begin{align}
q^{(\eta)}_1(s, x, t, y)
&=\frac{y}{x}\sum_{k=-\infty}^\infty(n_{t-s}(y-x+2k\eta)-n_{t-s}(y+x+2k\eta)), 
\label{q1_12}\\
q^{(\eta)}_1(0, 0, t, y)
&=y\sum_{k=-\infty}^\infty2\frac{y+2k\eta}{t}n_t(y+2k\eta).
\label{q1_00}
\end{align}
On the other hand, 
according to \eqref{q1_12}, we obtain 
\begin{align}
q^{(\eta+\varepsilon)}_1(t, y, 1, \eta)
&=\frac{\eta}{y}\sum_{k=-\infty}^\infty(n_{1-t}(\eta-y+2k(\eta+\varepsilon))-n_{1-t}(y+(2k+1)\eta+2k\varepsilon))\notag \\
&=\frac{\eta}{y}\sum_{k=-\infty}^\infty(n_{1-t}(\eta-y+2k(\eta+\varepsilon))-n_{1-t}(\eta-y-2(k+1)\eta-2k\varepsilon))\notag \\
&=\frac{\eta}{y}\sum_{k=-\infty}^\infty(n_{1-t}(\eta-y+2k(\eta+\varepsilon))-n_{1-t}(\eta-y+2k\eta+2(k+1)\varepsilon))
\label{q1_999}
\end{align}
for $\varepsilon>0$. 
Using \eqref{q1_00} and \eqref{q1_999}, we get 
\begin{align*}
&q^{(\eta)}_2(1-t, y)\\
&\quad =
\lim_{\varepsilon\downarrow0}\frac{\partial}{\partial \varepsilon}q^{(\eta+\varepsilon)}_1(t, y, 1, \eta)\\
&\quad =
\frac{\eta}{y}\sum_{k=-\infty}^\infty\left(-2k\frac{\eta-y+2k\eta}{1-t}n_{1-t}(\eta-y+2k\eta)+2(k+1)\frac{\eta-y+2k\eta}{1-t}n_{1-t}(\eta-y+2k\eta)\right)\\
&\quad =
\frac{\eta}{y}\sum_{k=-\infty}^\infty2\frac{\eta-y+2k\eta}{1-t}n_{1-t}(\eta-y+2k\eta)\\
&\quad =
\frac{\eta}{y(\eta-y)}q^{(\eta)}_1(0, 0, 1-t, \eta-y).
\end{align*}
Thus, it holds that 
\begin{align*}
P\left(H^{0\to b}(t)\in dy\right)
&=\frac{q^{(b)}_1(0, 0, t, y)q^{(b)}_2(1-t, y)}{q^{(b)}_2(1, 0)}dy\\
&=\frac{b}{y(b-y)}\cdot \frac{q^{(b)}_1(0, 0, t, y)q^{(b)}_1(0, 0, 1-t, b-y)}{q^{(b)}_2(1, 0)}dy\\
&=P\left(H^{0\to b}(1-t)\in b-dy\right),
\end{align*}
and the proof is completed.
\end{proof}

\section{Future work}\label{section_Future_work}

We are interested in finding the stochastic differential equations for the BES($\delta$) house-moving.
We are currently investigating this problem by using Theorem~\ref{Thm_abs_conti}.

In addition, 
let $R=\{R(t)\}_{t\geq 0}$ be a regular one-dimensional diffusion on $[0, \infty)$. 
For an $R$-bridge 
$r^{0\to b}=\{r^{0\to b}(s)\}_{s\in [0, 1]}$ $(b>0)$ from $0$ to $b$ on $[0,1]$, 
we are also interested in the weak convergence of 
$r^{0 \to b}|_{K^-(b+\eta)}$ as $\eta \downarrow 0$.

\appendix
\section{Appendix}\label{section_appendix}

\subsection{Bessel functions}\label{subsection_BES_function}
Let $J_{\alpha}(z)$ and $I_{\alpha}(z)$ denote the Bessel function 
and modified Bessel function of the first kind with index $\alpha\in\mathbb{R}$, respectively. 
They are defined as 
\begin{align*}
J_{\alpha}(z)&=\left(\frac{1}{2}z\right)^{\alpha}
\sum_{k\in \mathbb{Z}_+}
\frac{\left(-\frac{1}{4}z^2\right)^k}{k!\Gamma(\alpha+k+1)},\\
I_{\alpha}(z)&=\left(\frac{1}{2}z\right)^{\alpha}
\sum_{k\in \mathbb{Z}_+}
\frac{\left(\frac{1}{4}z^2\right)^k}{k! \Gamma(\alpha+k+1)}
\end{align*}
for $z\in\mathbb{C}\setminus \mathbb{R}_-$. 
In addition, for $z\in \mathbb{C}\setminus\mathbb{R}_-$, we define 
\[
K_{\alpha}(z):=\frac{\pi\left(I_{-\alpha}(z)-I_{\alpha}(z)\right)}{2\sin(\alpha\pi)},
\]
when $\alpha\in\mathbb{R}\setminus \mathbb{Z}$, and
\[
K_{\alpha}(z):=\lim_{\beta\to \alpha}K_{\beta}(z)
\]
when $\alpha\in\mathbb{Z}$. 
$K_{\alpha}(z)$ is called the modified Bessel function with index $\alpha$ of the second kind. 
Moreover, the values of $z^{-\alpha}J_{\alpha}(z)$ and $z^{-\alpha}I_{\alpha}(z)$ at zero are written as 
\begin{align*}
&z^{-\alpha}J_{\alpha}(z)\vert_{z=0}
=\frac{1}{2^{\alpha}\Gamma(\alpha+1)}, \quad
z^{-\alpha}I_{\alpha}(z)\vert_{z=0}
=\frac{1}{2^{\alpha}\Gamma(\alpha+1)}.
\end{align*}

We obtain the following derivatives:
\begin{align}
\frac{d}{dz}\left(z^{\alpha}J_{\alpha}(z)\right)=z^{\alpha}J_{\alpha-1}(z), \quad
&\frac{d}{dz}\left(z^{-\alpha}J_{\alpha}(z)\right)=-z^{-\alpha}J_{\alpha+1}(z),
\quad z\in \mathbb{C}\setminus\mathbb{R}_-, 
\label{diffeq_z_alpha_J_alpha_z}\\
\frac{d}{dz}\left(z^{\alpha}I_{\alpha}(z)\right)=z^{\alpha}I_{\alpha-1}(z), \quad 
&\frac{d}{dz}\left(z^{-\alpha}I_{\alpha}(z)\right)=z^{-\alpha}I_{\alpha+1}(z), 
\quad z\in \mathbb{C}\setminus\mathbb{R}_-.
\label{diffeq_z_alpha_I_alpha_z}
\end{align}
Moreover, using \eqref{diffeq_z_alpha_J_alpha_z} and \eqref{diffeq_z_alpha_I_alpha_z}, we have 
\begin{align*}
&\frac{d}{dz}J_{\alpha}(z)=J_{\alpha-1}(z)-\frac{\alpha}{z}J_{\alpha}(z)
=-J_{\alpha+1}(z)+\frac{\alpha}{z}J_{\alpha}(z),\\
&\frac{d}{dz}I_{\alpha}(z)=I_{\alpha-1}(z)-\frac{\alpha}{z}I_{\alpha}(z)
=I_{\alpha+1}(z)+\frac{\alpha}{z}I_{\alpha}(z).
\end{align*}

In the rest of this section, we assume that $\alpha>-1$. 
According to \cite[$(2.2)$, and $(2.8)$]{bib_Serafin}, 
there exists $C_{\alpha}>0$ such that 
\begin{align}
&z^{-\alpha}|J_{\alpha}(z)|\leq 
C_{\alpha}\frac{1}{(1+z)^{\alpha+\frac{1}{2}}},\quad z\geq 0,
\label{esti_J}\\
&z^{-\alpha} I_{\alpha}(z)\leq 
C_{\alpha}\frac{1}{(1+z)^{\alpha+\frac{1}{2}}}e^z,\quad z\geq 0.
\label{esti_I}
\end{align}
The sequence of positive zeros of the Bessel function $J_{\alpha}$ is denoted by $\{j_{\alpha, n}\}_{n=1}^{\infty}$ ($j_{\alpha, 1}<j_{\alpha, 2}<\cdots$). 
According to \cite{bib_Watson}, we have
\begin{align*}
J_{\alpha+1}(j_{\alpha, n})\neq 0 
\quad \mbox{and}\quad j_{\alpha, n}<j_{\alpha+1, n}<j_{\alpha, n+1}
\qquad (n=1, 2, \ldots).
\end{align*}
In addition, from \cite{bib_Watson} we find the following asymptotics as $n\to\infty$: 
\begin{align}
j_{\alpha, n}\sim n\pi,\quad
J_{\alpha+1}(j_{\alpha, n})\sim (-1)^{n-1}\sqrt{\frac{2}{\pi j_{\alpha, n}}}\sim (-1)^{n-1}\frac{1}{\pi}\sqrt{\frac{2}{n}}.
\label{Esti_zeros}
\end{align}

\subsection{General results on continuous processes}

In this subsection, we introduce some general results used in this paper. 
The proofs of them are found in \cite{bib_3DimMoving}.

\begin{theorem}[{\cite[Chapter 2, Theorem 4.15]{bib_KarazasShreve}}]
\label{Ap_Thm_KS_4.15}
Let $\{X_n \}_{n=1}^{\infty}$ be the family of 
$C([0,1], \mathbb{R}^d)$-valued random variables. 
If the family $\{X_n \}_{n=1}^{\infty}$ is tight 
and the finite-dimensional distribution of $X_n$ 
converges to that of some $X$, then $X_n \overset{\mathcal{D}}{\to} X$ holds.
\end{theorem}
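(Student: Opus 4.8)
The plan is to combine Prokhorov's theorem with the fact that finite-dimensional distributions determine a Borel probability measure on $C([0,1],\mathbb{R}^d)$, and then to invoke the standard subsequence principle. First I would recall that, since $C([0,1],\mathbb{R}^d)$ is a complete separable metric space under $d_\infty$, the space of Borel probability measures on it, equipped with the topology of weak convergence, is metrizable (for instance by the L\'evy--Prokhorov metric). In a metric space, a sequence converges to a limit $X$ if and only if every subsequence admits a further subsequence converging to $X$; hence it suffices to verify this subsequence property.

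So, fix an arbitrary subsequence $\{X_{n_k}\}$. By tightness and Prokhorov's theorem, $\{X_{n_k}\}$ is relatively compact for weak convergence, so there exist a further subsequence $\{X_{n_{k_\ell}}\}$ and a $C([0,1],\mathbb{R}^d)$-valued random variable $Y$ with $X_{n_{k_\ell}} \overset{\mathcal{D}}{\to} Y$. The next step is to identify the law of $Y$. For any $0 \le t_1 < \cdots < t_m \le 1$, the evaluation map $\pi_{t_1,\dots,t_m}\colon C([0,1],\mathbb{R}^d) \to (\mathbb{R}^d)^m$ sending $w \mapsto (w(t_1),\dots,w(t_m))$ is continuous, so by the continuous mapping theorem the finite-dimensional distributions of $X_{n_{k_\ell}}$ converge weakly to those of $Y$. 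On the other hand, by hypothesis they converge to the finite-dimensional distributions of $X$. Since weak limits of probability measures on $(\mathbb{R}^d)^m$ are unique, $Y$ and $X$ share all finite-dimensional distributions.

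It remains to upgrade equality of finite-dimensional distributions to equality of laws on $C([0,1],\mathbb{R}^d)$. Here I would argue that the collection of finite-dimensional cylinder sets forms a $\pi$-system generating the Borel $\sigma$-algebra $\mathcal{B}(C([0,1],\mathbb{R}^d))$; separability is what guarantees that the topology, and hence the Borel structure, is recovered from the coordinate maps. Applying Dynkin's $\pi$-$\lambda$ theorem (as already used elsewhere in the paper) then shows that the laws of $X$ and $Y$ agree on all of $\mathcal{B}(C([0,1],\mathbb{R}^d))$, so $X \overset{d}{=} Y$ and therefore $X_{n_{k_\ell}} \overset{\mathcal{D}}{\to} X$. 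Since the original subsequence $\{X_{n_k}\}$ was arbitrary, the subsequence principle yields $X_n \overset{\mathcal{D}}{\to} X$.

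I expect the main obstacle to be the non-elementary analytic input rather than the bookkeeping. The implication ``tightness $\Rightarrow$ relative compactness'' is precisely Prokhorov's theorem, whose proof (extracting a weak limit by a diagonal argument over a countable exhausting family of compact sets) is the genuinely hard step. By contrast, the continuity of the evaluation maps, the uniqueness of finite-dimensional weak limits, and the $\pi$-$\lambda$ identification of the full law are all routine once the framework is in place.
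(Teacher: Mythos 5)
Your argument is correct and is essentially the standard proof of this result as given in the cited reference (Karatzas--Shreve, Chapter 2, Theorem 4.15); the paper itself states this theorem without proof, deferring to that citation. The chain Prokhorov $\Rightarrow$ subsequential weak limits, continuous mapping along the evaluation maps to identify the finite-dimensional distributions of any such limit, uniqueness of the law via the cylinder-set $\pi$-system, and the subsequence principle is exactly the intended route, and each step is justified as you state it.
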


\begin{lem}[{\cite[Appendix]{bib_3DimMoving}}]\label{Ap_Lem_Markov_TransDensityConv_FinDimDistConv}
Let $a, b\in \mathbb{R}^d$, 
and let $X_n$ and $X$ are $\mathbb{R}^d$-valued Markovian bridges from $a$ to $b$ on $[0,1]$
for $n\in \mathbb{N}$. 
Let $X_n$ and $X$ have the respective transition densities
\begin{align*}
P\left( X_n(t) \in dy \right) = q_n(t,y)dy,
&\qquad P\left( X_n(t) \in dy~|~X_n(s)=x \right) = q_n(s,x,t,y)dy,\\
P\left( X(t) \in dy \right) = q(t,y)dy, 
&\qquad P\left( X(t) \in dy~|~X(s)=x \right) = q(s,x,t,y)dy
\end{align*}
for $0 < s < t < 1, x,y \in \mathbb{R}^d$, and $n\in \mathbb{N}$. 
If 
\begin{align*}
\lim_{n \to \infty} q_n(t,y) = q(t,y)&,\qquad a.e.  \ y \in \mathbb{R}^d,\\
\lim_{n \to \infty} q_n(s,x,t,y) = q(s,x,t,y)&,
\qquad a.e. \ (x,y)\in \mathbb{R}^d \times \mathbb{R}^d,
\end{align*}
for $0< s<t< 1$, 
then the finite-dimensional distribution of $X_n$ 
converges to that of $X$ as $n \to \infty$.
\end{lem}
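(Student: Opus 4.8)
The plan is to exploit the Markov property to reduce the claim to the convergence of a product of densities, and then to upgrade pointwise convergence to $L^1$-convergence by Scheffé's lemma. First I would fix a finite collection of interior times $0 < t_1 < t_2 < \cdots < t_k < 1$ (the times $t=0$ and $t=1$ are harmless, since both $X_n$ and $X$ take the deterministic values $a$ and $b$ there, so the corresponding marginals converge trivially). Because $X_n$ is a Markovian bridge, its finite-dimensional density factorises as
\begin{align*}
p_n(y_1, \ldots, y_k)
= q_n(t_1, y_1) \prod_{i=2}^{k} q_n(t_{i-1}, y_{i-1}, t_i, y_i),
\end{align*}
and likewise $X$ has joint density $p(y_1, \ldots, y_k) = q(t_1, y_1) \prod_{i=2}^{k} q(t_{i-1}, y_{i-1}, t_i, y_i)$. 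Establishing finite-dimensional convergence then amounts to showing that the law on $(\mathbb{R}^d)^k$ with density $p_n$ converges weakly to the one with density $p$.

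Next I would verify that $p_n \to p$ Lebesgue-almost everywhere on $(\mathbb{R}^d)^k$. The hypotheses provide $q_n(t_1, \cdot) \to q(t_1, \cdot)$ off a null set $N_1 \subset \mathbb{R}^d$, and $q_n(t_{i-1}, \cdot, t_i, \cdot) \to q(t_{i-1}, \cdot, t_i, \cdot)$ off a null set $N_i \subset \mathbb{R}^d \times \mathbb{R}^d$ for each $i$. Pulling each exceptional set back to the product space along the relevant coordinate projections turns it into a cylinder over a null set, which is again Lebesgue-null in $(\mathbb{R}^d)^k$; taking the finite union of these, one sees that every factor—and hence the product $p_n$—converges to the corresponding factor of $p$ outside a single null subset of $(\mathbb{R}^d)^k$.

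Finally I would invoke Scheffé's lemma. Each $p_n$ and $p$ is a genuine probability density on $(\mathbb{R}^d)^k$, so $\int p_n = \int p = 1$; combined with the almost-everywhere convergence just obtained, Scheffé's lemma gives $\int |p_n - p| \to 0$, that is, convergence in total variation of the laws of $(X_n(t_1), \ldots, X_n(t_k))$. Total variation convergence implies weak convergence of these $k$-dimensional marginals, which is precisely the asserted convergence of finite-dimensional distributions. The only step that is more than a one-line appeal to a standard result is the null-set bookkeeping in the second paragraph: the exceptional sets for the distinct factors live on different coordinate subspaces, and one must confirm that they combine into a single Lebesgue-null set on the full product space before the pointwise limit can be asserted. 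I would stress that no tightness or domination hypothesis is needed here, precisely because Scheffé's lemma converts almost-everywhere convergence of densities together with equality of total masses directly into $L^1$-convergence.
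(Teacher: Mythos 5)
Your argument is correct: the Markov factorization of the $k$-dimensional density, the Fubini-type observation that each exceptional set pulls back to a Lebesgue-null cylinder in $(\mathbb{R}^d)^k$, and Scheff\'e's lemma upgrading a.e.\ convergence of probability densities to total-variation (hence weak) convergence of the marginals together give the claim, with the endpoint times handled trivially by the bridge conditions. The present paper does not reproduce a proof of this lemma but defers it to the appendix of \cite{bib_3DimMoving}, and your Scheff\'e-based argument is exactly the standard route taken there, so there is nothing to flag.
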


\begin{theorem}[{\cite[Appendix]{bib_3DimMoving}}]\label{Ap_Thm_suf_cond_restricted_tightness}
For $\varepsilon \in \mathcal{E}$, $X^{(\varepsilon )}$ is a 
$(C([0, 1], \mathbb{R}^d), \mathcal{B}(C([0, 1], \mathbb{R}^d)))$-valued 
random variable defined on 
$(\Omega^{(\varepsilon)}, \mathcal{F}^{(\varepsilon)}, P^{(\varepsilon)})$. 
Assume that $\{ X^{(\varepsilon )}(0) \}_{\varepsilon \in \mathcal{E}}$ 
is uniformly integrable and that the following conditions hold: 
\begin{enumerate}[{\rm (1)}]
\item 
For each $u \in \left(0,\frac{1}{2} \right)$, 
$\{ \pi_{[u,1-u]} \circ X^{(\varepsilon )} \}_{\varepsilon \in \mathcal{E}}$ 
is tight.
\item 
For each $\xi >0$, it holds that
\begin{align*}
&\lim_{u \downarrow 0} \sup_{\varepsilon \in \mathcal{E}} 
P^{(\varepsilon)} \left( \sup_{0 \leq t \leq u} 
|X^{(\varepsilon )}(t) - X^{(\varepsilon )}(0)| > \xi \right) = 0, \\
&\lim_{u \downarrow 0} \sup_{\varepsilon \in \mathcal{E}} 
P^{(\varepsilon)} \left( \sup_{1-u \leq t \leq 1} 
|X^{(\varepsilon )}(t) - X^{(\varepsilon )}(1)| > \xi \right) = 0.
\end{align*}
\end{enumerate}
Then, the family $\{ X^{(\varepsilon )}\}_{\varepsilon \in \mathcal{E}}$ is tight.
\end{theorem}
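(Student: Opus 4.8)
The plan is to deduce the tightness of $\{X^{(\varepsilon)}\}_{\varepsilon\in\mathcal{E}}$ in $C([0,1],\mathbb{R}^d)$ from the classical criterion phrased through the modulus of continuity (see, e.g., \cite[Chapter~2]{bib_KarazasShreve}): the family is tight as soon as $\{X^{(\varepsilon)}(0)\}_{\varepsilon\in\mathcal{E}}$ is tight in $\mathbb{R}^d$ and, for every $\xi>0$,
\[
\lim_{\delta\downarrow 0}\ \sup_{\varepsilon\in\mathcal{E}}\ P^{(\varepsilon)}\!\left(m_\delta(X^{(\varepsilon)})\ge \xi\right)=0,
\qquad
m_\delta(X):=\sup_{\substack{s,t\in[0,1]\\|t-s|\le\delta}}|X(t)-X(s)|.
\]
The tightness of the initial values is immediate from the hypothesis: uniform integrability of $\{X^{(\varepsilon)}(0)\}_{\varepsilon\in\mathcal{E}}$ gives $\sup_{\varepsilon}\int |X^{(\varepsilon)}(0)|\,dP^{(\varepsilon)}<\infty$, so Markov's inequality yields $\sup_{\varepsilon}P^{(\varepsilon)}(|X^{(\varepsilon)}(0)|>\lambda)\to0$ as $\lambda\to\infty$. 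Hence the whole task reduces to the modulus estimate displayed above.

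To prove it, fix $\xi>0$ and $\rho>0$. First I would use hypothesis (2) to control the two boundary layers: since $u\mapsto\sup_{0\le t\le u}|X(t)-X(0)|$ is nondecreasing in $u$, the stated limits provide a single $u_0\in(0,\tfrac12)$ with
\[
\sup_{\varepsilon}P^{(\varepsilon)}\!\Big(\sup_{0\le t\le u_0}|X^{(\varepsilon)}(t)-X^{(\varepsilon)}(0)|>\tfrac{\xi}{4}\Big)<\tfrac{\rho}{3},
\quad
\sup_{\varepsilon}P^{(\varepsilon)}\!\Big(\sup_{1-u_0\le t\le 1}|X^{(\varepsilon)}(t)-X^{(\varepsilon)}(1)|>\tfrac{\xi}{4}\Big)<\tfrac{\rho}{3}.
\]
With this $u_0$ fixed, hypothesis (1) asserts that $\{\pi_{[u_0,1-u_0]}\circ X^{(\varepsilon)}\}_{\varepsilon\in\mathcal{E}}$ is tight in $C([u_0,1-u_0],\mathbb{R}^d)$, so the same modulus-of-continuity criterion, applied on the compact interval $[u_0,1-u_0]$, furnishes $\delta_0>0$ with $\sup_{\varepsilon}P^{(\varepsilon)}(m^{[u_0,1-u_0]}_{\delta}(X^{(\varepsilon)})\ge \tfrac{\xi}{4})<\tfrac{\rho}{3}$ for all $\delta\le\delta_0$, where $m^{[u_0,1-u_0]}_{\delta}$ denotes the modulus taken only over $[u_0,1-u_0]$.

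The decisive step is to glue these estimates. I claim that, off the union of the three exceptional events above, $m_\delta(X^{(\varepsilon)})<\xi$ whenever $\delta\le\min\{u_0,\,1-2u_0,\,\delta_0\}$. Indeed, for $s<t$ with $t-s\le\delta$ I would split into cases according to the position of $s,t$: if both lie in $[u_0,1-u_0]$ the interior bound gives $|X(t)-X(s)|<\xi/4$; if both lie in $[0,u_0]$ or both in $[1-u_0,1]$ the triangle inequality through $X(0)$ or $X(1)$ gives $<\xi/2$; if $s\in[0,u_0]$ and $t\in[u_0,u_0+\delta]$ (the straddling case at the left end), routing through the junction and the endpoint gives $|X(t)-X(s)|\le|X(t)-X(u_0)|+|X(u_0)-X(0)|+|X(0)-X(s)|<\tfrac{\xi}{4}+\tfrac{\xi}{4}+\tfrac{\xi}{4}$, and symmetrically at the right end; finally the case $s\in[0,u_0]$, $t\in[1-u_0,1]$ cannot occur because it forces $t-s\ge 1-2u_0\ge\delta$. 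Consequently $\sup_\varepsilon P^{(\varepsilon)}(m_\delta(X^{(\varepsilon)})\ge\xi)<\rho$ for all sufficiently small $\delta$, and since $\rho>0$ is arbitrary the modulus estimate holds; the tightness criterion then finishes the proof.

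The main obstacle is exactly this gluing analysis. Hypotheses (1) and (2) control the oscillation on the interior and in each boundary layer only separately, whereas a pair $s<t$ with $t-s\le\delta$ may straddle a junction $t=u_0$ (or $t=1-u_0$). The point is to decompose such an increment through both the junction point and the corresponding endpoint so that each of the three resulting pieces is dominated by one of the three exceptional events, which is why the thresholds are chosen at $\xi/4$ and why $\delta$ must be forced below $u_0$ so that a straddling pair cannot reach across to the opposite boundary layer.
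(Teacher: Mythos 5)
Your proof is correct. Note that the paper itself gives no argument for this statement---it is quoted from the appendix of \cite{bib_3DimMoving}---so there is nothing to compare against line by line; but your route (reduce to the Arzel\`a--Ascoli tightness criterion via tightness of the initial values, which follows from uniform integrability and Markov's inequality, plus a uniform modulus-of-continuity bound obtained by choosing the boundary width $u_0$ from hypothesis (2) first and then the mesh $\delta_0$ from the tightness of the restrictions in hypothesis (1)) is the standard and essentially the only natural one, and the delicate straddling cases at the junctions $u_0$ and $1-u_0$ are handled correctly by routing increments through the junction and the endpoint with thresholds $\xi/4$ and by forcing $\delta\leq\min\{u_0,\,1-2u_0,\,\delta_0\}$.
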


\begin{lem}{(Chapter 2, Problem 4.11 in \cite{bib_KarazasShreve})}
\label{Ap_Lem_Moment_estimate_type_suf_cond_for_tightness}
For $\varepsilon \in \mathcal{E}$, $X^{(\varepsilon )}$ is a 
$(C([0, 1], \mathbb{R}^d), \mathcal{B}(C([0, 1], \mathbb{R}^d)))$-valued 
random variable defined on 
$(\Omega^{(\varepsilon)}, \mathcal{F}^{(\varepsilon)}, P^{(\varepsilon)})$. 
Assume that 
$\{X^{(\varepsilon )} \}_{\varepsilon \in \mathcal{E}}$ 
satisfies the following conditions:
\begin{enumerate}[{\rm (1)}]
\item There exists some $\nu > 0$ that satisfies
\begin{align*}
\sup_{\varepsilon \in \mathcal{E}} 
E^{(\varepsilon)} \left[ \left| X^{(\varepsilon )}(0) \right|^{\nu} \right] < \infty.
\end{align*}
\item There exist $\alpha, \beta, C>0$ that satisfy
\begin{align*}
\sup_{\varepsilon \in \mathcal{E}} 
E^{(\varepsilon)} 
\left[ \left| X^{(\varepsilon )}(t) - X^{(\varepsilon )}(s) \right|^{\alpha} \right] 
\leq C \left| t-s \right|^{1+\beta}, 
\qquad t,s \in [0,1].
\end{align*}
\end{enumerate}
Then $\{ X^{(\varepsilon )}\}_{\varepsilon \in \mathcal{E}} $ is tight.
\end{lem}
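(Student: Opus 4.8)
The plan is to reduce the claim to the standard characterization of tightness in $C([0,1],\mathbb R^d)$ (Chapter~2 of \cite{bib_KarazasShreve}): a family is tight if and only if the laws of the initial values $\{X^{(\varepsilon)}(0)\}_{\varepsilon\in\mathcal E}$ are tight in $\mathbb R^d$ \emph{and} the family is uniformly equicontinuous in probability, i.e.\ for every $\eta>0$,
\[
\lim_{\delta\downarrow0}\ \sup_{\varepsilon\in\mathcal E}P^{(\varepsilon)}\left(\sup_{\substack{s,t\in[0,1]\\|t-s|\le\delta}}\left|X^{(\varepsilon)}(t)-X^{(\varepsilon)}(s)\right|>\eta\right)=0 .
\]
Thus the work splits into checking these two conditions, and the whole point is that hypotheses (1) and (2) hold \emph{uniformly} in $\varepsilon$, so that the resulting estimates are uniform as well.

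First I would dispose of the initial values. By Markov's inequality and condition~(1), with $\nu>0$ the exponent appearing there,
\[
\sup_{\varepsilon\in\mathcal E}P^{(\varepsilon)}\left(\left|X^{(\varepsilon)}(0)\right|>\lambda\right)
\le \lambda^{-\nu}\sup_{\varepsilon\in\mathcal E}E^{(\varepsilon)}\left[\left|X^{(\varepsilon)}(0)\right|^{\nu}\right],
\]
and the right-hand side tends to $0$ as $\lambda\to\infty$; hence $\{X^{(\varepsilon)}(0)\}_{\varepsilon\in\mathcal E}$ is tight in $\mathbb R^d$.

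The substantive step is the uniform modulus-of-continuity bound, which I would obtain from condition~(2) by the dyadic chaining argument underlying Kolmogorov's continuity theorem. Fix any $\gamma\in(0,\beta/\alpha)$ and set $M_n^{(\varepsilon)}:=\max_{1\le k\le2^n}|X^{(\varepsilon)}(k/2^n)-X^{(\varepsilon)}((k-1)/2^n)|$. Chebyshev's inequality together with (2) gives
\[
P^{(\varepsilon)}\left(M_n^{(\varepsilon)}\ge 2^{-n\gamma}\right)
\le 2^{n}\cdot 2^{n\gamma\alpha}\cdot C\,2^{-n(1+\beta)}
= C\,2^{-n(\beta-\gamma\alpha)},
\]
and since $\gamma\alpha<\beta$ these probabilities are summable, with the bound independent of $\varepsilon$. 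Writing $A_m^{(\varepsilon)}:=\bigcup_{n\ge m}\{M_n^{(\varepsilon)}\ge2^{-n\gamma}\}$, we obtain $\sup_{\varepsilon}P^{(\varepsilon)}(A_m^{(\varepsilon)})\le\sum_{n\ge m}C\,2^{-n(\beta-\gamma\alpha)}\to0$ as $m\to\infty$. On the complement $(A_m^{(\varepsilon)})^c$ the usual telescoping over dyadic rationals shows that $|X^{(\varepsilon)}(t)-X^{(\varepsilon)}(s)|\le \frac{2}{1-2^{-\gamma}}|t-s|^{\gamma}$ whenever $s,t$ are dyadic with $|t-s|\le 2^{-m}$, and this extends to all $s,t$ because each $X^{(\varepsilon)}$ is continuous by hypothesis. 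Consequently, given $\eta>0$, choosing $m$ so large that $\frac{2}{1-2^{-\gamma}}2^{-m\gamma}<\eta$ forces $\{\,\sup_{|t-s|\le 2^{-m}}|X^{(\varepsilon)}(t)-X^{(\varepsilon)}(s)|>\eta\,\}\subseteq A_m^{(\varepsilon)}$, so the supremum over $\varepsilon$ of its probability is at most $\sup_\varepsilon P^{(\varepsilon)}(A_m^{(\varepsilon)})$, which tends to $0$; since the modulus of continuity is monotone in $\delta$, this yields the displayed uniform equicontinuity.

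Combining the two conditions with the tightness criterion quoted above then gives the assertion. The main obstacle, and the place where care is needed, is the chaining step: one must verify that both the dyadic tail bound $C\,2^{-n(\beta-\gamma\alpha)}$ and the H\"older constant $\frac{2}{1-2^{-\gamma}}$ depend only on $\alpha,\beta,C$ and $\gamma$ and not on $\varepsilon$, so that the supremum over $\varepsilon$ can legitimately be taken; the uniformity in hypothesis (2) is precisely what makes this possible. A minor additional point is the passage from dyadic to arbitrary time points, which uses only the assumed continuity of the sample paths.
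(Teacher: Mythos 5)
Your argument is correct and is precisely the standard solution of Problem 2.4.11 in Karatzas--Shreve, which the paper cites without proof: tightness of the initial laws via Markov's inequality plus uniform equicontinuity in probability via the Kolmogorov--Chentsov dyadic chaining bound, with all constants depending only on $\alpha,\beta,C,\gamma$ so that the supremum over $\varepsilon$ survives. This is also the same chaining mechanism the paper itself deploys explicitly elsewhere (in the proofs of Proposition~\ref{Prop_Limit_LeftRight} and Proposition~\ref{Prop_Holder_conti_Moving}), so your proof is consistent with, and fills in, what the paper takes as known.
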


\begin{lem}[{\cite[Appendix]{bib_3DimMoving}}]\label{Ap_Lem_unif_conti_at_t=0}
Let $\gamma>0$. 
For $\varepsilon \in \mathcal{E}$, $X^{(\varepsilon )}$ is a 
$(C([0, 1], \mathbb{R}^d), \mathcal{B}(C([0, 1], \mathbb{R}^d)))$-valued 
random variable defined on 
$(\Omega^{(\varepsilon)}, \mathcal{F}^{(\varepsilon)}, P^{(\varepsilon)})$. 
Assume that
\begin{align*}
F_l^{\varepsilon} := 
\left\{ \max_{1 \leq k \leq 2^{l-1}} 
\left| X^{(\varepsilon )} \left(\frac{k-1}{2^l} \right) 
- X^{(\varepsilon )} \left( \frac{k}{2^l} \right) \right| 
\geq 2^{-l\gamma} \right\} 
\in \mathcal{F}^{(\varepsilon)},
\qquad \varepsilon \in \mathcal{E}, \quad l=1,2,\ldots
\end{align*}
satisfy $\sum_{l=1}^{\infty} 
\sup_{\varepsilon \in \mathcal{E}} 
P^{(\varepsilon )}(F_l^{\varepsilon}) < \infty$, then we have
\begin{align*}
\lim_{u \downarrow 0} \sup_{\varepsilon \in \mathcal{E}} 
P^{(\varepsilon )} \left( 
\sup_{0 \leq t \leq u} 
\left| X^{(\varepsilon )}(t) - X^{(\varepsilon )}(0) \right| 
>\xi \right) = 0,
\qquad \xi>0.
\end{align*}
\end{lem}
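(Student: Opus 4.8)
The plan is to run the classical Kolmogorov--Chentsov dyadic chaining argument, exactly as in the proof of Proposition~\ref{Prop_Holder_conti_Moving} and in Chapter~2, Theorem~2.8 of \cite{bib_KarazasShreve}, but carried out so that the resulting probability estimate is uniform in $\varepsilon$. First I would set $G_n^{\varepsilon} := \bigcup_{l\geq n} F_l^{\varepsilon}$. The hypothesis $\sum_{l=1}^{\infty}\sup_{\varepsilon}P^{(\varepsilon)}(F_l^{\varepsilon})<\infty$ gives, by subadditivity,
\[
\sup_{\varepsilon\in\mathcal{E}}P^{(\varepsilon)}(G_n^{\varepsilon})
\leq \sum_{l\geq n}\sup_{\varepsilon\in\mathcal{E}}P^{(\varepsilon)}(F_l^{\varepsilon})
\xrightarrow[n\to\infty]{}0,
\]
this being the tail of a convergent series, and the bound is already uniform in $\varepsilon$ by construction.

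Next I would carry out the chaining on the complement $(G_n^{\varepsilon})^c=\bigcap_{l\geq n}(F_l^{\varepsilon})^c$. On this event every dyadic increment $|X^{(\varepsilon)}(\frac{k-1}{2^l})-X^{(\varepsilon)}(\frac{k}{2^l})|$ with $l\geq n$ and $1\leq k\leq 2^{l-1}$ is strictly smaller than $2^{-l\gamma}$; note that these increments exhaust the consecutive dyadic points of $[0,\frac{1}{2}]$, which is exactly the region relevant for continuity at $t=0$. By the standard induction on the dyadic level, this pointwise control upgrades to
\[
\bigl|X^{(\varepsilon)}(t)-X^{(\varepsilon)}(s)\bigr|
\leq 2\sum_{j=n+1}^{\infty}2^{-j\gamma}
=\frac{2}{1-2^{-\gamma}}\,2^{-(n+1)\gamma}=:\delta_n
\]
for all dyadic $s,t\in[0,\frac{1}{2}]$ with $0<t-s<2^{-n}$, and then for all real such $s,t$ by continuity of the sample path $X^{(\varepsilon)}\in C([0,1],\mathbb{R}^d)$. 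Taking $s=0$ yields, on $(G_n^{\varepsilon})^c$, the uniform-in-$\varepsilon$ bound $\sup_{0\leq t\leq 2^{-n}}|X^{(\varepsilon)}(t)-X^{(\varepsilon)}(0)|\leq \delta_n$, where $\delta_n\downarrow 0$.

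Finally I would combine the two estimates. Fix $\xi>0$ and choose $n$ with $\delta_n\leq\xi$. For any $u\leq 2^{-n}$ and any $\varepsilon$, on $(G_n^{\varepsilon})^c$ the supremum $\sup_{0\leq t\leq u}|X^{(\varepsilon)}(t)-X^{(\varepsilon)}(0)|$ is at most $\delta_n\leq\xi$, so that
\[
\Bigl\{\sup_{0\leq t\leq u}|X^{(\varepsilon)}(t)-X^{(\varepsilon)}(0)|>\xi\Bigr\}\subseteq G_n^{\varepsilon}.
\]
Passing to probabilities and taking the supremum over $\varepsilon$ gives
\[
\sup_{\varepsilon\in\mathcal{E}}
P^{(\varepsilon)}\Bigl(\sup_{0\leq t\leq u}|X^{(\varepsilon)}(t)-X^{(\varepsilon)}(0)|>\xi\Bigr)
\leq \sup_{\varepsilon\in\mathcal{E}}P^{(\varepsilon)}(G_n^{\varepsilon}),
\qquad u\leq 2^{-n},
\]
and letting $n\to\infty$, so that $u\downarrow 0$, forces the right-hand side and hence $\limsup_{u\downarrow0}\sup_{\varepsilon}P^{(\varepsilon)}(\cdots)$ to $0$, which is the claim.

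The only genuinely delicate point is the chaining step: one must check that the dyadic induction respects the restricted index range $1\leq k\leq 2^{l-1}$, so the whole argument stays inside $[0,\frac{1}{2}]$ and connects an arbitrary small dyadic $t$ to $0$ using only the controlled increments, and one must confirm that the constant $\delta_n$ is independent of $\varepsilon$. Everything else---subadditivity, the tail-of-series bound, and the final inclusion of events---is uniform in $\varepsilon$ automatically, so no $\varepsilon$-dependence leaks into the estimate.
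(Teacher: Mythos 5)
Your proof is correct and is essentially the intended argument: the paper defers the proof of Lemma~\ref{Ap_Lem_unif_conti_at_t=0} to the appendix of \cite{bib_3DimMoving}, and your scheme --- the uniform tail bound $\sup_{\varepsilon}P^{(\varepsilon)}(G_n^{\varepsilon})\leq\sum_{l\geq n}\sup_{\varepsilon}P^{(\varepsilon)}(F_l^{\varepsilon})$ with $G_n^{\varepsilon}=\bigcup_{l\geq n}F_l^{\varepsilon}$, Kolmogorov--Chentsov chaining on $(G_n^{\varepsilon})^c$ staying inside $[0,\tfrac{1}{2}]$ (where the restriction $1\leq k\leq 2^{l-1}$ is indeed respected, since all intermediate dyadic points lie in $[s,t]\subseteq[0,\tfrac{1}{2}]$), and the event inclusion $\{\sup_{0\leq t\leq u}|X^{(\varepsilon)}(t)-X^{(\varepsilon)}(0)|>\xi\}\subseteq G_n^{\varepsilon}$ --- is exactly the dyadic argument the paper itself runs in the proof of Proposition~\ref{Prop_Holder_conti_Moving}, with the deterministic constant $\delta_n$ making the uniformity in $\varepsilon$ automatic. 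The only cosmetic slip is the boundary case $t=u=2^{-n}$, since the chaining bound is proved for $0<t-s<2^{-n}$ while you allow $u\leq 2^{-n}$; choosing $n$ with $\delta_{n-1}\leq\xi$ (or restricting to $u<2^{-n}$) repairs this without altering anything.
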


\begin{lem}[{\cite[Appendix]{bib_3DimMoving}}]\label{Ap_Lem_unif_conti_at_t=1}
Under the same assumption of 
Lemma~\ref{Ap_Lem_unif_conti_at_t=0}, if 
\begin{align*}
\widetilde{F}_l^{\varepsilon} = 
\left\{ \max_{2^{l-1} \leq k \leq 2^l} 
\left| X^{(\varepsilon )} \left(\frac{k-1}{2^l} \right) 
- X^{(\varepsilon )} \left( \frac{k}{2^l} \right) \right| 
\geq 2^{-l\gamma} \right\}
\in \mathcal{F}^{(\varepsilon)},
\qquad \varepsilon \in \mathcal{E}, \quad l=1,2,\ldots
\end{align*}
satisfy $\sum_{l=1}^{\infty} \sup_{\varepsilon \in \mathcal{E}} 
P^{(\varepsilon )}(\widetilde{F}_l^{\varepsilon}) < \infty$, then we have
\begin{align*}
\lim_{u \downarrow 0} \sup_{\varepsilon \in \mathcal{E}} 
P^{(\varepsilon )} \left( 
\sup_{0 \leq t \leq u} 
\left| X^{(\varepsilon )}(1-t) - X^{(\varepsilon )}(1) \right| >\xi \right) = 0,
\qquad \xi>0.
\end{align*}
\end{lem}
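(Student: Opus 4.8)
The plan is to isolate a single, uniformly small ``bad event'' and to run the standard dyadic chaining estimate on its complement, exactly in the spirit of Proposition~\ref{Prop_Holder_conti_Moving} and Chapter~2, Theorem~2.8 of \cite{bib_KarazasShreve}; the statement is the time-reversed mirror of Lemma~\ref{Ap_Lem_unif_conti_at_t=0}. First I would set $\widetilde{G}_m^{\varepsilon} := \bigcup_{l=m}^{\infty}\widetilde{F}_l^{\varepsilon}$ and use the summability hypothesis to bound its probability by the tail of a convergent series, uniformly in $\varepsilon$:
\[
\sup_{\varepsilon\in\mathcal{E}}P^{(\varepsilon)}\bigl(\widetilde{G}_m^{\varepsilon}\bigr)
\leq \sum_{l=m}^{\infty}\sup_{\varepsilon\in\mathcal{E}}P^{(\varepsilon)}\bigl(\widetilde{F}_l^{\varepsilon}\bigr)
\longrightarrow 0\qquad (m\to\infty).
\]
Hence, given any $\delta>0$, one may fix $m$ so large that $\sup_{\varepsilon\in\mathcal{E}}P^{(\varepsilon)}(\widetilde{G}_m^{\varepsilon})<\delta$.

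The core step is the chaining estimate on the complement $(\widetilde{G}_m^{\varepsilon})^{c}=\bigcap_{l\geq m}(\widetilde{F}_l^{\varepsilon})^{c}$, on which every dyadic increment $|X^{(\varepsilon)}((k-1)/2^l)-X^{(\varepsilon)}(k/2^l)|$ with $l\geq m$ and $2^{l-1}\leq k\leq 2^l$ is strictly less than $2^{-l\gamma}$. For dyadic rationals $s<t$ in $[1/2,1]$ with $2^{-(n+1)}\leq t-s<2^{-n}$ and $n\geq m$, I would telescope $[s,t]$ into dyadic sub-intervals of decreasing length by induction on the scale, obtaining the familiar bound $|X^{(\varepsilon)}(t)-X^{(\varepsilon)}(s)|\leq 2\sum_{j=n+1}^{\infty}2^{-j\gamma}=\frac{2}{1-2^{-\gamma}}\,2^{-(n+1)\gamma}\leq\frac{2}{1-2^{-\gamma}}\,|t-s|^{\gamma}$. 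Since $X^{(\varepsilon)}$ takes values in $C([0,1],\mathbb{R}^d)$, density of the dyadic rationals extends this bound to all $s,t\in[1/2,1]$ with $0<t-s<2^{-m}$.

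With the H\"older-type bound in hand, I would take $t=1$ (a dyadic endpoint) and $s=1-t'$: for $0\leq t'\leq u$ with $u<2^{-m}$, on $(\widetilde{G}_m^{\varepsilon})^{c}$ we have $\sup_{0\leq t'\leq u}|X^{(\varepsilon)}(1-t')-X^{(\varepsilon)}(1)|\leq \frac{2}{1-2^{-\gamma}}\,u^{\gamma}$. Choosing $u_0\leq 2^{-m}$ so small that $\frac{2}{1-2^{-\gamma}}\,u_0^{\gamma}\leq\xi$, the event $\{\sup_{0\leq t'\leq u}|X^{(\varepsilon)}(1-t')-X^{(\varepsilon)}(1)|>\xi\}$ is contained in $\widetilde{G}_m^{\varepsilon}$ for every $u<u_0$ and every $\varepsilon$, so its probability is at most $\sup_{\varepsilon\in\mathcal{E}}P^{(\varepsilon)}(\widetilde{G}_m^{\varepsilon})<\delta$. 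Taking the supremum over $\varepsilon$ and then $u\downarrow 0$ gives $\limsup_{u\downarrow0}\sup_{\varepsilon\in\mathcal{E}}P^{(\varepsilon)}(\cdots>\xi)\leq\delta$, and letting $\delta\downarrow0$ finishes the proof.

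The step I expect to be the main obstacle is the index bookkeeping in the chaining: one must verify that telescoping an interval $[s,t]\subseteq[1/2,1]$ only ever invokes increments over dyadic sub-intervals $[(k-1)/2^l,k/2^l]\subseteq[s,t]\subseteq[1/2,1]$, which force $k-1\geq 2^{l-1}$ and hence $2^{l-1}\leq k\leq 2^l$. This is precisely the index range controlled by $\widetilde{F}_l^{\varepsilon}$ (rather than by the first-half events of Lemma~\ref{Ap_Lem_unif_conti_at_t=0}), and it is what makes the endpoint-at-$1$ statement the genuine mirror image of Lemma~\ref{Ap_Lem_unif_conti_at_t=0}; once this is checked, the remaining estimates are routine.
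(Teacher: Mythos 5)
Your proposal is correct and takes essentially the same approach as the paper's proof (which is deferred to the appendix of \cite{bib_3DimMoving} but is the same dyadic chaining argument the paper itself runs in proving Proposition~\ref{Prop_Holder_conti_Moving}): a uniform tail bound on $\bigcup_{l\geq m}\widetilde{F}_l^{\varepsilon}$ via the summability hypothesis, followed by telescoping over dyadic subintervals of $[\tfrac{1}{2},1]$ and extension by path continuity. Your index bookkeeping is also right: any dyadic interval $[(k-1)/2^l,k/2^l]\subseteq[\tfrac{1}{2},1]$ forces $k-1\geq 2^{l-1}$, which lies inside the range controlled by $\widetilde{F}_l^{\varepsilon}$, so no further remarks are needed.
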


\begin{lem}[{\cite[Appendix]{bib_3DimMoving}}]\label{Ap_Lem_weakconv_product}
Let $S_1$ and $S_2$ be Polish spaces, 
and let $X_n$ and $Y_n$ be random variables 
defined on $(\Omega_n, \mathcal{F}_n, P_n)$ 
that take values in $S_1$ and $S_2$, respectively. 
If $X_n$ and $Y_n$ are independent and 
$P_n \circ X_n^{-1}$ and $P_n \circ Y_n^{-1}$ converge to 
probability measures $Q$ on $S_1$ and $R$ on $S_2$, respectively, 
then $P_n \circ (X_n, Y_n)^{-1}$ 
converges to the product measure $Q \times R$.
\end{lem}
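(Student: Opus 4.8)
The plan is to reduce the statement to a purely measure-theoretic fact about product measures and then argue by tightness together with identification of the subsequential limits. Writing $\mu_n := P_n \circ X_n^{-1}$ and $\nu_n := P_n \circ Y_n^{-1}$, the independence of $X_n$ and $Y_n$ gives immediately that the joint law factorises, $P_n \circ (X_n, Y_n)^{-1} = \mu_n \times \nu_n$. Thus the assertion becomes the following claim, which I would prove: if $\mu_n \Rightarrow Q$ on $S_1$ and $\nu_n \Rightarrow R$ on $S_2$, then $\mu_n \times \nu_n \Rightarrow Q \times R$ on the product Polish space $S_1 \times S_2$.

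First I would establish tightness of the sequence $\{\mu_n \times \nu_n\}$. Since $S_1$ and $S_2$ are Polish, each weakly convergent sequence $\{\mu_n\}$ and $\{\nu_n\}$ is tight by Prokhorov's theorem. Given $\varepsilon > 0$, I would choose compact sets $K_1 \subset S_1$ and $K_2 \subset S_2$ with $\sup_n \mu_n(S_1 \setminus K_1) < \varepsilon/2$ and $\sup_n \nu_n(S_2 \setminus K_2) < \varepsilon/2$. Then $K_1 \times K_2$ is compact in $S_1 \times S_2$ and
\begin{align*}
(\mu_n \times \nu_n)\bigl((S_1 \times S_2) \setminus (K_1 \times K_2)\bigr)
\leq \mu_n(S_1 \setminus K_1) + \nu_n(S_2 \setminus K_2) < \varepsilon
\end{align*}
for every $n$, so $\{\mu_n \times \nu_n\}$ is tight and hence, by Prokhorov's theorem again, relatively compact in the weak topology.

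It then remains to identify every subsequential weak limit as $Q \times R$. Let $\pi$ be the weak limit of some subsequence $\mu_{n_k} \times \nu_{n_k}$. For bounded continuous $f$ on $S_1$ and $g$ on $S_2$, the map $(x,y) \mapsto f(x) g(y)$ is bounded and continuous on $S_1 \times S_2$, and the product structure gives
\begin{align*}
\int f(x) g(y) \, \pi(dx\,dy)
= \lim_{k \to \infty} \Bigl(\int f \, d\mu_{n_k}\Bigr)\Bigl(\int g \, d\nu_{n_k}\Bigr)
= \Bigl(\int f \, dQ\Bigr)\Bigl(\int g \, dR\Bigr).
\end{align*}
To upgrade this agreement on product test functions to the equality $\pi = Q \times R$, I would fix $g$ and let $f$ range over the bounded continuous functions on $S_1$: since $C_b(S_1)$ is measure-determining on the Polish space $S_1$, the displayed identity forces $\int_{A \times S_2} g \, d\pi = \int_{A \times S_2} g \, d(Q \times R)$ for every Borel $A \subset S_1$. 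Fixing such an $A$ and repeating the argument in the second coordinate yields $\pi(A \times B) = Q(A) R(B)$ for all Borel $A, B$. As the measurable rectangles form a $\pi$-system generating $\mathcal{B}(S_1 \times S_2)$, this gives $\pi = Q \times R$. Since every weakly convergent subsequence of the relatively compact sequence $\{\mu_n \times \nu_n\}$ thus has the same limit $Q \times R$, the full sequence converges weakly to $Q \times R$, which is the desired conclusion.

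The main obstacle is precisely this last identification step: agreement of the integrals against the product functions $f \otimes g$ only says that $\pi$ and $Q \times R$ coincide on a separating family, and one must argue that such a family is in fact measure-determining on the product space. The double application of measure-determination, once in each coordinate and each time reducing to the known fact that $C_b$ determines Borel probability measures on a Polish space, together with the $\pi$-system argument on rectangles, is what closes this gap cleanly; the tightness established beforehand is what guarantees that subsequential limits exist at all, so that the identification genuinely pins down the limit of the whole sequence rather than merely of a subsequence.
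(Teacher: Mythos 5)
Your proof is correct and complete. One point of comparison is moot: the paper does not actually prove Lemma~\ref{Ap_Lem_weakconv_product} at all --- it is stated in the appendix with the remark that the proofs of these auxiliary results are found in \cite{bib_3DimMoving}, so there is no in-paper argument to match against, and your self-contained proof is a valid substitute. Your route is the standard one under the Polish hypothesis: factor the joint law as $\mu_n \times \nu_n$ by independence, get uniform tightness of the factors (a convergent sequence is relatively compact, hence tight by the converse half of Prokhorov's theorem, which is exactly where completeness and separability are used), deduce tightness of the products by the union bound on $(K_1 \times K_2)^c$, and pin down every subsequential limit $\pi$ by testing against $f \otimes g$. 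Two details you gloss over are easily repaired and worth making explicit: (i) in the ``fix $g$, vary $f$'' step, the set function $A \mapsto \int_{A \times S_2} g \, d\pi$ is a finite \emph{signed} measure when $g$ changes sign, so either invoke that $C_b$ separates finite signed Borel measures on a metric space, or first take $f \equiv 1$ (resp.\ $g \equiv 1$) to identify the marginals of $\pi$ as $R$ and $Q$ and then replace $g$ by $g + \|g\|_{\infty} \geq 0$; (ii) the claim that rectangles generate $\mathcal{B}(S_1 \times S_2)$ uses $\mathcal{B}(S_1 \times S_2) = \mathcal{B}(S_1) \otimes \mathcal{B}(S_2)$, which holds because $S_1$ and $S_2$ are separable --- on non-separable spaces this fails. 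For comparison, the textbook alternative (Billingsley, \emph{Convergence of Probability Measures}, Theorem~2.8) avoids Prokhorov entirely: on a separable product, rectangles $A \times B$ with $Q(\partial A) = R(\partial B) = 0$ form a convergence-determining class, so $\mu_n \times \nu_n \Rightarrow Q \times R$ follows directly and only separability is needed; your argument trades that approximation lemma for two applications of Prokhorov, which is a perfectly fair exchange given that the lemma assumes Polish spaces anyway.
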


\section*{\rm Acknowledgments}

The authors thank Prof.\ Kumiko Hattori (Tokyo Metropolitan University) 
for helpful comments and discussions on the subject of this paper. 
This research was supported by JSPS KAKENHI Grant Number JP22K01556, 
a grant-in-aid from the Zengin Foundation for Studies on Economics and Finance. 
This research was also supported by JST, the establishment of university
fellowships towards the creation of science technology innovation, Grant Number JPMJFS2139, 
Grant-in-Aid for JSPS Fellows Grant Number JP23KJ1801. 
The authors also thank the reviewer for various comments and constructive suggestions to improve the quality of the paper.

\newpage
\begin{flushleft}
\mbox{  }\\
\hspace{95mm} Kensuke Ishitani\\
\hspace{95mm} Department of Mathematical Sciences\\
\hspace{95mm} Tokyo Metropolitan University\\
\hspace{95mm} Hachioji, Tokyo 192-0397\\
\hspace{95mm} Japan\\
\hspace{95mm} e-mail: k-ishitani@tmu.ac.jp\\
\mbox{  }\\
\hspace{95mm} Tokufuku Rin\\
\hspace{95mm} Kanpo System Solutions\\
\hspace{95mm} Shinagawa, Tokyo 141-0001\\
\hspace{95mm} Japan\\
\mbox{  }\\
\hspace{95mm} Shun Yanashima\\
\hspace{95mm} Department of Mathematical Sciences\\
\hspace{95mm} Tokyo Metropolitan University\\
\hspace{95mm} Hachioji, Tokyo 192-0397\\
\hspace{95mm} Japan\\
\hspace{95mm} e-mail: yanashima-shun@ed.tmu.ac.jp\\
\end{flushleft}

\end{document}